\newtheorem{theorem}{Theorem}[section]
\newtheorem{proposition}[theorem]{Proposition}
\newtheorem{lemma}[theorem]{Lemma}
\newtheorem{question}[theorem]{Question}
\newtheorem{definition}[theorem]{Definition}
\theoremstyle{plain}
\theoremstyle{remark}
\newtheorem{remark}[theorem]{Remark}
\newcommand{\C}{{\mathbb C}}
\newcommand{\Q}{{\mathbb Q}}
\newcommand{\Z}{{\mathbb Z}}
\newcommand{\cC}{{\mathcal C}}
\newcommand{\cA}{{\mathcal A}}
\newcommand{\cI}{{\mathcal I}}
\newcommand{\fq}{\mathfrak q}
\newcommand{\fp}{\mathfrak p}
\newcommand{\Gm}{\mathbb{G}_{\text{m}}}
\newcommand{\Gmn}{\mathbb{G}_{\text{m}}^{n}}
\DeclareMathOperator{\Spec}{Spec}
\DeclareMathOperator{\Aut}{Aut}
\newcommand{\bP}{{\mathbb P}}
\newcommand{\bA}{{\mathbb A}}
\newcommand{\cO}{\mathcal{O}}
\newcommand{\cB}{\mathcal{B}}
\newcommand{\scrO}{\mathscr{O}}
\newcommand{\scrX}{\mathscr{X}}
\newcommand{\scrV}{\mathscr{V}}
\newcommand{\scrP}{\mathscr{P}}
\newcommand{\scrG}{\mathscr{G}}
\newcommand{\scrL}{\mathscr{L}}
\newcommand{\scrM}{\mathscr{M}}
\newcommand{\scrZ}{\mathscr{Z}}
\DeclareMathOperator{\textpp}{pp}
\author{Khoa Nguyen}
\address{
Khoa Nguyen \\
Department of Mathematics\\
University of California\\
Berkeley, CA 94720 
}
\email{khoanguyen2511@gmail.com}
\urladdr{www.math.berkeley.edu/\~{}khoa}
\keywords{dynamical Hasse principle, dynamical Bombieri-Masser-Zannier height bound}
\subjclass[2010]{Primary 11G50, 37P15. Secondary: 14G40.}
\begin{document}
	\title[Diagonally Split Polynomial Maps]{Some Arithmetic Dynamics of Diagonally Split Polynomial Maps}
	\date{8/21/2013 (version 10).}
	\begin{abstract}		 
		Let $n\geq 2$, and let $f$ 
		be a polynomial of degree at least 2 with coefficients in
		a number field or a characteristic 0 function field $K$. We present
		two arithmetic applications of a recent theorem
		of Medvedev-Scanlon to
		the dynamics of the 
		map
		$(f,...,f):\ (\bP^1_{K})^n\longrightarrow (\bP^1_K)^n$, 
		namely
		the dynamical analogues of the Hasse principle and the 
		Bombieri-Masser-Zannier 
		height bound theorem. In particular, we prove that
		the Hasse principle holds when we intersect an orbit and a preperiodic
		subvariety, and that points in the intersection of a 
		curve with the union of all periodic hypersurfaces 
		have bounded heights unless that curve
		is vertical or contained in a periodic 
		hypersurface. 
	\end{abstract}
	\maketitle
	\section{Introduction} \label{common}
	Let $n\geq 2$ be an integer, $K$ a field of characteristic 0,  and $f$ a polynomial of degree 
	$d\geq2$ in $K[X]$. Let $\varphi=(f,\ldots,f)$ be the corresponding self-map of $(\bP^1_K)^n$.
	When $f$ is conjugate to $X^d$ or $\pm C_d(X)$, where $C_d(X)$ is the Chebyshev polynomial of
	degree $d$, the $\varphi$-preperiodic subvarieties of $(\bP^1_K)^n$
	``essentially come from'' the torsion translates of subgroups of 
	the torus $\Gmn$. In a recent work \cite{MedSca}, Medvedev and Scanlon define disintegrated 
	polynomials of degree $d$ to be those that are
	not conjugate to $X^d$ or $\pm C_d(X)$. When $f$ is disintegrated, they can also give
	an explicit description of $\varphi$-periodic subvarieties of $(\bP^1_K)^n$. When $K$ is a number field
	or a function field of a curve, the arithmetic dynamics of $\varphi$ is an interesting
	topic since it is the dynamical analogue of the arithmetic of $\Gmn$ which is an active
	area of research, for examples see \cite{Zan}. This paper presents two applications of the
	Medvedev-Scanlon description to certain (unlikely) intersections involving $\varphi$-preperiodic
	subvarieties of $(\bP^1_K)^n$.
	
	For the rest of this paper, let $K$ be a number field or a function field. Our first application
	is called the (strong) dynamical Hasse principle in \cite{AKNTVV}, as follows. Given
	a projective variety $X$, a self-map $\phi$ of $X$, a closed subvariety $V$, all defined over $K$. 
	Given a $K$-rational point
	$P\in X(K)$ such that the $\phi$-orbit:
	$$\cO_{\phi}(P)\colon=\{P,\phi(P),\ldots\}$$
	does not intersect $V(K)$. Under certain extra conditions, one may ask if there are infinitely many 
	primes $\fp$ of
	$K$ such that the $\fp$-adic closure of $\cO_{\phi}(P)$ does not intersect
	$V(K_\fp)$. This is the same as requiring the orbit of $P$ does not intersect $V$ after taking modulo
	$\fp^m$ for a sufficiently large $m$. This kind of question is first investigated by Hsia and 
	Silverman
	\cite{hsiasil_BM} with motivation from the Brauer-Manin obstruction to the Hasse principle in
	diophantine geometry. We refer the readers to \cite{hsiasil_BM} and the references there for
	more details. As far as we know, all the papers treating the dynamical Hasse principle so far
	either assume that $\dim(V)=0$ \cite{SilVo}, \cite{BGHKST}, 
	or that $\phi$ is \'etale \cite{hsiasil_BM}, \cite{AKNTVV}. By combining results and techniques in
	\cite{SilVo} and \cite{AKNTVV} in addition to the Medvedev-Scanlon theorem, we are able
	to give first examples when $\dim(V)>0$ and $\phi$ is not \'etale:

   \begin{theorem}\label{common1}
   	 Let $f$ be a polynomial of degree at least 2 in $K[X]$,
   	 and $\varphi=(f,...,f):\ (\bP^1_{K})^n\longrightarrow (\bP^1_{K})^n$. Let
   	 $V$ be an absolutely irreducible preperiodic curve or hypersurface 
   	 in $(\bP^1_{K})^n$, and
   	 $P\in (\bP^1)^{n}(K)$ such that the $\varphi$-orbit of $P$ does not
   	 intersect $V(K)$. 
			Then there are infinitely many primes $\fp$ of $K$ such that
			the $\fp$-adic closure of the orbit of $P$ does not intersect $V(K_\fp)$,
			where $K_\fp$ is the $\fp$-adic completion of $K$.   
   \end{theorem}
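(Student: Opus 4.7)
The plan is to first reduce to the case where $V$ is $\varphi$-invariant rather than merely preperiodic. If $\varphi^N(V)=V'$ is $\varphi^k$-periodic for some $N,k\geq 0$, then the tail $\{\varphi^m(P):m\geq N\}$ avoids $V'$, while the head $\{P,\varphi(P),\ldots,\varphi^{N-1}(P)\}$ is a finite set of points not on $V$; any prime $\fp$ of good reduction outside a computable finite exceptional set separates the head points from $V$ modulo $\fp$. Replacing $\varphi$ by $\varphi^k$ and $P$ by $\varphi^N(P)$ I may therefore assume $V$ is absolutely irreducible and $\varphi$-periodic.

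I would then split into cases according to the Ritt / Medvedev--Scanlon trichotomy for $f$. In the \emph{disintegrated} case, \cite{MedSca} classifies every irreducible $\varphi$-periodic subvariety as being cut out by relations of the form $x_j=\pi_j$ (with $\pi_j$ an $f$-preperiodic constant) or $x_j=g_{i,j}(x_i)$ (with $g_{i,j}$ a polynomial commuting with some iterate of $f$). If $V$ is a periodic curve, this forces $V$ to be the parametric image $t\mapsto (h_1(t),\ldots,h_n(t))$, with each $h_k$ either constant or commuting with an iterate of $f$. The condition $\varphi^m(P)\notin V$ for all $m$ then pulls back, using commutation of the $h_k$ with iterates of $f$, to the assertion that the $f$-orbit of a single coordinate of $P$ in $\bP^1$ avoids a \emph{fixed finite} set of points. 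This is exactly the zero-dimensional dynamical Hasse principle established in Silverman--Voloch \cite{SilVo} and refined in \cite{BGHKST}, which delivers infinitely many good primes $\fp$. If $V$ is a periodic hypersurface, the same classification shows that, after permuting coordinates, $V=V_0\times(\bP^1)^{n-2}$ for some periodic curve $V_0\subset(\bP^1)^2$ under $(f,f)$, reducing the problem to the two-coordinate curve case handled above.

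In the case $f$ is conjugate to $X^d$ or $\pm C_d$, the usual conjugacy makes $\varphi$ descend to a multiplication-by-$d$ type map on a torus quotient of $(\bP^1)^n$, and periodic subvarieties are (up to the conjugacy) torsion translates of algebraic subgroups of $\Gmn$. The map is \'etale away from the $\varphi$-invariant divisor $D$ given by the critical fibres ($x_i\in\{0,\infty\}$ or $x_i\in\{\pm 1\}$). If the orbit of $P$ eventually enters $D$, the problem collapses to a lower-dimensional instance and one concludes by induction on $n$; otherwise the orbit lies entirely in the \'etale locus, and the dynamical Hasse principle for \'etale group-variety actions proved in \cite{AKNTVV} applies to the torsion-translate structure of $V$.

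The hardest step will be Case I: extracting from the Medvedev--Scanlon structure a clean presentation of an absolutely irreducible periodic curve as the graph of commuting polynomials over one coordinate, while carefully handling coordinates that collapse to preperiodic constants, and, for hypersurfaces, identifying the essentially two-variable core $V_0$. Guaranteeing \emph{infinitely many} admissible primes (not just one) requires that the density arguments of \cite{SilVo} and \cite{AKNTVV} be transported through the commuting polynomials $h_k$ and through the conjugation to $\Gmn$; this should be tractable because the $h_k$ and the conjugating maps are defined over $K$ and preserve good reduction outside a finite set of primes, but the bookkeeping across the several strata provided by Medvedev--Scanlon is where the real work lies.
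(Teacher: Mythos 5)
Your proposal has two genuine gaps, both in the disintegrated branch, which is where nearly all of the work in the paper actually lies.

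\textbf{The preperiodic-to-periodic reduction does not go through as stated.} You assert that since $\cO_\varphi(P)$ avoids $V$, the tail $\{\varphi^m(P):m\geq N\}$ avoids $V'=\varphi^N(V)$. That inference is false: $V$ and $V'$ are different subvarieties, and an iterate of $P$ can land on $V'$ without any iterate landing on $V$. Moreover, after passing to $\varphi^k$, the $\fp$-adic closure of $\cO_\varphi(P)$ decomposes into $k$ sub-closures $\overline{\cO_{\varphi^k}(\varphi^{N+i}(P))}$ for $0\le i<k$, so one would have to control all of them against the whole cycle $V',\varphi(V'),\dots,\varphi^{k-1}(V')$ simultaneously. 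The paper handles the preperiodic-not-periodic case differently (Step~2 of the proof of Theorem~\ref{intro_thm1}): it uses crucially that $V\cap\varphi^{k+i}(V)$ is a \emph{finite} set of points precisely because $V$ is not periodic, and it first establishes the periodic case for $V_0=\varphi^k(V)$ to force the orbit into $V_0(K)$, after which the free-coordinate of the orbit must avoid a fixed finite set and the zero-dimensional result applies.

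\textbf{The periodic disintegrated case does not reduce to a zero-dimensional finite-set avoidance.} A periodic curve is cut out by $y_j=g_j(x)$ with each $g_j$ commuting with an iterate of $f$. The condition $\varphi^m(P)\notin V$ becomes: for each $m$ there is some $j$ with $f^m(b_j)\neq g_j(f^m(a))$ (equivalently $f^m(b_j)\neq f^m(g_j(a))$). That is a coupled dynamical condition across all coordinates, not ``the $f$-orbit of one coordinate avoids a fixed finite set.'' Indeed, even in the simplest example $V=\{y=x\}$ with $a\neq b$, there is no fixed finite set governing the condition $f^m(a)\neq f^m(b)$. The paper's Step~1.1.3 is therefore unavoidable and is the real engine of the proof: it combines the Ingram--Silverman primitive prime divisor theorem (Lemma~\ref{good_p}) to find infinitely many primes $\fp$ at which the free coordinate $a$ reduces into the cycle of a periodic point $\gamma$ chosen so that $\tilde f'$ is a $\fp$-adic unit along the cycle, then carries out a chain-rule computation (\ref{con3})--(\ref{con15''}) to verify that $\varphi$ is \'etale along the \emph{entire} orbit of $P$ modulo $\fp$ and that $P$ is preperiodic modulo $\fp$; only then can the \'etale Hasse principle of \cite{AKNTVV} (Theorem~\ref{etale}) be invoked. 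Your proposal omits this mechanism and replaces it with a reduction to Silverman--Voloch that is not available here. (Your treatment of the special power/Chebyshev case via descent to $\Gmn$ and \cite{AKNTVV} does match the paper's approach in the proof of Theorem~\ref{intro_thm3}.)

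Smaller omissions: the function-field isotrivial subcase ($\hat h_f(a)=0$ with $a$ wandering, handled in Step~1.1.2) and the case of preperiodic first coordinate (Step~1.1.1) are not visible in your outline, and the hypersurface-to-curve reduction ($V=\pi_{ij}^{-1}(V_0)$) is correct but should be stated precisely so it is clear the remaining coordinates are irrelevant.
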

  
  Our second application should be called the dynamical Bombieri-Masser-Zannier height bound.
  With motivation from the Manin-Mumford conjecture, Lang asks whether a curve in $\Gmn$ that 
  is not a torsion translate of a subgroup has only finitely many torsion points.
  Lang's question is an instance of unlikely intersections as explained in
  \cite{Zan}, and an affirmative answer has been given by Ihara, Serre and Tate independently. 
  In the original paper
  \cite[Theorem 1]{BMZ99}, Bombieri, Masser and Zannier
  proceed further by investigating the question of ``complementary dimensional intersections'', such as 
  the 
  intersection
  of a curve that is not contained in a translate of a subgroup
  with torsion translates of subgroups of codimension one. Recently, a dynamical analogue of
  the Manin-Mumford conjecture and Lang's question has been proposed by Zhang
  \cite{ZhangDist}, and modified by Zhang, Ghioca and Tucker \cite{GTZ}. However, as
  far as we know, a dynamical
  ``complementary dimensional intersection''   
  analogue of the Bombieri-Masser-Zannier theorem has not been treated elsewhere.
  By applying the Medvedev-Scanlon theorem and basic (canonical) height arguments, this paper is the first to establish such a dynamical analogue:
  	\begin{theorem}\label{common2}
  		Let $f$, and $\varphi$ be as in Theorem \ref{common1}. 
  		Assume that $f$ is disintegrated. 
 Let $C$ be an irreducible curve in
  		$(\bP^1_{\bar{K}})^n$ that is not contained in any periodic
  		hypersurface. Assume that  $C$ 
  		maps surjectively 
  		onto each factor $\bP^1$ of $(\bP^1)^n$. Then points in
  		$$\bigcup_{V} (C(\bar{K})\cap V(\bar{K}))$$
  		have bounded heights, where $V$ ranges over all
  		periodic hypersurfaces of $(\bP^1_{\bar{K}})^n$.
  	\end{theorem}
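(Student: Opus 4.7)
\medskip

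\noindent\textbf{Proof plan.}
My plan is to combine the Medvedev--Scanlon classification of $\varphi$-periodic hypersurfaces with canonical-height estimates on $C$. By Medvedev--Scanlon applied to the disintegrated polynomial $f$, every irreducible $\varphi$-periodic hypersurface $V\subset(\bP^1_{\bar K})^n$ has one of two forms: either (a) $V=\{x_i=a\}$ for some index $i$ and some $f$-periodic point $a\in\bP^1$, or (b) $V=\{x_j=g(x_i)\}$ for some $i\neq j$ and some polynomial $g\in\bar K[X]$ commuting with an iterate $f^k$ of $f$. Using the commutation $g\circ f^{kN}=f^{kN}\circ g$ and the standard comparison $h=\hat h_f+O(1)$, a telescoping argument yields $\hat h_f(g(y))=\deg(g)\,\hat h_f(y)$ for every $y\in\bar K$, which will govern case (b).

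Next I set up heights on $C$. Let $\pi_i:C\to\bP^1$ be the $i$-th projection; by the surjectivity hypothesis $e_i:=\deg\pi_i>0$. Fix an ample divisor $H$ on $C$ of degree $1$ with height $h_H$. Since $\pi_i^*\cO(1)$ has degree $e_i$ and heights of degree-zero divisors on $C$ are $O(\sqrt{h_H}+1)$ (by N\'eron--Tate theory on $\Jac(C)$), for every $P=(P_1,\ldots,P_n)\in C(\bar K)$ one has
$$\hat h_f(P_i)=e_i\,h_H(P)+O\!\left(\sqrt{h_H(P)}+1\right),\qquad e_j\hat h_f(P_i)-e_i\hat h_f(P_j)=O\!\left(\sqrt{h_H(P)}+1\right),$$
with implied constants depending only on $f$, $C$, $H$, and not on $V$.

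For $P\in C(\bar K)\cap V(\bar K)$ I then argue as follows. In case (a), $\hat h_f(P_i)=0$ immediately forces $h_H(P)=O(1)$. In case (b), substituting $\hat h_f(P_j)=\deg(g)\,\hat h_f(P_i)$ into the displayed relation yields
$$\bigl(e_j-e_i\deg(g)\bigr)\hat h_f(P_i)=O\!\left(\sqrt{h_H(P)}+1\right);$$
if $\deg(g)\neq e_j/e_i$, the nonzero integer coefficient has absolute value at least $1$, so $\hat h_f(P_i)=O(\sqrt{h_H(P)}+1)$, which combined with the first displayed relation in the preceding paragraph forces $h_H(P)=O(1)$ uniformly in $V$. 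Thus every $P$ from these settled cases lies in a set of uniformly bounded height on $C$.

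The hard part will be the \emph{resonant} subcase $\deg(g)=e_j/e_i$ of case (b), where the linear relation becomes trivial and the height estimate degenerates. To handle it I would show that only finitely many such $V$ arise, so that each contributes only finitely many points to the union. By Ritt's theorem for disintegrated polynomials, any $g$ commuting with $f^k$ satisfies $g=F^a$ and $f^k=F^b$ for a common polynomial $F$ and positive integers $a,b$. The equation $(\deg F)^b=d^k$ together with the finiteness of iterative roots of any fixed polynomial leave only finitely many admissible $F$ as $k$ varies; the fixed-degree constraint $\deg(g)=e_j/e_i$ then pins down only finitely many pairs $(F,a)$. Hence for each ordered pair $(i,j)$ only finitely many resonant $V$ arise; since $C$ is not contained in any periodic hypersurface, each intersection $C\cap V$ is finite, so their finite union has bounded height. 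Combining this with the earlier cases yields the uniform height bound claimed by the theorem.
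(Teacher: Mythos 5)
Your proposal reproduces the paper's argument in all essentials: the Medvedev--Scanlon reduction to hypersurfaces of type $x_i=a$ or $x_j=g(x_i)$, the observation that $\hat h_f(g(y))=\deg(g)\hat h_f(y)$ (Lemma \ref{same_cano}), the Mumford/N\'eron--Tate estimate that heights of degree-zero divisors on $C$ are $O(\sqrt{h_H}+1)$ (the paper cites \cite[Theorem B.5.9]{HS}, which is the same fact), and the final reduction to finitely many $g$. The only structural variation is the case split: the paper splits at a degree threshold $\deg(g)>2M/N$, while you split at the single ``resonant'' degree $e_j/e_i$. Both work, and yours is arguably cleaner since the coefficient $|e_j-e_i\deg(g)|\geq 1$ uniformly off the resonant locus.

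However, your handling of the resonant case has a technical inaccuracy you should repair. You assert that ``by Ritt's theorem, any $g$ commuting with $f^k$ satisfies $g=F^a$ and $f^k=F^b$ for a common polynomial $F$,'' and then argue via finiteness of iterative roots. This is not what Ritt's theorem gives (and the ``as $k$ varies'' finiteness claim as stated is circular, since $f^k$ is not fixed). The actual structure, which is Proposition \ref{prop:all_g}(d) of the paper, is that every polynomial $g$ of degree $\geq 2$ commuting with some iterate of $f$ has the form $g=\tilde f^m\circ L$, where $\tilde f$ is the fixed minimal-degree polynomial commuting with an iterate of $f$ and $L$ ranges over the finite cyclic group $M(f^\infty)$. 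In particular $g$ need not be an iterate of any single $F$: the linear twist $L$ may be nontrivial. The finiteness you need is nonetheless immediate from that description --- for a fixed degree $e_j/e_i$ there is at most one admissible $m$ and only $|M(f^\infty)|$ choices of $L$ --- so the conclusion stands, but you should cite Proposition \ref{prop:all_g}(d) rather than the misremembered Ritt statement.
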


  The above two theorems are examples of the main results and topics treated in this paper. 
  We refer the readers to Theorems \ref{intro_thm3}, \ref{intro_thm1},
  \ref{intro_thm1*}, \ref{dynBMZ_1}, \ref{thm:C_cap_V}, \ref{thm:H_cap_V}, and 
  \ref{thm:finalsplit}
  for much more general results. This paper is the result of a reorganization
  and slight expansion
  of the paper \cite{sister}. We are grateful to an anonymous comment that 
  the function field case might not be necessary for the dynamical Hasse principle. If the 
  constant ground field $\kappa$ of $K$ is uncountable, then for all but countably many
  places $\fp$ of $K$, the orbit $\cO_{\varphi}(P)$ is discrete 
  in the $\fp$-adic topology. It has been suggested to the author that 
  even when $\kappa$ is countable, although it is not completely trivial, it is still likely that
  the above orbit remains discrete in the $\fp$-adic topology for infinitely many primes $\fp$ of $K$.
  Since we do not know a proof of this result, and since, more importantly, the techniques
  in this paper work equally well for the function field case with just little extra effort,
  we decide to keep it.
   
  The organization of this paper is as follows. First, we present the Medvedev-Scanlon description
  of $\varphi$-preperiodic subvarieties of $(\bP^1_K)^n$
  in a way most suitable for our applications. Then we use these results to investigate 
  the dynamical Hasse principle and the dynamical Bombieri-Masser-Zannier height bound theorem. 
  We finish
  this section by stating our convention for notation. 
  A function field means a finitely generated field
  of transcendental degree 1 over a ground 
  field of characteristic 0. Throughout
  this paper, $K$ denotes a number field or a function field over the ground field $\kappa$, and 
  $M_K$
  denotes the set of places of $K$. In the function field case,
  by places of $K$, we mean the equivalence classes of the valuations
  on $K$ that are trivial on $\kappa$. We assume that $\kappa$ is relatively 
  algebraically 
  closed in $K$, or equivalently, $\kappa^{*}$ is exactly the elements of $K^{*}$
  having valuation 0 at every place. This assumption
  will not affect the generality of our results. 
  For every $v$ in $M_K$, 
  let $K_v$ denote the completion of $K$ with respect to $v$. If $v$
  is non-archimedean, we also let $\scrO_v$ and $k_v$ respectively denote
  the valuation ring and the residue field of $K_v$. By a variety over $K$,
  we mean a reduced separated scheme of finite type over $K$. Every Zariski
  closed subset of a variety is identified with the corresponding induced
  reduced closed subscheme structure, and is called a closed subvariety.
  Curves, surfaces,\ldots, and hypersurfaces are not assumed
  to be irreducible but merely \textit{equidimensional}.
  In this paper, $\bP^1_{K}$
  is implicitly equipped with a coordinate function $x$ having
  only one simple pole and zero which are denoted by $\infty$ and $0$ 
  respectively. Every polynomial $f\in K[X]$ gives a corresponding self-map
  of $\bP^1_{K}$, and a self-map of $\bA^1_{K}=\bP^1_{K}-\{\infty\}$ 
  by its action on $x$. For every self-map $\mu$ of a set,
  for every positive integer $n$, we write $\mu^n$ to denote the
  $n^{\text{th}}$ iterate of $\mu$, and we define $\mu^0$ to be
  the identity map. The phrase ``for almost all'' means ``for all but
  finitely many''.
  
  {\bf Acknowledgments.}  The author would especially like to thank Tom Tucker who gave important ideas as well as constant encouragement to the author.
We are grateful to Tom Scanlon for numerous helpful conversations, 
explanations and advice. We would also like to thank Alice Medvedev, Paul Vojta, Xinyi Yuan and Mike Zieve
for helpful conversations. We wish to express our gratitude to
anonymous comments made
to the paper \cite{sister}.
The author was partially supported by NSF Grant
DMS-0901149.
     
  \section{The Medvedev-Scanlon Theorem}\label{MSTheorem}
				Throughout this section, let $F$ be an algebraically closed
				field of characteristic 0, and $n\geq 2$ 
				a positive integer. We now introduce the notion of disintegrated polynomials.
			For $d\geq2$,
				the Chebyshev polynomial of degree $d$ 
				is the unique polynomial 
				$C_d\in F[X]$ such that $\displaystyle 
				C_d(X+\frac{1}{X})=X^d+\frac{1}{X^d}$.
				\begin{definition}\label{disintegrated}
				Let $f\in F[X]$ be a polynomial of 
				degree $d\geq 2$. Then $f$ is said to be special
				if there is $L\in \Aut(\bP^1(F))$ such that
				$L^{-1}\circ f\circ L$ is either $\pm C_d$ or
				the power monomial $X^d$. The polynomial $f$ is
				said to be disintegrated if it is not special.
				 \end{definition}
			Here we have adopted the terminology ``disintegrated polynomials'' used in the Medvedev-Scanlon work \cite{MedSca}
			which has its origin from model theory. 
			Unfortunately, there is no standard terminology for what we call special 
			polynomials. 
			Complex
			dynamists describe such maps as having ``flat orbifold metric'', Milnor 
			\cite{Milnor}
			calls them ``finite quotients of affine maps'', and Silverman's book
			\cite{Sil-ArithDS}
			describes them as polynomials ``associated to algebraic groups''.
			The term ``special'' used here is succinct and sufficient for our purposes.
			We remark that for every $m>0$, $f^m$ is disintegrated
			if and only $f$ is disintegrated. To prove this, we may assume
			$F=\C$ by the Lefschetz principle and use the fact that a polynomial
			 is disintegrated if and only if its Julia set
			 is not an interval or a circle.
				We have
				the following theorem of Medvedev-Scanlon \cite[p. 5]{MedSca} which is a 
				crucial ingredient in our paper:

				 \begin{theorem}\label{inv_curves}
				 	Let $f\in F[X]$ be a disintegrated polynomial
				 	of degree $d\geq 2$, let $n\geq2$ and let 
				 	$\displaystyle \varphi=(f,...,f):\ (\bP^1_F)^n\longrightarrow
				 	(\bP^1_F)^n$. Let $V$ be an irreducible $\varphi$-invariant 
				 	(respectively $\varphi$-periodic) 
				 	subvariety in 
				 	$(\bP^1_F)^n$. 
				 	For $1\leq i\leq n$, let
				 	$x_i$ be the chosen coordinate for the $i^{\text{th}}$ factor 
				 	of $(\bP^1)^n$. 
				 	Then
				 	$V$ is given by a collection of equations of the following types: 
				 		\begin{itemize}
				 			\item [(A)] $x_i=\zeta$ where $\zeta$
				 			is a fixed (respectively periodic) point of $f$.
				 			\item [(B)] $x_j=g(x_i)$ for some $i\neq j$, 
				 			where $g$ is a polynomial commuting with $f$ (respectively 
				 			an iterate of 
				 			$f$). 
				 		\end{itemize}
				 \end{theorem}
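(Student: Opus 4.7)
The plan is to prove the theorem by induction on $n$, after first reducing the periodic case to the invariant case. Indeed, if $V$ is $\varphi$-periodic of period $m$, then $V$ is $\varphi^m$-invariant and $\varphi^m = (f^m,\ldots,f^m)$; since $f^m$ is disintegrated whenever $f$ is, the invariant version applied to $f^m$ produces equations $x_i = \zeta$ with $\zeta$ an $f^m$-fixed (hence $f$-periodic) point and $x_j = g(x_i)$ with $g$ commuting with an iterate of $f$, matching the periodic statement exactly.

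The base case $n = 1$ is immediate, since an irreducible proper $\varphi$-invariant subvariety of $\bP^1_F$ is just a fixed point of $f$. For the induction, let $V \subseteq (\bP^1_F)^n$ be irreducible and $\varphi$-invariant, and consider each coordinate projection $\pi_i\colon V \to \bP^1_F$. If some $\pi_i$ is non-dominant, its image is a single point $\zeta$; invariance forces $f(\zeta) = \zeta$, giving one equation of type~(A), and $V$ sits inside the invariant slice $\{x_i = \zeta\}$ on which the remaining coordinates carry the diagonal action of $f$ on $(\bP^1_F)^{n-1}$, so the induction hypothesis applies. Assuming instead that every $\pi_i$ is dominant, look at the pairwise projections $\pi_{ij}$ of $V$ into $(\bP^1_F)^2$: each image is an irreducible $(f,f)$-invariant subvariety, and when it is a proper curve the $n=2$ case supplies an equation of type~(B) between $x_i$ and $x_j$. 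The collection of these relations, running over all pairs $(i,j)$, then cuts out $V$.

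The crux is therefore the case $n = 2$: an irreducible $(f,f)$-invariant curve $C \subseteq (\bP^1_F)^2$ both of whose projections are dominant must be the graph of a polynomial $g$ commuting with $f$, or such a graph with the roles of the variables reversed. A priori $C$ is only a correspondence from $\bP^1_F$ to itself, and the delicate point is to show that one of the two projections is actually birational, so that $C$ is the graph of a function; once this is done, the inclusion $(f,f)(C) \subseteq C$ translates directly into the commutation $f\circ g = g\circ f$. This rigidity is precisely where the disintegratedness hypothesis enters: in the special cases $f \sim X^d$ or $f \sim \pm C_d$, extra invariant correspondences come from the group structure on $\Gm$ and the Chebyshev quotient $\Gm/\{\pm 1\}$, and the disintegrated hypothesis is needed exactly to rule these out. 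I expect this to be the main obstacle, and I would execute it by following the model-theoretic approach of~\cite{MedSca} via strongly minimal sets in the theory of difference fields together with Zilber's trichotomy, or alternatively by direct appeal to the classical Ritt classification of commuting polynomial semigroups.
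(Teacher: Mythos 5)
This theorem is not proved in the paper at all; it is quoted verbatim from Medvedev--Scanlon \cite[p.~5]{MedSca} and used as a black box. So there is no internal proof to compare against, and the question is whether your sketch actually constitutes a proof. It does not: the two load-bearing steps are asserted rather than argued.

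The first and more serious gap is the sentence ``The collection of these relations, running over all pairs $(i,j)$, then cuts out $V$.'' This is not a formal consequence of the $n=2$ case; it is the crux of the entire theorem. When every projection $\pi_i$ is dominant, all you obtain from the pairwise images is a closed set $W = \bigcap_{i,j}\pi_{ij}^{-1}(\pi_{ij}(V)) \supseteq V$, and you have given no reason why $V = W$, nor even why $W$ should be a union of things of the required shape. Indeed this is exactly where disintegratedness must be used and where the special polynomials break: for $f(X)=X^d$ the hypersurface $\{x_1x_2x_3=1\}\subseteq\Gm^3\subseteq(\bP^1)^3$ is irreducible and $(f,f,f)$-invariant, all three coordinate projections are dominant, and all three pairwise projections are \emph{onto} $(\bP^1)^2$, so the pairwise method produces no equation at all. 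In the Medvedev--Scanlon proof, the statement that binary relations suffice is precisely the ``trivial/disintegrated geometry'' output of Zilber's trichotomy applied to the generic type of $\sigma(x)=f(x)$ in ACFA, and it is obtained before and independently of the description of what the binary relations are. Deferring only the $n=2$ case to \cite{MedSca} while treating the reduction to pairs as formal inverts the actual logical structure.

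The second gap is the $n=2$ case itself. You correctly identify it as nontrivial and as the place where one must rule out extra correspondences coming from $\Gm$ or the Chebyshev quotient, but you then propose to ``execute it by following the model-theoretic approach of \cite{MedSca} \ldots or alternatively by direct appeal to \ldots Ritt.'' Citing the very theorem under discussion (or its engine) is not a proof. A minor further point: even granting both steps, you would still owe an argument that the pairwise relations you collect are mutually consistent and that, after choosing representatives, they jointly define an irreducible variety equal to $V$ (this is essentially the content of Lemma~\ref{become_chain} and Proposition~\ref{product_of_curves} in the paper, which are themselves \emph{derived from} Theorem~\ref{inv_curves}, not available before it).

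The parts of your sketch that are correct and genuinely cheap are: the reduction from ``$\varphi$-periodic'' to ``$\varphi$-invariant'' by passing to $\varphi^m$ and using that $f^m$ is disintegrated, and the peeling off of non-dominant coordinates $x_i=\zeta$ with $f(\zeta)=\zeta$. Everything else needs the Medvedev--Scanlon machinery.
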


				We could further describe all the polynomials $g$ in type (B)
				of Theorem \ref{inv_curves} as follows. 
				\begin{proposition}\label{prop:all_g}
					Let $F$ and $f$ be as in Theorem \ref{inv_curves}. We have:
					\begin{itemize}
						\item [(a)] If $g\in F[X]$ has degree at least 2 such that 
						$g$
						commutes with an iterate of $f$ then $g$ and $f$
						have a common iterate.
						\item [(b)] Let $M(f^\infty)$
						denote the collection of all linear polynomials
						commuting with an iterate of $f$. Then $M(f^\infty)$
						is a finite cyclic group under composition.
						\item [(c)] Let $\tilde{f}\in F[X]$ be a polynomial
						of lowest degree at least 2 such that $\tilde{f}$
						commutes with an iterate of $f$. Then there exists
						$D=D_f>0$ relatively prime to
						the order of $M(f^\infty)$ such that 
						$\tilde{f}\circ L=L^D\circ \tilde{f}$
						for every $L\in M(f^\infty)$.
						\item [(d)] 
						$\left\{\tilde{f}^m\circ L:\ m\geq 0,\ L\in M(f^\infty)\right\}=\left\{L\circ\tilde{f}^m:\ m\geq 0,\ L\in M(f^\infty)\right\}$, and these sets describe exactly all
						polynomials $g$ commuting with an iterate of $f$.		
					\end{itemize}
				\end{proposition}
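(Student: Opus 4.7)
The plan is to establish (a)--(d) in order, with Ritt's classical theorems on commuting polynomials and composition factorizations as the main inputs.

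For (a), I would apply Ritt's theorem on commuting polynomials to the pair $(g, f^k)$, where $k$ is chosen so that $g\circ f^k = f^k\circ g$. Ritt's trichotomy says that (after a common linear conjugation) either both are power monomials, both are $\pm C_n$, or they share a common iterate. Since $f$ is disintegrated, so is $f^k$ (as noted in the discussion preceding Theorem \ref{inv_curves}), so the first two alternatives are ruled out. Hence $g^a = f^{kb}$ for some $a, b\geq 1$.

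For (b), closure of $M(f^\infty)$ under composition and inversion is automatic: if $L_i$ commutes with $f^{k_i}$ for $i=1,2$, then $L_1\circ L_2^{\pm 1}$ commutes with $f^{\operatorname{lcm}(k_1,k_2)}$. Conjugating $f$ once and for all so that it is monic with vanishing $X^{d-1}$ coefficient, every iterate $f^k$ has the same form. Writing a prospective $L = \alpha X + \beta \in M(f^\infty)$ and comparing the leading and next-to-leading coefficients of $L\circ f^k$ with those of $f^k\circ L$ forces $\beta = 0$ (using characteristic $0$), so every element of $M(f^\infty)$ is multiplication by a root of unity. For finiteness I would invoke the Lefschetz principle to reduce to $F = \bC$, and then observe that each such $L$ is an affine symmetry of the Julia set $J(f) = J(f^k)$; since $f$ is disintegrated, $J(f)$ is neither a circle nor a line segment, so its group of affine symmetries is finite. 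A finite subgroup of $F^\times$ is cyclic.

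For (c), fix $L\in M(f^\infty)$. The composite $\tilde f\circ L$ has the same degree as $\tilde f$ and, by the group property in (b), commutes with a common iterate of $f$. Applying (a) to $\tilde f\circ L$ yields a shared iterate with $\tilde f$; combined with the minimality of $\deg\tilde f$, a Ritt-type composition-factorization argument produces a unique $L'\in M(f^\infty)$ with $\tilde f\circ L = L'\circ\tilde f$. The assignment $\sigma: L\mapsto L'$ is a group endomorphism of $M(f^\infty)$, and its kernel must be trivial, for a nontrivial $L\in\ker\sigma$ satisfies $\tilde f(L(X))=\tilde f(X)$ and would exhibit $\tilde f$ as a composition $h\circ u$ with $h$ of strictly smaller degree still lying in $\bigcup_k E(f^k)$, contradicting minimality. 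Hence $\sigma$ is an automorphism of the finite cyclic group $M(f^\infty)$, and is therefore of the form $L\mapsto L^{D_f}$ for a unique $D_f$ coprime to $\lvert M(f^\infty)\rvert$, depending only on $f$.

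For (d), iterating the identity of (c) gives $\tilde f^m\circ L = L^{D_f^m}\circ\tilde f^m$, and because $L\mapsto L^{D_f^m}$ is a bijection on $M(f^\infty)$, the two sets $\{\tilde f^m\circ L\}$ and $\{L\circ\tilde f^m\}$ coincide. For the description of all commuting $g$: if $\deg g=1$ then $g\in M(f^\infty)$, giving the representation with $m=0$; if $\deg g\geq 2$, then applying (a) to both $(g,f)$ and $(\tilde f,f)$ produces a common iterate $g^M=\tilde f^N$, matching degrees forces $\deg g=(\deg\tilde f)^t$ for some $t\geq 1$, and one final Ritt factorization combined with the minimality of $\tilde f$ yields $g=L\circ\tilde f^t$ for some $L\in M(f^\infty)$. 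The hard part throughout is precisely the Ritt-type unique-factorization step---especially in (c), where the claim that $\tilde f\circ L$ factors through $\tilde f$ on the right, and the triviality of $\ker\sigma$, both rest delicately on the minimality of $\deg\tilde f$ together with Ritt's classical theorems on compositional decompositions of polynomials.
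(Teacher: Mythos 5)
Your overall route is genuinely different from the paper's, and in fact the paper's remark after the proposition anticipates it: the result ``follows readily from Ritt's theory of polynomial decomposition,'' but the paper deliberately avoids that route. For (b)--(c) the paper leans entirely on Schmidt--Steinmetz (cited as \cite{SchSte}): the group $\Sigma_f$ of linear fractional automorphisms of the Julia set is finite cyclic, $M(f^\infty)\subseteq\Sigma_f=\Sigma_{\tilde f}$ inherits this, and the relation $\tilde f\circ L=L^D\circ\tilde f$ for \emph{all} $L\in\Sigma_{\tilde f}$ comes straight out of that reference, leaving only a short computation (with a generator $\tilde L$ and an iterate commuting with it) to show $D$ is coprime to $|M(f^\infty)|$. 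For (d) the paper replaces Ritt's decomposition machinery by the Medvedev--Scanlon description itself: the curve $\tilde f(y)=g(x)$ in $(\bP^1)^2$ is eventually $(f,f)$-invariant, hence has a periodic irreducible component of the form $y=\psi(x)$ or $x=\psi(y)$, and reading off degrees gives $\tilde f\circ\psi=g$ or $g\circ\psi=\tilde f$, from which one bootstraps by minimality. The payoff of this device, stated in the paper's remark, is that it transfers to rational non-polynomial maps where Ritt's polynomial decomposition theory is unavailable.

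Your sketch, however, has two real gaps precisely at the ``hard parts'' you flag. In (c), the claim that $\tilde f\circ L=L'\circ\tilde f$ for some linear $L'$ is the crux of the statement; it does not follow from (a), (b), and minimality alone by a ``routine Ritt factorization,'' and you do not supply the argument --- this is exactly the content the paper imports from \cite{SchSte}. In (d), the step ``matching degrees forces $\deg g=(\deg\tilde f)^t$'' is false as stated: a common iterate $g^M=\tilde f^N$ only gives $(\deg g)^M=(\deg\tilde f)^N$, and that does not force $\deg g$ to be an integer power of $\deg\tilde f$ (for instance $8^2=4^3$); to rule out such degree patterns you already need the unique-chain structure of the commuting semigroup, so the subsequent ``final Ritt factorization'' is circular as written. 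A Ritt-based proof does exist, but it requires invoking the decomposition theory in substantially more detail than your sketch provides.
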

				\begin{proof}
					By the Lefschetz principle, we may assume $F=\C$. 
					Part (a) is a well-known result of Ritt \cite[p. 399]{Ritt}. 
					For part (b), let $\Sigma_f$ denote the group
					of linear fractional automorphism of the Julia set of $f$.
					It is known that $\Sigma_f$ is finite cyclic \cite{SchSte}.
					Therefore $M(f^\infty)$, being a subgroup of $\Sigma_f$,
					is also finite cyclic. By part (a), $f$ and $\tilde{f}$
					have the same Julia set. Therefore 
					$\Sigma_f=\Sigma_{\tilde{f}}$. By \cite{SchSte},
					there exists $D$ such that $\tilde{f}\circ L=L^{D}\circ \tilde{f}$ for every $L\in\Sigma_f=\Sigma_{\tilde{f}}$. To prove that
					$D$ is relatively prime to the order of $M(f^\infty)$,
					we let $\tilde{L}$ denote a generator of $M(f^\infty)$,
					and $N>0$ such that $\tilde{L}\circ \tilde{f}^N=\tilde{f}^N\circ L$. Hence $\tilde{L}\circ \tilde{f}^N=\tilde{L}^{D^N}\circ \tilde{f}^N$. The last equality
					implies $D^{N}-1$ is divisible by the order
					of $M(f^\infty)$ and we are done.

					It remains to show part (d). The given two sets are equal since
					$D^m$ is relatively prime to the order of $M(f^\infty)$
					for every $m\geq 0$. It suffices to show if $g\in F[X]$,
					$\deg(g)>1$ and $g$ commutes with $f$ then $g$ has
					the form $\tilde{f}^m\circ L$. Let $\varphi=(f,f)$
					be the split self-map of $(\bP^1_F)^2$. Now the
					(possibly reducible) curve $V$
					in $(\bP^1_F)^2$ given by $\tilde{f}(y)=g(x)$
					satisfies
					$\varphi^{M}(V)\subseteq V$ for some $M>0$. Therefore
					some irreducible component $C$ of $V$ is periodic.
					By Theorem \ref{inv_curves}, $C$ is given by 
					$y=\psi(x)$ or $x=\psi(y)$ where
					$\psi$ commutes with an iterate of $f$. Therefore one
					of the following holds:
					\begin{itemize}
						\item [(i)] $\tilde{f}\circ \psi=g$
						\item [(ii)] $g\circ \psi =\tilde{f}$
					\end{itemize}
					Since $\deg(g)\geq \deg(\tilde{f})$ by the definition of 
					$\tilde{f}$, case (ii) can only happen when 
					$\deg(g)=\deg(\tilde{f})$
					and $\psi\in M(f^\infty)$. If this is the case, we can write
					(ii) into $g=\tilde{f}\circ (\psi)^{-1}$. Thus
					we can assume (i) always happens. Repeating the argument 
					for the pair $(\tilde{f},\psi)$
					instead of $(\tilde{f},g)$,
					we get the desired conclusion.
				\end{proof}
				
				\begin{remark}
					Proposition \ref{prop:all_g} follows readily from Ritt's theory 
					of
					polynomial decomposition. The proof
					given here uses the Medvedev-Scanlon description in Theorem
					\ref{inv_curves} and simple results from complex dynamics.
					In fact, in an upcoming work, we will study and give
					examples of a lot of rational (and non-polynomial) maps
					$f$ such that Theorem \ref{inv_curves} is still valid. Then 
					an analogue of Proposition \ref{prop:all_g},
					especially part (d), still holds by exactly the same proof.   
				\end{remark}

				We conclude this section with a particularly useful property
				of preperiodic subvarieties of
				$(\bP^1_F)^n$. Let $f$, $n$ and $\varphi$ be as in Theorem \ref{inv_curves}. Let
				$V$ be an irreducible $\varphi$-periodic subvariety of $(\bP^1_F)^n$.
				We will associate to $V$ a binary relation $\prec$ on $I=\{1,\ldots,n\}$
				as follows. Let $I_V$ denote the set of $1\leq i\leq n$
				such that $V$ is contained in  a hypersurface of the form $x_i=\zeta$
				where $\zeta$ is a periodic point. The relation $\prec$ is empty if and only
				if $I_V=I$ (i.e. $V$ is a point). For every $i\in I-I_V$,
				we include the relation $i\prec i$.  
				For two elements $i\neq j$ in $I-I_V$, we include the relation
				$i\prec j$ if $V$ is contained in a hypersurface of the form
				$x_j=g(x_i)$ where $g$ is a polynomial commuting with an iterate of 
				$f$. We have the following properties: 
				\begin{lemma}\label{become_chain}
					Notations as in the last paragraph. Let $1\leq i,j,k\leq n$. We have:
					\begin{itemize}
						\item [(a)] Transitivity: if $i\prec j$ and $j\prec k$ then
						$i\prec k$.
						\item [(b)] Upper chain extension: if $i\prec j$ and $i\prec k$ then either
						$j\prec k$ or $k\prec j$.
						\item [(c)] Lower chain extension: if $i\prec k$ and $j\prec k$ then either
						$i\prec j$ or $j\prec i$.
					\end{itemize}
				\end{lemma}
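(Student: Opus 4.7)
The plan is to prove (a) by substitution, and (b), (c) uniformly by projecting $V$ onto the appropriate pair of coordinates and applying the Medvedev--Scanlon theorem (Theorem \ref{inv_curves}) to the resulting periodic curve in $(\bP^1_F)^2$. The edge cases in which two of the indices $i,j,k$ coincide follow immediately from the reflexive clause $\ell \prec \ell$ for $\ell \in I - I_V$ together with the hypotheses, so we may focus on the case where $i,j,k$ are pairwise distinct. A basic tool used throughout: fixing $N > 0$ with $\varphi^N(V) = V$, for any $\ell \in I - I_V$ the projection $\pi_\ell(V) \subseteq \bP^1_F$ is an irreducible $f^N$-invariant closed subvariety; it cannot be a single point (which would place $\ell$ in $I_V$), so $\pi_\ell(V) = \bP^1_F$.

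For (a), the hypotheses give $V \subseteq \{x_j = g(x_i)\} \cap \{x_k = h(x_j)\}$ for polynomials $g, h$ each commuting with some iterate of $f$. Substituting yields $V \subseteq \{x_k = h(g(x_i))\}$, and it suffices to show that $h \circ g$ again commutes with an iterate of $f$. This is immediate: if $g$ commutes with $f^a$ and $h$ with $f^b$, then both commute with $f^{ab}$, hence so does $h \circ g$. (Alternatively one may invoke the explicit description in Proposition \ref{prop:all_g}(d), which is manifestly closed under composition.)

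For (b) and (c) the key step is to let $W$ denote $\pi_{j,k}(V)$ and $\pi_{i,j}(V)$ respectively. In either case $W$ is a closed irreducible subvariety of $(\bP^1_F)^2$ (projections from projective varieties are closed), and the $\varphi^N$-invariance of $V$ transfers to $(f^N, f^N)$-invariance of $W$. What we must verify is that $W$ is a proper subvariety of $(\bP^1_F)^2$ (hence a curve) and that it surjects onto each factor. Surjectivity comes from the preceding observation that $\pi_\ell(V) = \bP^1_F$ for the relevant $\ell$. Properness for (b) is clear since $W$ is parametrized by $t \mapsto (g(t), h(t))$ with $g, h$ non-constant (as $j, k \notin I_V$); for (c), $W$ lies in $\{g(s) = h(t)\}$, which is a proper subvariety of $(\bP^1_F)^2$ because $g$ and $h$ are non-constant (otherwise $k \in I_V$).

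With $W$ thus seen to be an irreducible $(f,f)$-periodic curve in $(\bP^1_F)^2$ surjecting onto each factor, Theorem \ref{inv_curves} forces $W$ to be defined by a single equation of type (B), i.e.\ $x_b = \psi(x_a)$ for some polynomial $\psi$ commuting with an iterate of $f$, since type (A) is incompatible with surjectivity onto both factors. Pulling back along the projection, $V$ itself is contained in the corresponding hypersurface in $(\bP^1_F)^n$, yielding $j \prec k$ or $k \prec j$ for (b), and $i \prec j$ or $j \prec i$ for (c). The only mildly technical point is the verification that $W$ is a proper curve surjecting onto both factors, but both pieces reduce to the routine dimension and closedness remarks above, so I expect no serious obstacle.
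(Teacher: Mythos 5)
Your proof is correct, and it takes a genuinely different route from the paper's on the two substantive parts. For (b), the paper stays entirely at the level of polynomials: since $V$ lies on both $x_j=g_1(x_i)$ and $x_k=g_2(x_i)$, Proposition \ref{prop:all_g}(d) --- the classification of all polynomials commuting with an iterate of $f$ --- gives $g_1=g_3\circ g_2$ or $g_2=g_3\circ g_1$ for some $g_3$ in the same class, and substitution yields $k\prec j$ or $j\prec k$ without any projection or renewed use of Theorem \ref{inv_curves}. You instead project to the $(j,k)$-factor and re-run Medvedev--Scanlon; this is heavier machinery for (b) specifically, but it has the aesthetic advantage of treating (b) and (c) by a single uniform argument. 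For (c), both proofs ultimately study the curve $\pi_{i,j}(V)$: the paper reaches it indirectly as $\eta^{-1}(\pi_{i,j,k}(V))$ for the section $\eta(y_i,y_j)=(y_i,y_j,g_1(y_i))$ of the $(i,j,k)$-projection, whereas you project straight to $(\bP^1_F)^2$ and observe the image lies in the proper subvariety $\{g_1(x_i)=g_2(x_j)\}$, hence has dimension exactly one. Your version is shorter and also sidesteps a small subtlety in the paper's route, namely that $\eta$ only intertwines the diagonal dynamics after passing to an iterate of $f$ that commutes with $g_1$, so periodicity of $\eta^{-1}(\pi(V))$ has to be checked with respect to that iterate; by contrast, direct projection commutes with $\varphi$ on the nose. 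Both approaches are sound, and the remaining ingredients you use (images of irreducibles are irreducible, $\pi_\ell(V)=\bP^1_F$ for $\ell\notin I_V$, and invariance passes to images under the coordinate projections) are all correctly justified.
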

				\begin{proof}
					We may assume $i$, $j$, and $k$ are distinct, otherwise there is
					nothing to prove.
					Part (a) is immediate from the definition of $\prec$. For part (b),
					we have that $V$ is contained in hypersurfaces $x_j=g_1(x_i)$
					and $x_k=g_2(x_i)$. By Proposition \ref{prop:all_g}, we may write
					$g_1=g_3\circ g_2$ or $g_2=g_3\circ g_1$ for some $g_3$
					commuting with an iterate of $f$. This implies $k\prec j$
					or $j\prec k$.
					
					Now we prove part (c). Let $\pi$ denote the projection
					from $(\bP^1)^n$ onto the $(i,j,k)$-factor $(\bP^1)^3$. We have that
					$\pi(V)$ is an irreducible $(f,f,f)$-periodic curve of $(\bP^1)^3$ 
					contained in the (not necessarily irreducible) curve
					given by $x_k=g_1(x_i)$ and $x_k=g_2(x_j)$ (note that we must have
					$\dim (\pi(V))>0$ since $i,j,k\notin I_V$). Now we consider the 
					closed embedding:
					$$(\bP^1_F)^2\stackrel{\eta}{\longrightarrow}(\bP^1_F)^3$$
					defined by $\eta(y_i,y_j)=(y_i,y_j,g_1(y_i))$. Now $\eta^{-1}(\pi(V))$
					is an irreducible $(f,f)$-periodic curve 
					of $(\bP^1_F)^2$ whose projection to each factor $\bP_1$ is surjective
					since $i,j\notin I_V$. Therefore $\eta^{-1}(\pi(V))$
					is given by either $y_i=g_3(y_j)$ or $y_j=g_3(y_i)$
					for some $g_3$ commuting with an iterate of $f$. This implies
					either $j\prec i$ or $i\prec j$.
				\end{proof}
				
				A chain is either a tuple of one element $(i)$ where 
				$i\notin I_V$ (equivalently $i\prec i$), or
				an ordered set of distinct elements $i_1\prec i_2\prec\ldots\prec i_l$.
				If $\cI=(i_1,\ldots,i_l)$ is a chain, we denote the underlying set
				(or the support)
				$\{i_1,\ldots,i_l\}$ by $s(\cI)$.
				Note that it is possible for many chains to have a common support, 
				for example if
				$V$ is contained in $x_j=g(x_i)$ where $g$ is linear then
				both $(i,j)$ and $(j,i)$ are chains. 
				By Lemma \ref{become_chain}, if $\cI$ is a chain, $i\in I$
				and $i\prec j$ or $j\prec i$ for some $j\in \cI$ then we can enlarge $\cI$
				into a chain whose support is $s(\cI)\cup\{i\}$. We have 
				that there exist maximal chains $\cI_1,\ldots,\cI_l$
				whose supports partition $I-I_V$. Although the collection
				$\{\cI_1,\ldots,\cI_l\}$ is not uniquely determined by $V$,
				the collection of supports $\{s(\cI_1),\ldots,s(\cI_l)\}$
				is. To prove these facts, one may define an equivalence relation
				$\approx$ on $I-I_V$ by $i\approx j$ if and only if 
				$i\prec j$ or $j\prec i$. Then it is easy to prove that
				$\{s(\cI_1),\ldots,s(\cI_l)\}$ is exactly the collection
				of equivalence classes. 
				
				For an ordered subset $J$ 
				of $I$, we define the following factor of $(\bP^1)^n$:
				$$(\bP^1)^J\colon =\prod_{j\in J} \bP^1.$$
				For a collection of ordered sets $J_1,\ldots,J_l$
				whose underlying (i.e. unordered) sets partition $I$,
				we have the canonical isomorphism:
				$$(\bP^1)^n=(\bP^1)^{J_1}\times\ldots\times (\bP^1)^{J_l}.$$ 
				We now have the following result:
				\begin{proposition}\label{product_of_curves}
					Let $f$ and $\varphi$ be as in Theorem \ref{inv_curves}. Let $V$
					be an irreducible $\varphi$-preperiodic subvariety of $(\bP^1_F)^n$.
					Assume that $\dim(V)>0$. Let $I=\{1,\ldots,n\}$, and let $I_V$ denote
					the set of all $i$'s such that $V$ is contained in a hypersurface
					of the form $x_i=\zeta_i$ where $\zeta_i$ is $f$-preperiodic.
					We fix a choice of an order on $I_V$, write $l=\dim(V)$.
					There exist a collection of ordered sets $J_1,\ldots,J_l$
					whose underlying sets partition $I-I_V$ such that under
					the canonical isomorphism
					$$(\bP^1)^n=(\bP^1)^{I_V}\times(\bP^1)^{J_1}\times\ldots\times (\bP^1)^{J_l},$$
					we have:
					$$V=(\prod_{i\in I_V} \{\zeta_i\})\times V_1\times\ldots\times V_l$$
					where $V_k$ is an $(f,\ldots,f)$-preperiodic curve of
					$(\bP^1)^{J_k}$ for $1\leq k\leq l$.   
				\end{proposition}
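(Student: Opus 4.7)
The plan is to package the chain and equivalence-class structure on $I-I_V$ from the paragraphs preceding the proposition. First I observe that the discussion leading up to Lemma \ref{become_chain} and its proof carry over verbatim once Theorem \ref{inv_curves} is extended to preperiodic $V$. The extension itself is obtained by passing to an iterate $\varphi^N$ for which $W\colon=\varphi^N(V)$ is periodic and, by Proposition \ref{prop:all_g}, every polynomial commuting with an iterate of $f$ commutes with $f^N$: type-(A) equations on $W$ pull back to type-(A) equations on $V$ with preperiodic $\zeta$, while type-(B) equations pull back, via Proposition \ref{prop:all_g}(d), to type-(B) equations on $V$ with the same class of polynomials. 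Granting this, I obtain the $\approx$-equivalence classes $s(\cI_1),\ldots,s(\cI_m)$ on $I-I_V$, with maximal chains $\cI_k$, and set $J_k\colon=\cI_k$ (ordered by $\prec$) and $V_k\colon=\pi_{J_k}(V)$, where $\pi_{J_k}\colon(\bP^1_F)^n\to(\bP^1_F)^{J_k}$ is the projection.

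Next I verify the product decomposition
\[
V \;=\; \Bigl(\prod_{i\in I_V}\{\zeta_i\}\Bigr)\times V_1\times\cdots\times V_m .
\]
The inclusion $\subseteq$ is immediate. For $\supseteq$, I use the (extended) Medvedev--Scanlon description to write $V$ as the common zero locus of type-(A) and type-(B) equations. Each type-(A) equation $x_i=\zeta_i$ has $i\in I_V$ and is satisfied on the right-hand side. A type-(B) equation $x_j=g(x_i)$ forces both $i\notin I_V$ and $j\notin I_V$ (otherwise irreducibility of $V$ and finiteness of $f^{-1}$-fibres would put the other index in $I_V$ too), hence $i\prec j$, hence $i\approx j$, so $i$ and $j$ both lie in a single $s(\cI_k)$, and the equation is built into the defining conditions of $V_k$. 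Thus every defining equation of $V$ is satisfied on the right-hand side.

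Third, each $V_k$ is a preperiodic curve. Ordering the chain as $i_1\prec i_2\prec\cdots\prec i_{d_k}$ and using transitivity (Lemma \ref{become_chain}(a)), $V$ is contained in $\{x_{i_r}=g_r(x_{i_1})\}$ for $2\leq r\leq d_k$ with $g_r$ commuting with an iterate of $f$. Projecting, $V_k$ lies in the irreducible curve
\[
C_k \;\colon=\; \{(y_1,\ldots,y_{d_k}): y_r=g_r(y_1)\text{ for }r\geq 2\}\subset(\bP^1_F)^{J_k},
\]
the graph of $(g_2,\ldots,g_{d_k})$. Since $i_1\notin I_V$, the projection of $V_k$ onto its $y_1$-factor covers all of $\bP^1_F$, so $V_k=C_k$, a curve. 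Preperiodicity of $V_k$ follows because $\varphi$ acts componentwise. A dimension count then gives $\dim V=\sum_k\dim V_k=m$, whence $m=l$.

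The principal obstacle is the preperiodic extension invoked at the start: Theorem \ref{inv_curves} and the surrounding chain discussion are stated only for periodic $V$, and verifying that the pull-back along $\varphi^N$ really produces type-(A) and type-(B) equations of the required form, with all the needed $\prec$-relations intact, is where Proposition \ref{prop:all_g}(d) does most of the work. Once this reduction is in hand, the remaining argument is essentially combinatorial bookkeeping about the partition of $I-I_V$ into $\approx$-classes.
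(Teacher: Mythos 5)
Your reduction step contains a genuine gap, and it is exactly the step you flag at the end as "the principal obstacle." You claim that when $W=\varphi^N(V)$ is periodic, a type-(B) equation $x_j=g(x_i)$ defining $W$ pulls back, via Proposition \ref{prop:all_g}(d), to a type-(B) equation for $V$ with $g'$ again commuting with an iterate of $f$. This is false: the "extension" of Theorem \ref{inv_curves} to preperiodic subvarieties does not hold. The preimage $(\varphi^N)^{-1}(\{x_j=g(x_i)\})$ is cut out by $f^N(x_j)-g(f^N(x_i))=0$, whose irreducible factors need not be of the form $x_j-g'(x_i)$ or $x_i-g'(x_j)$ at all. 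Concretely, take $f(X)=X^3+X$ and $W$ the diagonal $\{y=x\}$ in $(\bP^1_F)^2$; then $f(y)-f(x)=(y-x)(y^2+xy+x^2+1)$, so $\varphi^{-1}(W)$ has a preperiodic component $\{y^2+xy+x^2+1=0\}$ that is not a graph of any polynomial, let alone one commuting with an iterate of $f$. (Even when the component is a graph, as with $f(X)=X^2+1$ and the component $\{y=-x\}$ of $\varphi^{-1}(\{y=x\})$, the polynomial $-X$ does not lie in $M(f^\infty)$.) Consequently the $\prec$-relation and Lemma \ref{become_chain} cannot be "carried over verbatim" to preperiodic $V$, and your third paragraph — which writes $V$ inside graphs $\{x_{i_r}=g_r(x_{i_1})\}$ with $g_r$ commuting with an iterate of $f$ — relies on an unavailable hypothesis.

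The paper avoids this entirely. It establishes the product decomposition only for the periodic subvariety $W=\varphi^m(V)$, where Theorem \ref{inv_curves} and the chain machinery genuinely apply, and then transfers the \emph{product structure} (not the defining equations) back to $V$: since $\varphi^m$ is itself a product map, $(\varphi^m)^{-1}$ of a product is a product of preimages, and over the algebraically closed field $F$ every irreducible component of a product of varieties is a product of irreducible components. As $V$ is one such component of $(\varphi^m)^{-1}(W)$, it inherits the shape $(\prod\{\zeta_i\})\times V_1\times\cdots\times V_l$ with each $V_k$ a (preperiodic, but not necessarily type-(B)) curve, and $I_V=I_W$ since $\varphi^m$ is finite. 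If you want to salvage your write-up, the fix is to stop at "take an iterate $\varphi^m$ with $W=\varphi^m(V)$ periodic, run the chain argument on $W$," and then make the pull-back-of-product-components argument explicit rather than trying to pull back Medvedev--Scanlon equations.
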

				\begin{proof}
					There exists $m$ such that $\varphi^m(V)$ is periodic. The conclusion of
					the Proposition for $\varphi^m(V)$ will imply the same conclusion for $V$,
					hence we may assume $V$ is periodic. We associate to $V$ a binary
					relation $\prec$ on $I$ as before. Then there exist maximal chains 
					$\cI_1,\ldots,\cI_l$ whose supports partition
					$I-I_V$. We now take $J_k=\cI_k$ for $1\leq k\leq l$. 
				\end{proof}

  \section{The Dynamical Hasse Principle} \label{Hasse}
    \subsection{Motivation and Main Results}\label{intro_Hasse}
    In this section, let $S$ be a fixed finite subset of $M_K$ containing all the 
    archimedean places. 
    For every variety $X$ over $K$, we define:
		\begin{equation}\label{define_XKS}
			X(K,S)=\prod_{v\notin S}X(K_v)
		\end{equation}
		equipped with the product topology, where each
		$X(K_v)$ is given the $v$-adic topology which is Hausdorff
		by separatedness of $X$.
		The set $X(K)$ is embedded into $X(K,S)$ diagonally. 
		For every subset $T$ of $X(K,S)$,
		write $\cC(T)$ to denote the closure of $T$ in $X(K,S)$. 
		The following theorem has been established by Poonen and Voloch \cite[Theorem A]{PoVo}:
		\begin{theorem}\label{PV}
			Assume that $K$ is a function field. Let $A$ be an abelian variety 
			and $V$ a subvariety of $A$ both defined over $K$. Then:
			\begin{equation}\label{PV_equation}
			V(K)=V(K,S)\cap \cC(A(K)).
			\end{equation}	
		\end{theorem}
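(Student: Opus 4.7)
The containment $V(K)\subseteq V(K,S)\cap\cC(A(K))$ is immediate from the definitions, since every $Q\in V(K)$ sits diagonally inside $V(K,S)$ and also belongs to $A(K)\subseteq\cC(A(K))$. All the content is in the reverse inclusion, which I would attack as follows.

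Fix $P=(P_v)_{v\notin S}\in V(K,S)\cap\cC(A(K))$ and choose a sequence $(Q_n)\subset A(K)$ with $Q_n\to P_v$ in the $v$-adic topology for every $v\notin S$; the task is to exhibit $Q\in V(K)$ whose diagonal image in $A(K,S)$ equals $P$. The first reduction is to replace $A(K)$ by a finitely generated subgroup $\Gamma$ containing every $Q_n$. This is possible by the Lang--N\'eron theorem, which asserts that $A(K)$ modulo the $K/\kappa$-trace of $A$ is finitely generated (this is exactly where the hypothesis that $K$ is a function field of characteristic zero, with $\kappa$ algebraically closed in $K$, is used). After this reduction, $P$ lies in the closure $\overline{\Gamma}\subset\prod_{v\notin S}A(K_v)$, a closed topological subgroup far more rigid than a generic adelic object.

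The second step is a place-by-place analysis using the $v$-adic analytic structure of $A$. At any place $v\notin S$ of good reduction, the kernel of reduction $A^{1}(K_v)=\ker\bigl(A(K_v)\to A(k_v)\bigr)$ is a formal group isomorphic via the $v$-adic logarithm to an open subgroup of $K_v^{\dim A}$. After passing to a subsequence so that all $Q_n$ reduce to the same point of $A(k_v)$, the differences $Q_n-Q_1$ lie in $A^{1}(K_v)$, and the $v$-adic closure of $\log(\Gamma\cap A^{1}(K_v))$ is a $K_v$-vector subspace $W_v$. The translated subvariety $V-Q_1$ cuts out a rigid-analytic locus through the origin, and a Chabauty-style intersection of $W_v$ with this locus constrains $\overline{\Gamma}\cap V(K_v)$ to be essentially $\Gamma\cap V(K)$.

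The main obstacle, which I expect to be the hardest part of the argument, is globalizing: an adelic limit $P\in\overline{\Gamma}$ is strictly stronger than a tuple of place-by-place limits, and the sequence $(Q_n)$ converges everywhere simultaneously to elements of a \emph{single} finitely generated group $\Gamma$. The Poonen--Voloch strategy is to combine the local logarithm/Chabauty pictures at sufficiently many places of good reduction with the rigidity of the diagonal embedding $\Gamma\hookrightarrow\prod_{v\notin S}A(K_v)$, forcing any element of $\overline{\Gamma}\cap V(K,S)$ to coincide with the diagonal image of a genuine $K$-point of $V$. Making this global coherence precise, so that no spurious non-$K$-rational adelic limit sits inside $V(K,S)$ at every place, is the delicate core of the proof.
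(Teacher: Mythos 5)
This theorem is not proved in the paper at all: it is quoted verbatim from Poonen--Voloch \cite[Theorem A]{PoVo} in Section 3.1 as \emph{motivation} for the dynamical Hasse principle, and none of the paper's own results rely on it. So there is no internal proof to compare your attempt against; your proposal can only be judged on its own merits, and I think it does not stand.

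There are two concrete problems. First, your ``first reduction'' --- replacing $A(K)$ by a \emph{finitely generated} subgroup $\Gamma$ containing all the $Q_n$, justified by Lang--N\'eron --- does not go through. Lang--N\'eron says $A(K)$ modulo the $K/\kappa$-trace is finitely generated; it does not say $A(K)$ itself is. If $A$ has a nontrivial $K/\kappa$-trace $B$, then $B(\kappa)\subseteq A(K)$ can be uncountable (e.g. $\kappa=\C$), so the $Q_n$ need not lie in any finitely generated subgroup, and the rest of your argument (treating $P$ as an adelic limit inside $\overline{\Gamma}$ for a finitely generated $\Gamma$) never gets off the ground. This is precisely the reason the actual Poonen--Voloch theorem carries hypotheses excluding isotrivial quotients (or imposing finiteness elsewhere); the citation in this paper is itself a bit loose on that point. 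Second, and more fundamentally, you explicitly defer the heart of the matter: you say that forcing an adelic limit in $\overline{\Gamma}\cap V(K,S)$ to come from a genuine $K$-point is ``the delicate core of the proof'' and leave it there. That deferred step \emph{is} the theorem; a sketch that names the gap without closing it is not a proof. I would also flag that the place-by-place $v$-adic logarithm/Chabauty picture you describe is not, to my knowledge, how Poonen--Voloch actually argue --- their proof is organized around the Mordell--Weil sieve and the structure of the adelic closure of $A(K)$ via reduction maps --- so even as a reconstruction of the cited argument this is speculative.
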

		
		The analogue of Theorem \ref{PV} when $K$ is a number field is still 
		widely open. The main motivation for Poonen-Voloch theorem is the 
		determination
		of $V(K)$ especially when $V$ is a curve of genus at least 2 embedded
		into its Jacobian. More precisely, they are interested in the 
		Brauer-Manin obstruction to
		the Hasse principle studied by various authors. In fact, the idea of taking 
		the (coarser) intersection 
		between $V(K_\fp)$ and the $\fp$-adic closure of $A(K)$ in $A(K_\fp)$, 
		where
		$\fp$ is a prime of $K$, is dated back to Chabauty's work in the 1940s. 
		We refer the readers to \cite{PoVo} and the references there
		for more details.

		Now return to our general setting,
		let $\varphi$ be a $K$-morphism of $X$ to itself, $V$
			a closed subvariety 
			of $X$, and $P\in X(K)$ a $K$-rational point of $X$. 
		We have the following inclusion (note the similarity with (\ref{PV_equation})
		where the group $A(K)$ is
		replaced by the orbit $\cO_{\varphi}(P)$): 
				\begin{equation}\label{BM-subset}
				V(K)\cap \cO_{\varphi}(P)\subseteq V(K,S)\cap\cC(\cO_{\varphi}(P)).
				\end{equation}

		Motivated by the Poonen-Voloch theorem, Hsia and Silverman \cite[p. 237--238]{hsiasil_BM}
		ask:

		\begin{question}\label{BM1}
			Let $V^{\textpp}$ denote
			the union of all positive dimensional
			preperiodic subvarieties of $V$. 
			Assume that $\cO_{\varphi}(P)\cap V^{\textpp}(K)=\emptyset$. 
			When does equality hold in (\ref{BM-subset})?
		\end{question}
		
		The requirement $\cO_{\varphi}(P)\cap V^{\textpp}(K)=\emptyset$ is
		necessary as explained in \cite[p. 238]{hsiasil_BM}. In this paper,
		we restrict to the following question:
		

		 \begin{question}\label{BM2}
		 	Assume that $V$ is preperiodic and $V(K)\cap\cO_{\varphi}(P)=\emptyset$, when can we conclude
		 	 $V(K,S)\cap\cC(\cO_{\varphi}(P))=\emptyset$?
		 \end{question}  

			Our main theorems
			below will address Question \ref{BM2} when
			$X=(\bP^1)^n$, and $\varphi$ is the diagonally split morphism
			associated to a polynomial $f$. 
			We begin with
			the case $\dim(V)=0$:
			\begin{theorem}\label{intro_thm0}
				Let $f\in K[X]$ be a polynomial of degree at least 2, let $n\geq2$
				be an integer, 
				and let $\varphi$ denote the split morphism 
				$(f,\ldots,f)\colon (\bP^1_K)^n\longrightarrow (\bP^1_K)^n$. 
				Let $V$
				be a zero dimensional subvariety of $(\bP^1_K)^n$. The following hold:
				\begin{itemize}
				  \item [(a)]  
				  For every $P\in (\bP^1)^n(K)$ such that
				  	$V(K)\cap\cO_{\varphi}(P)=\emptyset$, 
				  	there exist infinitely many primes $\fp$
				  	such that $V(K_\fp)$ does not intersect the $\fp$-adic closure 
				  	of $\cO_{\varphi}(P)$. 
				  
				  \item [(b)] Question \ref{BM1} has an affirmative answer, namely
				  for every $P\in X(K)$ we have:
				  	$$V(K)\cap\cO_{\varphi}(P)=V(K,S)\cap\cC(\cO_{\varphi}(P)).$$
				  
				  \item [(c)] In this part only, we assume $f$ is \textbf{special} and $V$
				  is \textbf{preperiodic}. Then for every
				  $P\in (\bP^1)^n(K)$ such that $V(K)\cap\cO_{\varphi}(P)=\emptyset$,
				  for \textbf{almost all} primes $\fp$ of $K$, we have $V(K_\fp)$
				  does not intersect the $\fp$-adic closure
				  of $\cO_{\varphi}(P)$.

				\end{itemize}
			\end{theorem}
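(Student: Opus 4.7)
My plan is to reduce all three parts to a statement about a single geometric target point. Since $V$ is zero-dimensional over $K$, the set $V(\bar K)$ is a finite Galois-stable set $\{Q^{(1)},\ldots,Q^{(r)}\}$; by working over a finite extension $K'$ of $K$ (every prime of $K$ has finitely many primes above it in $K'$ and vice versa, so ``infinitely many primes'' passes between the two) I may assume each $Q^{(j)}$ is $K$-rational, and by a finite union it suffices to prove part (a) for a single $Q=(Q_1,\ldots,Q_n)\in (\bP^1)^n(K)\setminus \cO_\varphi(P)$. The hypothesis becomes that no $m\geq 0$ satisfies $f^m(P_i)=Q_i$ simultaneously for all $i$, and I must produce infinitely many $\fp$ at which no integer $m$ makes $f^m(P_i)$ simultaneously $\fp$-adically close to $Q_i$ for every $i$.

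For part (a), I would split into cases by the dynamical type of $Q$. If some $Q_i$ is not $f$-preperiodic, projecting onto the $i$-th factor reduces to the one-variable statement: for all but finitely many $\fp$, $Q_i \notin \cC(\cO_f(P_i))_\fp$ in $\bP^1(K_\fp)$. This is the core content of the Silverman--Voloch/BGHKST program (see \cite{SilVo} and \cite{BGHKST}), provable by an explicit $\fp$-adic analysis of $f$ near $Q_i$ combined with a canonical-height gap. If every $Q_i$ is $f$-preperiodic and $P$ is $\varphi$-preperiodic too, the eventual $\varphi$-orbit of $P$ forms a finite cycle disjoint from that of $Q$, and two disjoint finite cycles remain disjoint modulo $\fp$ for almost all $\fp$. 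The remaining subcase---$Q$ preperiodic but $P$ not---follows by a local canonical-height argument at primes $\fp$ of good reduction for the data: $\widehat h_f(f^m(P_i))\to\infty$ while $\widehat h_f(Q_i)=0$, so a standard local--global height inequality forces $f^m(P_i)$ $\fp$-adically away from $Q_i$ at almost every $\fp$.

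Part (b) then follows from part (a) by a standard argument: any point of $V(K,S)\cap \cC(\cO_\varphi(P))$ not already coming from $V(K)\cap \cO_\varphi(P)$ would yield, via finiteness of $V(\bar K)$ and the pigeonhole principle, a single $\bar K$-point $Q\notin \cO_\varphi(P)$ lying in the $\fp$-adic closure at infinitely many $\fp$; this contradicts part (a) applied to the Galois orbit of $Q$ (intersected with a Chebotarev-type set of primes splitting $Q$). For part (c), with $f$ special, the dynamical picture transfers via a finite cover to multiplication by $\deg(f)$ on $\Gmn$ (or an analogue in the Chebyshev case): the orbit $\cO_\varphi(P)$ lies inside the image of a cyclic subsemigroup of $\Gmn(K)$, and the preperiodic $V$ lifts to a torsion translate of a subgroup. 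A formal-group/$p$-adic-analytic argument in the spirit of \cite{PoVo} then yields $\fp$-adic separation at \emph{almost all} $\fp$, strengthening the conclusion of (a).

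The principal obstacle I anticipate is the non-preperiodic case of (a): one must reconcile the local $\fp$-adic behavior across all $n$ coordinates simultaneously and uniformly over a Galois orbit of $Q$'s, while excluding only finitely many ``bad'' primes (those where $f$ has bad reduction, where $P_i$ or $Q_i$ specialize to $\infty$, or where the relevant height inequality degenerates). A secondary difficulty is the strengthening in part (c) from ``infinitely many'' to ``almost all'': this requires the formal-group structure available when $f$ is special, and must be exploited directly via the $\fp$-adic logarithm on $\Gmn$ rather than through the Medvedev--Scanlon machinery.
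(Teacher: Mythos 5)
Your high-level plan (reduce to a single coordinate and then argue one-dimensionally) is in the same spirit as the paper, but the proposal has several genuine gaps in part (a), and the paper's mechanism --- the Silverman--Voloch/Ingram--Silverman primitive-divisor trick --- is entirely absent.

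First, the ``finite union over Galois conjugates of $Q$'' reduction is invalid for a conclusion of the form ``there exist infinitely many primes $\fp$.'' Knowing, for each of the finitely many points $Q^{(1)},\dots,Q^{(r)}\in V(\bar K)$, that infinitely many primes separate the orbit from $Q^{(j)}$ does not produce a single infinite set of primes that works for all $j$ simultaneously; the $r$ infinite sets could be pairwise disjoint. The paper avoids this by never splitting up $V$: it lets $U$ be the finite set of \emph{first coordinates} of all points in $V$ and handles $U$ in one stroke via Lemma~\ref{Trick}.

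Second, your three-case analysis by the ``dynamical type of $Q$'' contains two broken steps. In the case ``some $Q_i$ not $f$-preperiodic,'' the claim that for all but finitely many $\fp$ one has $Q_i\notin\cC(\cO_f(P_i))_\fp$ silently presupposes $Q_i\notin\cO_f(P_i)$, which does not follow from $Q\notin\cO_\varphi(P)$ (one can have $Q_i\in\cO_f(P_i)$ for every $i$ separately while never simultaneously); moreover an ``almost all $\fp$'' statement here is an overclaim --- that strength is only obtained in the paper for \emph{special} $f$ (part (c)), precisely because then the problem transports to $\Gm$. In the case ``$Q$ preperiodic, $P$ not,'' the proposed ``local--global height inequality'' is not an argument: $\widehat h_f(f^m(P_i))\to\infty$ while $\widehat h_f(Q_i)=0$ says nothing about $\fp$-adic proximity of $f^m(P_i)$ to $Q_i$ at any fixed $\fp$, and in fact $f^m(P_i)$ will typically enter the residue cycle of $Q_i$ mod $\fp$ for infinitely many $\fp$.

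What is missing is the core engine. The paper's proof of (a) goes as follows: since $P$ is wandering, some coordinate $\alpha_1$ is $f$-wandering; because $U$ (the first coordinates of $V$) is finite, there is a largest $N$ with $f^N(\alpha_1)\in U$, so after replacing $P$ by $\varphi^{N+1}(P)$ the $f$-orbit of $\alpha_1$ avoids $U$ entirely (note this replacement preserves $V(K)\cap\cO_\varphi(P)=\emptyset$). Then Lemma~\ref{Trick} is applied: choose a periodic $\gamma$ of exact period $\ge 3$ whose cycle avoids $U$; by the Ingram--Silverman primitive-divisor theorem (Lemma~\ref{good_p}) there are infinitely many $\fp$ at which $v_\fp(f^{\mu}(\alpha_1)-\gamma)>0$ for some $\mu$, whence the whole tail of the orbit is $\fp$-adically pinned to the cycle of $\gamma$ and hence $\fp$-adically bounded away from all of $U$. (When $\widehat h_f(\alpha_1)=0$ one reduces to the isotrivial function-field situation where the orbit is discrete.) This single argument covers all your cases at once and produces the needed infinite set of primes. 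Your sketch of part (c) via transfer to $\Gmn$ and a $p$-adic-analytic/\'etale argument is correct in outline (the paper uses Theorem~\ref{etale}, the AKNTVV \'etale lifting result for $\Gm$ over $\scrO_\fp$, plus the $x\mapsto x+1/x$ cover for Chebyshev). Part (b) is a formal consequence of (a), as you indicate, once (a) is established correctly.
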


			Part (b) actually holds
			for maps of the form $(f_1,\ldots,f_n)$
			where each $f_i$ is an arbitrary rational map
			of degree at least 2. This
			more general result follows from the main
			results of Silverman and Voloch \cite{SilVo}.
			We will see that the trick used to establish
			part (a) in Subsection \ref{Prelim}, which is similar to
			one used in \cite{SilVo}, appears repeatedly in this section and
			can be modified to reduce our problem (when $\dim(V)>0$) to the \'etale case
			(see Subsection \ref{disint_case}). 
			Part (c) of Theorem \ref{intro_thm0}
			could be generalized completely, we have:

			\begin{theorem}\label{intro_thm3}
			Let $f\in K[X]$ be a \textbf{special}
			polynomial of degree $d\geq 2$. Let  $n\geq 2$, and 
			$\varphi=(f,\ldots,f)$ be as in Theorem \ref{intro_thm0}.
			Let $V$ be a subvariety of $(\bP^1_K)^n$ such
			that every irreducible component of $V_{\bar{K}}$
			is a preperiodic subvariety. 
			Let $P\in (\bP^1)^n(K)$ such that 
			$V(K)\cap \cO_{\varphi}(P)=\emptyset$.
			Then for \textbf{almost all} primes $\fp$ of $K$,
			$V(K_\fp)$ does not intersect the
			$\fp$-adic closure of $\cO_{\varphi}(P)$.
			Consequently, Question \ref{BM2} has an affirmative answer:
			$V(K,S)\cap\cC(\cO_{\varphi}(P))=\emptyset$.
			\end{theorem}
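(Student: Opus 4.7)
The plan is to reduce the general preperiodic case to the zero-dimensional case, Theorem \ref{intro_thm0}(c), via the semiconjugation of special polynomials to monomial dynamics on a torus. First, decomposing $V_{\bar{K}}$ into absolutely irreducible components and grouping into Galois orbits, then passing to a finite extension $L/K$ over which each component is defined, I may assume that $V$ is absolutely irreducible preperiodic over $K$: the hypothesis is equivalent to $V_{\bar{K}}(\bar{K}) \cap \cO_\varphi(P) = \emptyset$ because $\cO_\varphi(P) \subseteq (\bP^1)^n(K)$, and a finite exceptional set of primes of $L$ restricts to a finite exceptional set of primes of $K$. A componentwise linear change of coordinates is harmless for $\fp$-adic closures, so I further assume $f \in \{X^d,\pm C_d\}$.

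To reduce the Chebyshev case to the monomial case, consider the finite surjective morphism $\Pi := (\pi_0,\ldots,\pi_0)\colon (\bP^1)^n \to (\bP^1)^n$, where $\pi_0(y) = y + y^{-1}$, satisfying $C_d \circ \pi_0 = \pi_0 \circ (y \mapsto y^d)$. Picking a lift $\tilde{P} \in \Pi^{-1}(P)(\bar{K})$, setting $K' = K(\tilde{P})$, $\varphi_0 = (X^d,\ldots,X^d)$, and $\tilde{V} = \Pi^{-1}(V)$, the $(\bZ/2\bZ)^n$-equivariance of $\varphi_0$ under the Galois action of $\Pi$ (each factor acting by $y\mapsto y^{-1}$) yields $\varphi_0^m(\tilde{V}) = \Pi^{-1}(\varphi^m(V))$, so $\tilde{V}$ is preperiodic for $\varphi_0$. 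The avoidance hypothesis transports through $\Pi$, and by properness of $\Pi$ any $\fp$-adic accumulation of $\cO_\varphi(P)$ on $V(K_\fp)$ lifts, after a subsequence and a choice of prime $\fp'$ of $K'$ above $\fp$, to an accumulation of $\cO_{\varphi_0}(\tilde{P})$ on $\tilde{V}(K'_{\fp'})$. So the monomial case over $K'$ implies the Chebyshev case over $K$, up to finitely many primes. It therefore suffices to treat $f = X^d$.

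For $f = X^d$, stratify coordinates: let $I_0$ (respectively $I_\infty$) be the set of $i$ on which $V$ forces $x_i = 0$ (respectively $\infty$), and set $I_* = \{1,\ldots,n\} \setminus (I_0 \cup I_\infty)$. For $i \in I_0$ with $P_i \neq 0$, almost all $\fp$ satisfy $v_\fp(P_i) = 0$; then $v_\fp(P_i^{d^m}) = 0$ uniformly in $m$, so the $i$-th coordinate of the $\fp$-adic closure of the orbit is bounded away from $0$ and cannot lie in $V(K_\fp)$; symmetrically for $I_\infty$. Excluding finitely many primes I assume each coordinate of $P$ matches the value forced by $V$ on $I_0 \cup I_\infty$, and projection reduces the problem to an irreducible preperiodic $V_* \subseteq \Gm^{|I_*|}$ with $P_* \in \Gm^{|I_*|}(K)$ and $\cO(P_*) \cap V_* = \emptyset$. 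The classification of $[d]$-preperiodic subvarieties of a torus identifies $V_* = \zeta H$ as a torsion coset of a subtorus $H$, with $H$ defined over $K$ (as the stabilizer of the $K$-variety $V_*$) and the image $\pi(\zeta) \in T(K)$ for $T := \Gm^{|I_*|}/H$. The quotient homomorphism $\pi \colon \Gm^{|I_*|} \to T$ intertwines $[d]$-dynamics, sends $V_* = \pi^{-1}(\pi(\zeta))$ to the single $K$-rational torsion point $\pi(\zeta)$, and the hypothesis becomes $\cO_{\varphi'}(\pi(P_*)) \cap \{\pi(\zeta)\} = \emptyset$. Choosing a $K$-splitting $T \cong \Gm^{m'}$ puts this in the $0$-dimensional instance of the theorem with the same $f$ on $(\bP^1)^{m'}$; Theorem \ref{intro_thm0}(c) yields, for almost all $\fp$, the required separation downstairs, and continuity of $\pi$ on the torus pulls it back to separation for $V_*$, hence for $V$.

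The main obstacle I expect is the Chebyshev reduction: verifying that the $\fp$-adic accumulation argument uses only the properness of $\Pi$ (so that it survives passage to a possibly ramified prime $\fp'$ of $K'$), and that $\tilde{V}$ is ``preperiodic enough'' for the monomial case to apply even when its irreducible components are defined only over a quadratic extension. A secondary issue is bookkeeping: merging the several finite exceptional prime sets --- from the descent $L/K$, the Chebyshev lift, the $I_0/I_\infty$ stratification, and the $0$-dimensional case --- into a single finite exceptional set of primes of $K$.
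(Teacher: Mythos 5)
Your overall strategy---reduce to $f=X^d$, pass to the torus, classify the preperiodic $V_*$ as a torsion coset, quotient by its stabilizer, and apply Theorem~\ref{intro_thm0}(c) downstairs---is a reasonable and genuinely different route from the paper's: where you quotient by the stabilizer subtorus and invoke the $0$-dimensional case, the paper instead inducts on $n$ and, once on the torus, cites the AKNTVV Hasse-principle theorem for commutative algebraic groups (their Theorem~4.3). Your Chebyshev lift via $\Pi$ is essentially the paper's $\Phi$, and your properness argument for transporting $\fp$-adic accumulation is sound (though you should also address $-C_d$, e.g.\ by first conjugating $-X^d$ to $X^d$, as the paper does with $\hat f=\pm X^d$).

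However, there is a genuine gap in the monomial case. After matching the coordinates of $P$ forced by $V$ on $I_0\cup I_\infty$, you assert without justification that $P_*\in\Gm^{|I_*|}(K)$ and $V_*\subseteq\Gm^{|I_*|}$. Neither need hold. For the second: take $n=2$, $V$ the closure in $(\bP^1)^2$ of $\{x_1x_2=1\}$; this is $\varphi$-invariant for $\varphi=(X^d,X^d)$, has $I_0=I_\infty=\emptyset$, yet $V$ contains $(0,\infty)$ and $(\infty,0)$, so $V_*=V\not\subseteq\Gm^2$. That defect is harmless \emph{provided} $P_*\in\Gm^{|I_*|}$, since then for almost all $\fp$ the $\fp$-adic closure of $\cO(P_*)$ lies in the unit polydisk and only $V_*\cap\Gm^{|I_*|}$ can be met. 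But the first assertion is the real gap: nothing forces $P_i\in\Gm$ for $i\in I_*$. If some $P_i=0$ (resp.\ $\infty$) with $i\in I_*$, the orbit of $P$ does not land in the torus, your quotient homomorphism argument never starts, and $v_\fp(P_i^{d^m})$ provides no exclusion because $V_*$ actually meets $\{x_i=0\}$. The paper handles exactly this point by induction on $n$: if some coordinate of $P$ is $0$ or $\infty$ one restricts to the corresponding $\varphi$-invariant hyperplane $\{x_i=0\}$ (or $\{x_i=\infty\}$), notes that each irreducible component of $V\cap\{x_i=0\}$ is again preperiodic, and applies the inductive hypothesis in $n-1$ variables. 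Your proposal lacks this inductive fall-back, so it does not cover all cases as written. Adding that induction (or an explicit case split on where $P$'s coordinates lie) closes the gap; the rest of the quotient-torus reduction then goes through.
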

			
						 It has been known since the beginning of the theory
			of complex dynamics that special polynomials and disintegrated
			polynomials have
			very different dynamical behaviours. When
			$f$ is disintegrated, we are still able
			to prove that a Hasse principle analogous to
			Theorem \ref{intro_thm3} holds 
			when $V$ is a curve or a hypersurface: 
						
			\begin{theorem}\label{intro_thm1}
			Let $f\in K[X]$ be a \textbf{disintegrated} polynomial of degree $d\geq 2$. 
			Let $n\geq 2$, and $\varphi=(f,\ldots,f)$ be as in Theorem \ref{intro_thm0}.
			Let $V$ be a $\varphi$-preperiodic
			and absolutely irreducible \textbf{curve} or \textbf{hypersurface} of $(\bP^1_K)^n$. Let $P\in (\bP^1)^n(K)$
			 such that 
			$V(K)\cap\cO_{\varphi}(P)=\emptyset$.
			Then for infinitely many primes $\fp$ of $K$,
			the $\fp$-adic closure of $\cO_{\fp}(P)$
			does not intersect $V(K_\fp)$.
			Consequently, Question \ref{BM2} has an affirmative
			answer: we have $V(K,S)\cap\cC(\cO_{\varphi}(P))=\emptyset$.					
			\end{theorem}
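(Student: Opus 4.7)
The plan is to combine the Medvedev-Scanlon structure theorem (Theorem \ref{inv_curves}) with its product-decomposition consequence (Proposition \ref{product_of_curves}) to reduce $V$ to a short list of explicit shapes, and then to handle each shape by projecting the orbit of $P$ to a few coordinates and invoking the Silverman-Voloch technique from \cite{SilVo}.

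After replacing $\varphi$ by an iterate $\varphi^T$, where $T$ is the period of $V$, and decomposing $\cO_\varphi(P)$ into the $T$ shifted $\varphi^T$-orbits $\cO_{\varphi^T}(\varphi^r(P))$ for $r=0,\ldots,T-1$, one may assume $V$ is $\varphi$-invariant. Applying Proposition \ref{product_of_curves} over $\bar K$ and using that $V$ is absolutely irreducible, one obtains a decomposition $V=\prod_{i\in I_V}\{\zeta_i\}\times V_1\times\cdots\times V_l$ in which each $V_k$ is determined by a chain of type (B) equations from Theorem \ref{inv_curves}. A direct dimension count shows that in the hypersurface case $V$ must be either a coordinate hyperplane $x_{i_0}=\zeta$ (type (A)) or a single graph $x_j=g(x_i)$ (pure type (B) with the other $n-2$ coordinates free); in the curve case $V$ is a product of fixed-point equations together with $V_1$, which is either a coordinate line or a chain-graph curve in some $(\bP^1)^{|J_1|}$.

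For a type (A) piece $x_i=\zeta_i$, the assumption that $\cO_\varphi(P)$ misses $V(K)$ forces the $f$-orbit of $P_i$ on $\bP^1$ to avoid $\zeta_i$ in $K$. Applying the one-variable analogue of Theorem \ref{intro_thm0}(a) (i.e.\ the Silverman-Voloch result specialized to $\bP^1$) then produces infinitely many primes $\fp$ at which the $f$-orbit of $P_i$ $\fp$-adically avoids $\zeta_i$, and each such prime works for $V$ as well. The pure type (B) case is where the essential new content lies. By Proposition \ref{prop:all_g}, $g$ commutes with $f^T$; setting $R_r=f^r(P_j)$ and $S_r=g(f^r(P_i))$, the commutation $g\circ f^T=f^T\circ g$ converts the avoidance hypothesis into the statement that the forward $f^T$-orbits of $R_r$ and $S_r$ in $\bP^1(K)$ never coincide, for each $r\in\{0,\ldots,T-1\}$. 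Viewing $(R_r,S_r)$ as a point of $(\bP^1)^2$ moving under the split self-map $(f^T,f^T)$ with target the diagonal $\Delta$, this is a two-orbit non-collision question on $\bP^1$; off the critical locus of $(f^T,f^T)$ the split map is étale, so one can combine the Silverman-Voloch ``trick'' from Subsection \ref{Prelim} with the étale dynamical Hasse principle of \cite{AKNTVV} to produce the desired infinite set of primes.

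The chief technical obstacle is the subcase $\hat h_f(R_r)=\hat h_f(S_r)$, and most delicately the preperiodic-on-distinct-grand-orbits situation, where a naive height-separation argument fails. Disintegratedness of $f$ is essential here: the finiteness and cyclicity of $M(f^\infty)$, together with the Ritt-style rigidity expressed in Proposition \ref{prop:all_g}, rule out nontrivial coincidences between the two forward $f^T$-orbits; a careful good-reduction analysis then transfers non-collision to the residue field on a density-one set of primes, which is in particular sufficient to be intersected across the finitely many shifts $r$ and still yield infinitely many primes of $K$ for which the $\fp$-adic closure of $\cO_\varphi(P)$ misses $V(K_\fp)$.
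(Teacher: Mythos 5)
Your overall scaffolding — use Medvedev--Scanlon and Proposition \ref{product_of_curves} to put $V$ in normal form, then split into type (A) and type (B) pieces and apply the Silverman--Voloch device together with the \'etale dynamical Hasse principle — is the same skeleton the paper uses. But there are two substantive gaps that the paper spends real effort on and your sketch does not close.

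First, your opening reduction is not valid. Replacing $\varphi$ by $\varphi^T$ lets you assume $V$ is $\varphi$-\emph{invariant} only when $V$ is already $\varphi$-periodic. The theorem assumes only that $V$ is \emph{preperiodic}: there exist $k>0$, $M>0$ with $\varphi^{k+M}(V)=\varphi^k(V)$, while $V$ itself need not recur. In that situation there is no iterate of $\varphi$ fixing $V$, and the avoidance hypothesis $\cO_\varphi(P)\cap V(K)=\emptyset$ does not transfer to the periodic image $\varphi^k(V)$ — indeed the orbit of $P$ may well land on $\varphi^k(V)$ infinitely often. The paper's Step~2 handles this by deducing (from the already-established periodic case applied to $V_0=\varphi^k(V)$) that the orbit eventually lies on $V_0$, then exploiting finiteness of $V\cap V_i$ for $V_i=\varphi^{k+i}(V)$ to turn the problem into a zero-dimensional avoidance statement that Theorem \ref{intro_thm0} can kill. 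Your sketch has no analogue of this step.

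Second, the crux — the type~(B) hypersurface with first coordinate $a$ of positive canonical height — is asserted rather than proved. Recasting $\{x_j=g(x_i)\}$ as two-orbit non-collision against the diagonal is a clean reformulation, but the diagonal is itself a periodic graph of exactly the kind you started with, so the reformulation does not by itself reduce the difficulty; you still have to prove that case. When you write that ``off the critical locus the split map is \'etale, so one can combine the Silverman--Voloch trick with the \'etale dynamical Hasse principle,'' the missing content is precisely how to find infinitely many $\fp$ at which, simultaneously, $P$ becomes $\varphi$-preperiodic mod $\fp$ (required by Theorem \ref{etale}(a) in the function-field case, and in any case needed to make the argument effective) \emph{and} the full reduced orbit of $P$ avoids the critical locus of $\varphi$ so that Theorem \ref{etale} applies. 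The paper does this in Step~1.1.3 by choosing a periodic point $\gamma$ of $\tilde f$ whose orbit avoids $\tilde f'=0$, invoking Ingram--Silverman primitive divisors (Lemma \ref{good_p}) to get $\tilde f^\mu(a)\equiv\gamma\pmod\fp$, and then chasing through the identities $g_j=L_j\circ\tilde f^{m_j}$, $\tilde f\circ L=L^D\circ\tilde f$ to propagate \'etaleness from the $a$-coordinate to all the $b_j$-coordinates. Nothing in your sketch supplies this chain, and without it the invocation of Theorem \ref{etale} is unjustified. You also do not address the wandering case with $\hat h_f(a)=0$ (forcing $K$ a function field and $f$ isotrivial), nor the interaction between nonempty $I_V$ and the chain-graph part in the curve case, both of which need their own (if shorter) arguments in the paper.
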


			Although we expect Theorem \ref{intro_thm1} still holds for an arbitrary
			absolutely irreducible preperiodic subvariety $V$ (i.e. $1<\dim(V)<n-1$), 
			we need to assume
			an extra technical assumption, as follows:
			\begin{theorem}\label{intro_thm1*}
				Let $f$, $n$, and $\varphi$ be as in Theorem \ref{intro_thm1}.
				Assume the technical assumption 
				that every polynomial commuting with an iterate
				of $f$ also commutes with $f$ 
				.
				Let $V$ be an absolutely irreducible $\varphi$-preperiodic
				subvariety of $(\bP^1_K)^n$. Let $P\in (\bP^1_K)^n$
				such that $V(K)\cap \cO_{\varphi}(P)=\emptyset$. Then there exist
				infinitely many primes $\fp$ of $K$ such that the $\fp$-adic closure
				of $\cO_{\varphi}(P)$ does not intersect $V(K_\fp)$. Consequently,
				Question \ref{BM2} has an affirmative answer: $V(K,S)\cap 
				\cC(\cO_{\varphi}(P))=\emptyset$.
			\end{theorem}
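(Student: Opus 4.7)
\emph{Strategy.} The plan is to reduce Theorem \ref{intro_thm1*} to the curve case of Theorem \ref{intro_thm1} via the product decomposition of Proposition \ref{product_of_curves}. The technical assumption enters precisely to ensure that each factor of this decomposition is $\varphi$-invariant, so that a single orbit coordinate entering a factor of $V$ forces it to remain there, allowing a clean reduction to a single curve on which the already-established Theorem \ref{intro_thm1} applies.

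\emph{Reduction and setup.} After replacing $\varphi$ by a suitable iterate $\varphi^N$ (and splitting $\cO_\varphi(P)$ into the finitely many sub-orbits $\cO_{\varphi^N}(\varphi^j(P))$, $0\leq j<N$, each still disjoint from $V(K)$), I may assume $V$ is $\varphi$-invariant. The technical assumption is preserved under this iteration, since any polynomial commuting with an iterate of $f^N$ commutes with an iterate of $f$, hence with $f$, hence with $f^N$. Applying Proposition \ref{product_of_curves} to the $\varphi$-invariant absolutely irreducible variety $V$ gives
\[
V = \prod_{i \in I_V} \{\zeta_i\} \times V_1 \times \cdots \times V_l,
\]
where $\varphi$-invariance forces each $\zeta_i$ to be $f$-fixed and each $V_k$ is an irreducible preperiodic curve in $(\bP^1_{\bK})^{J_k}$ cut out by graphs $x_j = g(x_i)$ with $g$ commuting with an iterate of $f$. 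The technical assumption upgrades each such $g$ to a polynomial commuting with $f$ itself, so each $V_k$ is genuinely $\varphi$-invariant; being the projection of the absolutely irreducible $K$-variety $V$, each $V_k$ is itself absolutely irreducible and defined over $K$.

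\emph{Case analysis.} Write $P_i$ for the $i$-th coordinate of $P$. \textbf{Case 1:} there exists $i \in I_V$ with $\zeta_i \notin \cO_f(P_i)$. If $\zeta_i \notin K$, then Chebotarev supplies infinitely many primes $\fp$ with $\zeta_i \notin K_\fp$, forcing $V(K_\fp)=\emptyset$. If instead $\zeta_i \in K$, Theorem \ref{intro_thm0}(a), applied via the trivial diagonal embedding $P_i \mapsto (P_i,P_i) \in (\bP^1)^2$, yields infinitely many $\fp$ for which the $\fp$-adic closure of $\cO_f(P_i)$ misses $\zeta_i$, whence the closure of $\cO_\varphi(P)$ misses the hypersurface $\{x_i=\zeta_i\} \supseteq V$. \textbf{Case 2:} every $\zeta_i$ is attained by $\cO_f(P_i)$, in particular all $\zeta_i \in K$. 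Replacing $P$ by $\varphi^M(P)$ for sufficiently large $M$ sets all $I_V$-coordinates equal to $\zeta_i$; denote by $Q_k$ the projection of the shifted point to $(\bP^1)^{J_k}$. Since each $V_k$ is $\varphi$-invariant, if every orbit $\cO_{(f,\ldots,f)}(Q_k)$ eventually entered $V_k(K)$ then all would remain there and $\varphi^{M+t}(P)$ would lie in $V(K)$ for all large $t$, contradicting the hypothesis. Thus some $k_0$ satisfies $\cO_{(f,\ldots,f)}(Q_{k_0}) \cap V_{k_0}(K) = \emptyset$; in particular $V_{k_0}$ is a proper subvariety, so $|J_{k_0}| \geq 2$. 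Applying Theorem \ref{intro_thm1} to the absolutely irreducible preperiodic curve $V_{k_0} \subset (\bP^1_K)^{J_{k_0}}$ and the point $Q_{k_0}$ gives infinitely many $\fp$ for which the $\fp$-adic closure of $\cO(Q_{k_0})$ misses $V_{k_0}(K_\fp)$; projecting through $(\bP^1)^n \to (\bP^1)^{J_{k_0}}$ and appending the finite prefix $\{P,\varphi(P),\ldots,\varphi^{M-1}(P)\}$ (disjoint from $V$ by hypothesis) yields the desired conclusion.

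\emph{Main obstacle.} The most delicate point is guaranteeing that each $V_k$ is $\varphi$-invariant, which is what makes the ``stay in $V_k$ once entered'' argument in Case 2 work; without the technical assumption, Proposition \ref{product_of_curves} only gives $\varphi^{N'}$-invariance for some $N'$ possibly larger than the iterate $N$ already absorbed, destroying the synchronization across the factors $V_1,\ldots,V_l$. A secondary subtlety is that the $\bK$-decomposition of $V$ involves points $\zeta_i$ which need not lie in $K$; this is handled inside Case 1 by the sub-case split, with the trivial empty-fiber observation covering $\zeta_i \notin K$.
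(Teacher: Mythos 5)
Your reduction step is the problem. You write that after replacing $\varphi$ by a suitable iterate $\varphi^N$ you ``may assume $V$ is $\varphi$-invariant,'' but this only works when $V$ is \emph{periodic}. If $V$ is preperiodic and not periodic — say $\varphi^{k+M}(V)=\varphi^k(V)$ with $k>0$ and $\varphi(V)\neq V$ — then $\varphi^N(V)\neq V$ for every $N\geq 1$, so no iterate makes $V$ invariant. The statement of Theorem \ref{intro_thm1*} allows arbitrary preperiodic $V$, and this genuinely non-periodic case is exactly where your argument has nothing to say. In the paper the non-periodic case is treated as a separate branch (Step~2 of the proof of Theorem \ref{intro_thm1*}, and Step~2.1 of the proof of Theorem \ref{intro_thm1}), via a wandering argument: when a factor curve $C_1$ is not periodic, the set $\bigcup_{j>0}C_1\cap\varphi_1^j(C_1)$ is finite and a wandering coordinate can visit $C_1(K)$ only finitely often, so the projected orbit can be made disjoint from $C_1$ and one then applies induction. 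Nothing in your proposal replaces that.

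There is a second gap in the same step. Even when $V$ \emph{is} periodic of period $N>1$, after replacing $\varphi$ by $\varphi^N$ you must intersect the prime sets coming from the $N$ sub-orbits $\cO_{\varphi^N}(\varphi^j(P))$, $0\leq j<N$. Theorem \ref{intro_thm1}, the engine of your Case~2, only produces \emph{infinitely many} good primes, not almost all; the intersection of $N$ merely infinite sets of primes can be empty, so the sub-orbit splitting does not automatically glue back together. Relatedly, if you instead try to avoid the iterate entirely and work with the original $\varphi$, the fixed coordinates $\zeta_i$ supplied by Proposition \ref{product_of_curves} are $f$-periodic but need not be $f$-fixed, so ``replacing $P$ by $\varphi^M(P)$ sets all $I_V$-coordinates equal to $\zeta_i$'' fails for subsequent iterates: the point leaves the slice $\{x_i=\zeta_i\}$ and re-enters it periodically. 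Your Case~2 contradiction argument tacitly needs the $\zeta_i$ to be honest fixed points, which again forces the iterate, which reopens the gluing issue. For what it is worth, the core idea of your Case~2 — using the technical assumption to make each $V_k$ $\varphi_k$-invariant, observing that at least one $V_{k_0}$ must be permanently missed, and then invoking the curve case Theorem \ref{intro_thm1} directly rather than running a full induction on $n$ as the paper does — is a nice and genuinely different route for the periodic case, and would survive if you patched the reduction. The simplest fix is to drop the ``make $V$ invariant'' reduction, keep a separate argument for the non-periodic preperiodic case along the lines of the paper's Step~2.1 (wandering plus finiteness of $\bigcup_j C_1\cap\varphi_1^j(C_1)$), and in the periodic case handle the non-fixed $\zeta_i$ directly rather than by iterating $\varphi$.
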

			
			\begin{remark}
			The above technical assumption holds for a generic $f$. 
			In fact, let $M(f^\infty)$ denote the group of linear
			polynomials commuting with an iterate of $f$. By Proposition 
			\ref{prop:all_g}, if $M(f^\infty)$ is trivial
			then the technical assumption in Theorem \ref{intro_thm1*}
			holds. 
			When $f$ has degree 2
			and is not conjugate to $X^2$, we have that $M(f^\infty)$ is trivial.
			When $f$ has degree at least 3, after making a linear
			change, we can assume:
			$$f(x)=X^d+a_{d-2}X^{d-2}+a_{d-3}X^{d-3}+\ldots+a_0.$$
			It is easy to prove
			that when $a_{d-2}a_{d-3}\neq 0$, 
			the group $M(f^\infty)$ is trivial.
			\end{remark}

      In the next subsection, we will give all the preliminary 
			results
			needed for the proofs of the above Theorems
			as well as a proof of Theorem \ref{intro_thm0}. 
						
			\subsection{An Assortment of Preliminary Results}\label{Prelim}
			Our first lemma shows that in order to prove 
			Theorems \ref{intro_thm0}--\ref{intro_thm1*}, we are free to
			replace $K$ by a finite extension.
			
			\begin{lemma}\label{ext}
			Let $L$ be a finite extension of $K$, $X$ a variety over $K$,
			$\varphi$ a $K$-endomorphism of $X$, $V$ a closed subvariety of 
			$X$ over $K$, and $P$ an element of $X(K)$. Let $\fp$ be a prime
			of $K$ and $\fq$ a prime
			of $L$ lying above $\fp$. If $V(L_\fq)$ does not intersect the
			$\fq$-adic closure of $\cO_\varphi(P)$ in $X(L_\fq)$
			then $V(K_\fp)$ does not intersect the $\fp$-adic closure
			of $\cO_\varphi(P)$ in $X(K_\fp)$.
			\end{lemma}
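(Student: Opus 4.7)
The plan is to exploit the natural continuous embedding $X(K_\fp) \hookrightarrow X(L_\fq)$ induced by the inclusion of completions $K_\fp \hookrightarrow L_\fq$, and then to transport any would-be point in $V(K_\fp)\cap \cC(\cO_\varphi(P))$ into $V(L_\fq)\cap \cC(\cO_\varphi(P))$ to contradict the hypothesis.

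First I would record two set-theoretic compatibilities. Since $V$ is defined over $K\subseteq L$, one has $V(K_\fp)=V(L_\fq)\cap X(K_\fp)$; in particular $V(K_\fp)\subseteq V(L_\fq)$. Since $P\in X(K)\subseteq X(K_\fp)\cap X(L_\fq)$ and $\varphi$ is defined over $K$, the orbit $\cO_\varphi(P)$ is \emph{literally the same} subset of both $X(K_\fp)$ and $X(L_\fq)$, so no relabelling is needed.

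Next I would verify that the inclusion $\iota\colon X(K_\fp)\to X(L_\fq)$ is continuous with respect to the relevant $v$-adic and $w$-adic topologies. On an affine chart $U\subseteq X$ this is immediate, because the topology on $U(K_\fp)$ is the subspace topology coming from an embedding into some $K_\fp^N$, and the map $K_\fp^N\to L_\fq^N$ is continuous (even an isometric embedding for compatible absolute values). Because $X$ is separated, these local inclusions glue to a continuous map on all of $X(K_\fp)$.

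Finally, the contradiction: suppose $Q\in V(K_\fp)$ lies in the $\fp$-adic closure of $\cO_\varphi(P)$ in $X(K_\fp)$, so some sequence in $\cO_\varphi(P)$ converges to $Q$ in the $\fp$-adic topology. Applying $\iota$, the same sequence converges to $Q=\iota(Q)$ in $X(L_\fq)$; combined with $Q\in V(K_\fp)\subseteq V(L_\fq)$, this places $Q$ in $V(L_\fq)\cap \cC(\cO_\varphi(P))$, contradicting the assumption. There is no real obstacle here: the lemma is purely a topological compatibility statement, and the only point requiring mild care is the global continuity of $\iota$, which follows from separatedness of $X$ once it is checked on an affine cover.
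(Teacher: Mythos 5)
Your proof is correct and spells out exactly the routine topological argument the author had in mind; the paper's own proof of this lemma consists of the single word ``Clear.'' The essential points you isolate --- that $K_\fp\hookrightarrow L_\fq$ is a topological embedding since $\fq\mid\fp$, that this induces a continuous map $X(K_\fp)\to X(L_\fq)$ by checking on affine charts, that $V(K_\fp)\subseteq V(L_\fq)$ and $\cO_\varphi(P)$ is the same subset in both, and that continuity sends the $\fp$-adic closure into the $\fq$-adic closure --- are precisely what is needed, and no step is missing.
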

			\begin{proof}
			Clear.
			\end{proof}


			Before stating the next result, we need some terminology. Let
			$\fp$ be a prime of $K$, $\scrX$ a
			separated scheme of finite type over $\scrO_\fp$. By 
			the valuative criterion of separatedness \cite[p. 97]{Hartshorne},
			we could view $\scrX(\scrO_\fp)$ as a subset of
			of $\scrX(K_\fp)$, then the 
			$\fp$-adic topology on $\scrX(\scrO_\fp)$
			is the same as the subspace topology induced
			by the $\fp$-adic topology on $\scrX(K_\fp)$. Every point
			$P\in\scrX(\scrO_\fp)$ is an $\scrO_\fp$-morphism
			$\Spec(\scrO_\fp)\longrightarrow \scrX$. By the generic point
			and closed point of $P$, we mean the image of
			the generic point and closed point 
			of $\Spec(\scrO_\fp)$, respectively. We write
			$\bar{P}$ to denote its closed point, which is also
			identified to the corresponding element in $\scrX(k_\fp)$.
			The scheme $\scrX$ is said to be smooth at $P$
			if the structural morphism $\scrX\longrightarrow\Spec(\scrO_\fp)$
			is smooth at $\bar{P}$. 
			Similarly, an
			endomorphism $\varphi$ of $\scrX$ over $\scrO_\fp$
			is said to be \'etale at $P$ if it is
			\'etale at $\bar{P}$. 
			The following is essentially a main result of \cite[Theorem 4.4]{AKNTVV}:	
				\begin{theorem}\label{etale}
					Let $K$, $\fp$, $\scrX$, and $P\in\scrX(\scrO_\fp)$
					 be as in the last paragraph. 
					Let $\varphi$ be an endomorphism of $\scrX$ over $\scrO_\fp$.
					Assume that
					$\scrX$ is smooth and $\varphi$ is \'etale
					at every point in the orbit $\cO_{\varphi}(P)$. Let
					$\scrV$ be a reduced closed subscheme of
					$\scrX$. Assume one of the following sets of conditions:
					\begin{itemize}
					\item [(a)] There exists
					$M>0$ satisfying 
					$\varphi^{M}(\scrV)\subseteq\scrV$.
					When $K$ is a function field, we
					assume that $P$ is $\varphi$-preperiodic modulo $\fp$ 
					. 
					\item [(b)] $\scrV$ is a finite
					set of preperiodic points of $\scrX(\scrO_\fp)$. 				
				\end{itemize}
					
				We have: if $\scrV(\scrO_\fp)$
					does not intersect $\cO_{\varphi}(P)$
					then it does not intersect the $\fp$-adic closure of
					$\cO_{\varphi}(P)$.
				\end{theorem}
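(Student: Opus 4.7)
The plan is to reduce both cases to a unified setup and then apply a $\fp$-adic analytic interpolation of the orbit.

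First, I would observe that case (b) reduces to case (a): if the $\fp$-adic closure of $\cO_\varphi(P)$ meets the finite set $\scrV(\scrO_\fp)=\{Q_1,\ldots,Q_r\}$ (otherwise there is nothing to show), then some $\bar Q_i$ is the reduction of infinitely many orbit points, hence is $\bar\varphi$-periodic, and $\bar P$ is automatically preperiodic modulo $\fp$. Choosing $M$ so that $\varphi^M$ fixes each $Q_i$ gives $\varphi^M(\scrV)\subseteq\scrV$, placing us in case (a) with the function-field preperiodicity hypothesis satisfied for free. For case (a), writing $N$ and $L$ for the pre-period and eventual period of $\bar P$ under $\bar\varphi$, I would replace $\varphi$ by $\psi:=\varphi^{ML}$ and $P$ by $\varphi^N(P)$. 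After this reduction, $\bar\psi$ fixes $\bar P$, the map $\psi$ is \'etale at every point of $\cO_\psi(P)$, $\psi(\scrV)\subseteq\scrV$, and the orbit $\{\psi^k(P)\}_{k\geq 0}$ lies entirely in the residue disk of $\bar P$.

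The crucial step is to interpolate this integer-indexed orbit by a $\fp$-adic analytic family. Using smoothness of $\scrX$ at $\bar P$, the residue disk of $\bar P$ is identified with the polydisk $\fm_\fp^d$ where $d=\dim\scrX$, and the \'etaleness of $\psi$ at $\bar P$ makes $\psi$ an analytic self-map of this polydisk with invertible linear part at the origin. After passing to a further iterate of $\psi$ to gain sufficient $\fp$-adic contraction, the construction in \cite[Theorem 4.4]{AKNTVV} produces a $\fp$-adic analytic arc $\gamma$ interpolating the orbit, namely an analytic map from a suitable $\fp$-adic parameter disk $\cU$ into the residue disk of $\bar P$, which recovers the orbit on a dense subset of $\cU$ and intertwines an analytic shift on $\cU$ with $\psi$. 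I would invoke this construction rather than redo the linearization estimates; it is the heart of the proof.

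The remainder is a formal rigidity argument. Any point $Q\in\scrV(\scrO_\fp)$ lying in the $\fp$-adic closure of the orbit must lie in $\gamma(\cU)$; more precisely, extracting a convergent subsequence of parameter values shows $Q=\gamma(t)$ for some $t\in\cU$. The preimage $Z=\gamma^{-1}(\scrV(\scrO_\fp))$ is a $\fp$-adic analytic subset of $\cU$, locally cut out by pullbacks of defining equations of $\scrV$, and it is invariant under the shift on $\cU$ because the intertwining of $\gamma$ with $\psi$ combined with $\psi(\scrV)\subseteq\scrV$ forces shift-invariance. It contains $t$, hence the forward shift-orbit of $t$, which is dense in $\cU$; $\fp$-adic rigidity then forces $Z=\cU$, so $\psi^k(P)\in\scrV$ for every $k\geq 0$, contradicting the hypothesis. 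The main obstacle is the construction of $\gamma$ itself; everything after that is a formal consequence of $\fp$-adic analyticity and the invariance $\psi(\scrV)\subseteq\scrV$.
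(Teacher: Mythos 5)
Your treatment of case~(a) is essentially the paper's: after reducing to a single residue disk by passing to $\varphi^N(P)$ and to the iterate $\varphi^{ML}$, one invokes the $\fp$-adic analytic interpolation of the orbit and the accompanying rigidity argument from \cite[Theorem 4.4]{AKNTVV}. The paper simply cites that theorem and observes that its proof carries over verbatim even though the published statement assumes smoothness and \'etaleness everywhere rather than just along the orbit; your walkthrough of the arc construction and the analyticity/shift-invariance argument is a faithful expansion of the same route, not a different one.

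The reduction of (b) to (a), however, has a genuine gap. You write ``Choosing $M$ so that $\varphi^M$ fixes each $Q_i$ gives $\varphi^M(\scrV)\subseteq\scrV$.'' This requires each $Q_i$ to be \emph{periodic}, but the hypothesis of (b) only says the $Q_i\in\scrX(\scrO_\fp)$ are \emph{preperiodic}: a strictly preperiodic $Q_i$ satisfies $\varphi^M(Q_i)\neq Q_i$ for every $M>0$, and $\varphi^M(Q_i)$ need not lie in $\scrV$ at all, so no choice of $M$ yields $\varphi^M(\scrV)\subseteq\scrV$. Your observation that $\bar Q_i$ becomes periodic mod~$\fp$ does not lift to periodicity of $Q_i$ in $\scrX(\scrO_\fp)$. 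The paper's fix is to replace $\scrV$ by the enlarged set $\scrV_1=\bigcup_{i\geq 0}\varphi^i(\scrV)$, which is still finite (since the $Q_i$ are preperiodic), satisfies $\varphi(\scrV_1)\subseteq\scrV_1$, and still avoids $\cO_\varphi(P)$ — for if the orbit met $\scrV_1$ then $P$ would be preperiodic, in which case $\cO_\varphi(P)$ is finite, hence $\fp$-adically closed, and there is nothing to prove. One then argues exactly as you do: if the mod-$\fp$ orbit of $\bar P$ misses $\bar\scrV_1$ we are done trivially, and otherwise $\bar P$ is preperiodic mod~$\fp$, placing us squarely in case~(a) applied to $\scrV_1$ (with $M=1$). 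Your proposal would go through once ``choose $M$ fixing the $Q_i$'' is replaced by this enlargement step.
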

				\begin{proof}
					First assume the conditions in (a). 
					Although the statement in \cite[Theorem 4.4]{AKNTVV}
					includes smoothness of $\scrX$ and \'etaleness of $\varphi$
					everywhere, its proof could  actually be carried verbatim here. 
					
					Now assume the conditions in (b). Define:
					$$\scrV_1=\bigcup_{i=0}^{\infty} \varphi^i(\scrV).$$
					Then $\scrV_1$ is a finite set of points in $\scrX(\scrO_\fp)$
					satisfying $\varphi(\scrV_1)\subseteq \scrV_1$.
					If the orbit of $P$ intersects $\scrV_1(\scrO_\fp)$
					then $P$ is preperiodic and there is nothing to prove.
					So we may assume otherwise. After reducing mod $\fp$, if 
					the orbit of
					$P$ does not intersect $\scrV_1$ then there is nothing
					to prove. So we may assume otherwise, and this assumption
					gives that $P$ is preperiodic mod $\fp$. All
					the conditions in part (a) are now satisfied, and we can
					get the desired conclusion.
				\end{proof}
				
				We remind the readers that 
			if $K$ is a function field over the constant field $\kappa$, a
			rational function $f\in K(X)$ is said to be isotrivial
			if there exists a fractional linear map $L\in\Aut(\bP^1(\bar{K}))$
			such that $L^{-1}\circ f\circ L\in \bar{\kappa}(X)$. The Silverman-Voloch trick mentioned right 
			after
				Theorem \ref{intro_thm0} is the following (see \cite{IS} for all the terminology):
				\begin{lemma}\label{good_p}
					Let $f\in K[X]$ be a polynomial of degree at least 2, and
					let $\alpha\in K$ such that the canonical height
					$\hat{h}_f(\alpha)$ is positive.
					Let $\gamma\in K$ be a periodic point of $f$ such that
					$f$ is not of polynomial type at $\gamma$ 
					.
					Then there are
					infinitely many primes $\fp$ of $K$ such that
					$v_{\fp}(f^{\mu}(\alpha)-\gamma)>0$ for some $\mu$
					depending on $\fp$.
				\end{lemma}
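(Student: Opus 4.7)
The plan is to carry out the local-global argument of Silverman--Voloch \cite{SilVo} in our polynomial setting.

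\emph{Step 1 (reduction to a fixed point).} Replace $f$ by its iterate $f^m$, where $m$ is the exact period of $\gamma$, so that $\gamma$ becomes a fixed point of the new map. All hypotheses persist: the canonical height scales as $\hat h_{f^m}(\alpha)=m\,\hat h_f(\alpha)>0$; the property ``not of polynomial type at $\gamma$'' passes to iterates, since a totally ramified periodic point remains totally ramified under iteration; and $\cO_{f^m}(\alpha)\subseteq\cO_f(\alpha)$, so any prime witnessing the conclusion for $f^m$ also witnesses it for $f$.

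\emph{Step 2 (local factorization at $\gamma$).} With $\gamma$ now fixed, factor in $K[X]$:
$$f(X)-\gamma\;=\;(X-\gamma)^r\,h(X),\qquad h(\gamma)\neq 0,$$
where $r$ is the ramification index of $f$ at $\gamma$. The hypothesis ``not of polynomial type at $\gamma$'' translates precisely to $r<d$, so $h$ has positive degree $d-r\geq 1$. Setting $u_n:=f^n(\alpha)-\gamma$, the relation $u_{n+1}=u_n^{r}\,h(f^n(\alpha))$ iterates to
$$u_n\;=\;u_0^{\,r^n}\prod_{i=0}^{n-1}h(f^i(\alpha))^{\,r^{n-1-i}}.$$

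\emph{Step 3 (primes must accumulate).} Argue by contradiction: suppose the set $T=\{\fp:v_\fp(u_n)>0\text{ for some }n\}$ were finite. Enlarge $T$ by the finite set of primes of bad reduction for $f$ and of non-integrality for $\alpha$ and $\gamma$. For any ``good'' prime $\fp\in T$, the recursion above combined with $h(\gamma)\neq 0$ shows that once $v_\fp(u_n)>0$ for some smallest $n=n_\fp$, the condition $f^n(\alpha)\equiv\gamma\pmod{\fp}$ forces $v_\fp(h(f^n(\alpha)))=0$ for all $n\geq n_\fp$, so $v_\fp(u_n)=r^{n-n_\fp}v_\fp(u_{n_\fp})$. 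Thus the total non-archimedean contribution to the height of $u_n$ coming from $T$ is bounded by $O(r^n)$. On the other hand, canonical-height theory yields $h(u_n)=d^n\hat h_f(\alpha)+O(1)$, and via the product formula this growth must be absorbed (up to archimedean contributions that are themselves controlled by $f$-local heights and so also of order $d^n$ distributed across places) by the finite-prime contributions tracked above. Since $r<d$, this is impossible for $n$ large, giving the desired contradiction.

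The main obstacle is the final height bookkeeping in Step 3: one must cleanly isolate how the canonical-height growth $d^n\hat h_f(\alpha)$ is distributed among archimedean and non-archimedean places and show that a bound of $O(r^n)$ on the non-archimedean ``attracting'' contribution is inconsistent with it, uniformly in the number-field and function-field cases (in the latter the local heights at $v\in M_K$ play the role of the archimedean contribution). The strict inequality $r<d$ is indispensable: in the totally ramified case $r=d$ the identity degenerates to $u_n=\mathrm{const}\cdot u_0^{d^n}$ and only the primes dividing $u_0$ can ever appear, so the conclusion is manifestly false and the ``not of polynomial type'' hypothesis cannot be dropped.
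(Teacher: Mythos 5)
Your Steps 1 and 2 are sound and are, in spirit, the same opening moves made in the Ingram--Silverman proof that the paper cites. The genuine gap is in Step 3, and it is exactly where you flag ``the main obstacle.'' The product formula gives
$h(u_n)=h(1/u_n)=\sum_{\fp\in T}v_\fp(u_n)\log N\fp+\sum_{v\mid\infty}\log^+\left(1/|u_n|_v\right),$
and your bookkeeping shows the first sum is $O(r^n)$; but the second (archimedean) sum is \emph{not} bounded by $O(r^n)$ on the basis of anything you have said. A Liouville-type bound only gives $\log^+(1/|u_n|_v)\leq [K:\Q]\,d^n\hat h_f(\alpha)+O(d^n)$, so your parenthetical ``of order $d^n$ distributed across places'' is not a resolution of the problem but a concession of it: an archimedean contribution growing like $d^n$ absorbs the entire height growth and destroys the contradiction. (In the function-field case there is no archimedean place and your argument does close cleanly; the number-field case is where it breaks.) Ruling out that the orbit $f^n(\alpha)$ can approximate $\gamma$ with error as small as $e^{-c\,d^n}$ at an archimedean place is precisely the diophantine-approximation input --- a Roth-type theorem for orbits --- and nothing in your proposal supplies it.

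This is also the real point of divergence from the paper: the paper does not reprove the statement at all but invokes the Ingram--Silverman primitive-prime-divisor theorem \cite[p.~292]{IS}, which has the archimedean control built into its proof, and then observes that in the function-field isotrivial case (where Ingram--Silverman impose non-isotriviality) the hypothesis $\hat h_f(\alpha)>0$ is what is actually used. If you want a self-contained argument you would need to add that diophantine ingredient; otherwise, the efficient route is exactly the paper's, namely to cite \cite{IS} and note the isotriviality caveat.
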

				\begin{proof}
					This follows from the deeper result of Ingram-Silverman
					\cite[p. 292]{IS} that almost all elements of the sequence
					$(f^{\mu}(\alpha)-\gamma)$ have a primitive
					divisor. In the function field case, Ingram and Silverman
					require that $f$ is not isotrivial \cite[Remark 4]{IS}. However,
					even when $f$ is isotrivial, what is really needed in the proof of their result
					is that $\hat{h}_f(\alpha)>0$.					
				\end{proof}
				
				\begin{remark}
					If the exact $f$-period of $\gamma$ is greater than 2 then $f$ is not 
					of polynomial type at $\gamma$ \cite[Remark 6]{IS}.
				\end{remark}
				
				We can use Lemma \ref{good_p} to prove the following: 
				\begin{lemma}\label{Trick}
					Let $f$ be as in Lemma \ref{good_p}. Let $\alpha$ be an element
					of $\bP^1(K)$ and $V$ a finite subset of $\bP^1(K)$
					such that the orbit of $\alpha$ does not intersect $V$.
					Then there are infinitely many primes $\fp$ such that
					the $\fp$-adic closure of the orbit of $\alpha$ 
					does not intersect $V$. 
				\end{lemma}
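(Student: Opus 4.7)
The plan is as follows. First I dispose of the trivial case: if $\alpha$ is $f$-preperiodic (in particular if $\alpha=\infty$), then $\cO_f(\alpha)$ is a finite subset of $\bP^1(K)$, hence closed in the Hausdorff space $\bP^1(K_\fp)$ for every prime $\fp$, so the conclusion holds for every $\fp$. Henceforth I assume $\alpha\in K$ is not preperiodic, so that $\hhat_f(\alpha)>0$, which is exactly the hypothesis needed to invoke Lemma \ref{good_p}.

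My strategy is to pick a single auxiliary periodic point $\gamma$ of $f$ whose orbit misses $V$, use Lemma \ref{good_p} to produce infinitely many primes $\fp$ at which some iterate $f^\mu(\alpha)$ enters the residue disc of $\gamma$, and then show by a short ultrametric chase that the rest of the orbit is trapped in the residue discs of $\cO_f(\gamma)$, which are separated from $V$ modulo $\fp$ for almost all $\fp$. Concretely, invoking Lemma \ref{ext} to enlarge $K$ if necessary, I choose $\gamma\in K$ of exact $f$-period $\ell\geq 3$ with $\cO_f(\gamma)\cap V=\emptyset$. Such a $\gamma$ exists because $f$ admits infinitely many periodic orbits of period at least $3$ in $\bar K$, while at most $|V|$ of them can contain an element of $V$. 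Since $\gamma$ has period at least $3$, $f$ is not of polynomial type at $\gamma$ by the remark following Lemma \ref{good_p}, and Lemma \ref{good_p} supplies an infinite set $\cP$ of primes of $K$ together with integers $\mu=\mu(\fp)\geq 0$ satisfying $v_\fp(f^\mu(\alpha)-\gamma)>0$.

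I then throw out the finitely many $\fp\in\cP$ for which $f$ has bad reduction, or $v_\fp(\alpha)<0$, or $v_\fp(\beta)<0$ for some $\beta\in V\cap K$, or $v_\fp(\beta-f^i(\gamma))>0$ for some $\beta\in V\cap K$ and some $0\leq i<\ell$; the last exclusion is finite precisely because $\beta\neq f^i(\gamma)$ by construction. For each remaining $\fp$, good reduction of $f$ gives by induction on $k$ that $v_\fp(f^{\mu+k}(\alpha)-f^k(\gamma))>0$, so for every $n\geq\mu$ the value $f^n(\alpha)$ lies in the residue disc of $f^{(n-\mu)\bmod\ell}(\gamma)$. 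The ultrametric triangle inequality now forces $v_\fp(f^n(\alpha)-\beta)=0$ for every $\beta\in V\cap K$ and every $n\geq\mu$; similarly $v_\fp(f^n(\alpha))\geq 0$ for all $n$ keeps the orbit away from the residue disc of $\infty$ if $\infty\in V$. Only finitely many orbit points $f^n(\alpha)$ with $n<\mu$ remain, and none of them equals any element of $V$ by hypothesis, so no $\beta\in V$ can be an $\fp$-adic accumulation point of $\cO_f(\alpha)$, and the $\fp$-adic closure of the orbit avoids $V(K_\fp)$.

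The only nontrivial input in this plan is Lemma \ref{good_p} itself, which rests on the Ingram--Silverman primitive divisor theorem; it is used exactly once, to guarantee that the orbit of $\alpha$ actually collides with the residue class of $\gamma$ for infinitely many primes. Everything after this single collision is a routine ultrametric/good-reduction calculation, so the main obstacle is really concentrated in the careful choice of $\gamma$ and in that one application of Lemma \ref{good_p}.
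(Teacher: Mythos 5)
Your outline tracks the paper's argument closely — dispose of the preperiodic case, choose a periodic $\gamma$ of exact period $\geq 3$ whose orbit avoids $V$, invoke Lemma \ref{good_p} to get infinitely many primes where some iterate of $\alpha$ enters the residue disc of $\gamma$, then trap the tail of the orbit by an ultrametric/good-reduction chase. That matches the paper's proof.

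However, there is a genuine gap at the very first reduction. You assert that if $\alpha$ is not $f$-preperiodic then $\hhat_f(\alpha)>0$, ``which is exactly the hypothesis needed to invoke Lemma \ref{good_p}.'' That implication holds over a number field, but it fails over a function field: when $f$ is isotrivial, a wandering point can still have zero canonical height, and $K$ in this paper is allowed to be a function field. Lemma \ref{good_p} genuinely requires $\hhat_f(\alpha)>0$, and cannot be applied in this case. The paper handles the missing case separately: if $\hhat_f(\alpha)=0$ with $\alpha$ wandering, then (by Benedetto's theorem) $K$ is a function field and $f$ is isotrivial; after a finite extension of $K$ and a linear change of coordinates one may assume $f\in\kappa[X]$ and $\alpha\in\kappa$, whereupon the entire orbit lies in $\kappa\subseteq\scrO_\fp$, its points are pairwise non-congruent modulo every $\fp$ (distinct constants reduce to distinct constants), hence the orbit is discrete and already closed $\fp$-adically for every prime $\fp$. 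Your proof needs this extra branch to be complete.
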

				\begin{proof}
					If $\alpha$ is preperiodic, there is nothing to prove. 
					We
					assume that $\alpha$ is wandering. If 
					$\hat{h}_f(\alpha)=0$, we must have that $K$ is a function field and $f$ is
					isotrivial \cite{Bene}. After replacing $K$ by a finite extension, and 
					making a linear change, we may assume that $f\in \kappa[X]$ and $\alpha\in \kappa$.
					Now the conclusion of the lemma is obvious since the orbit of
					$\alpha$ is discrete in the $\fp$-adic topology for every $\fp$. 
				 
				 What remains now is the case $\hat{h}_f(\alpha)>0$.
					 For almost
					all $\fp$, we have $v_{\fp}(\alpha)\geq 0$ and $f\in\scrO_{\fp}[X]$,
					so $v_\fp(f^m(\alpha))\geq 0$ for all $m$. Therefore we can assume
					$\infty\notin V$. Let $u_1,...,u_q$ be all elements of $V$. By Lemma
					\ref{ext}, we can assume there is a periodic point
					$\gamma\in K$ of exact period at least 3 and the orbit 
					of $\gamma$ does not contain $u_i$ for $1\leq i\leq q$.
					Hence there is a finite set of primes $T$ such that: 
					\begin{equation}\label{trick_1}
				 		\text{$f\in\scrO_{\fp}[X]$ and
				  $v_{\fp}(u_i-f^{m}(\gamma))=0\ \forall m\geq 0\ \forall 1\leq i\leq q$ 
				  $\forall\fp\notin T$.}
				 	\end{equation}
					Now by Lemma \ref{good_p}, there are infinitely many primes 
				 	$\fp\notin T$ such that:
				 	\begin{equation}\label{trick_2}
				 		\text{$v_\fp(f^{\mu}(\alpha)-\gamma)>0$ for some $\mu=\mu_\fp$}
				 	\end{equation}  
				  Fix any $\fp\notin T$ that gives (\ref{trick_2}), write $\mu=\mu_\fp$.
				  Thus $v_\fp(f^m(\alpha)-f^{m-\mu}(\gamma))>0$ for every $m\geq \mu$.
				  Together with (\ref{trick_1}), we have $v_\fp(f^m(\alpha)-u_i)=0$
				  for every $m\geq \mu$ and $1\leq i\leq q$. This implies that
				  $V$ does not intersect the $\fp$-adic closure
				  of the $f$-orbit of $\alpha$.				
				\end{proof}
				
				Now
				we have all the results needed to prove
				Theorem \ref{intro_thm0}:
				 	
				 	\noindent\textit{Proof of Theorem \ref{intro_thm0}}:
				 	By Lemma \ref{ext}, we can replace $K$ by a finite extension
				 	so that $V$ is a finite set of points in $(\bP^1)^n(K)$.
				 	
				 	For part (a), note that if $P$ is $\varphi$-preperiodic then there is 
				 	nothing to prove, hence we can assume $P$ is wandering. Write 
				 	$P=(\alpha_1,\ldots,\alpha_n)$, without loss of generality, we assume $\alpha_1$
				 	is wandering with respect to $f$. Let $U$
				 	denote the finite subset of $\bP^1(K)$ consisting
				 	of the first coordinates of points in $V$. There is the largest
				 	$N$ such that $f^N(\alpha_1)\in U$. We simply
				 	replace $P$ by $\varphi^{N+1}(P)$
				 	and assume the $f$-orbit of $\alpha_1$ does
				 	not contain any element of $U$. Then our conclusion
				 	follows from Lemma \ref{Trick}.

				 	Part (b) follows easily from part (a). As before, we can 
				  assume $P$ is
				  not preperiodic, hence 
				  there is the largest $N$ such that $\varphi^N(P)\in V(K)$.
				  Replacing $P$ by $\varphi^{N+1}(P)$, we can assume that 
				  $V(K)\cap \cO_\varphi(P)=\emptyset$, then part (a)
				  implies 
				  $$V(K,S)\cap\cC(\cO_{\varphi}(P))=\emptyset=V(K)\cap \cO_{\varphi}(P).$$

				 	For part (c), we first consider the case $L\circ f\circ L^{-1}=X^d$
				 	for some linear polynomial $L\in \bar{K}[X]$. 
				 	By extending $K$, we may assume  
				 	$L\in K[X]$. Since $L$ yields a homeomorphism 
				 	from $\bP^1(K_\fp)$ to itself for almost all $\fp$, we may
				 	assume $f(X)=X^d$. As before, we can assume the first coordinate 
				 	$\alpha_1$ of $P$ is wandering. Since $U$
				 	contains only $f$-preperiodic points (by preperiodicity of $V$), the $f$-orbit of $\alpha_1$ 
				 	does not contain any element of $U$. 
					For almost all $\fp$,
				 	the first coordinates of points in the $\varphi$-orbit of $P$ is
				 	a $\fp$-adic unit. Therefore we can exclude from $V$ all the points
				 	having first coordinates $0$ or $\infty$, hence $U\subseteq \Gm(K)$. 
				 	Let $\fp$ be a prime not dividing $d$ such that $\alpha_1$ 
				 	and all elements of $U$ are $\fp$-adic units. We now apply
				 	Theorem 2.10 for $\scrX=\Gm$ over $\scrO_\fp$, 
				 	$\scrV=U\subseteq\Gm(\scrO_\fp)$, the self-map being the 
				 	multiplication-by-$d$ 
				 	map, and the orbit of $\alpha_1$. Since the $\fp$-adic closure of
				 	the orbit of $\alpha_1$ does not intersect $U(K_\fp)$,
				 	the $\fp$-adic closure of $P$ does not intersect $V(K_\fp)$.  
				  
				  For the case $L\circ f\circ L^{-1}=\pm C_d(X)$, we use the 
				  self-map of $(\bP^1)^n$ given by:
				  $$(x_1,\ldots,x_n)\mapsto(x_1+\frac{1}{x_1},\ldots,x_n+\frac{1}{x_n})$$
				  to reduce to the case that $f$ is conjugate to $\pm X^d$ 
				  which has just been treated. This finishes the proof of Theorem
				  \ref{intro_thm0}.


				 \subsection{Proof of Theorem \ref{intro_thm1}}\label{disint_case} 		
				 Let $f\in K[X]$ be a disintegrated polynomial
				 of degree
				 $d\geq 2$. 
				 By Theorem \ref{inv_curves}, for every preperiodic hypersurface
				 $H$ of $(\bP^1_K)^n$, there exist $1\leq i<j\leq n$
				 such that $H=\pi^{-1}(C)$
				 where $\pi$ denotes the projection onto the $(i,j)$-factor
				 and $C$ is an $(f,f)$-preperiodic curve of $(\bP^1_K)^2$.
				 Therefore it suffices to prove Theorem \ref{intro_thm1}
				 when $V$ is a curve. 
				 
				 Let $\varphi$, $V$ and $P\in (\bP^1)^n(K)$ such that
				 $V(K)\cap \cO_{\varphi}(P)=\emptyset$ and
				 $\dim(V)=1$ as
				 in Theorem \ref{intro_thm1} and the discussion in the last paragraph.
					Let $I$ and $I_V$ be as in Proposition \ref{product_of_curves}.
	       Let $\tilde{f}$,
				 $M(f^\infty)$, and $D=D_f$ be as in Proposition 
				 \ref{prop:all_g}.
				 Now we prove that there are infinitely many primes $\fp$ 
				 of $K$ such that $V(K_\fp)$ does not intersect the 
				 $\fp$-adic closure of $\cO_\varphi(P)$. By Lemma \ref{ext},
				 we can assume $\tilde{f}$ and all elements
				 in $M(f^\infty)$ have coefficients in $K$.  
				 
				 \textbf{Step 1:} We first consider the
				 case $V$ is periodic.

		%
				  
				  \textbf{Step 1.1:} we assume that $I_V=\emptyset$. By Theorem
				  \ref{inv_curves}, Proposition
				  \ref{product_of_curves} and the discussion before it, we can relabel
				  the factors of $(\bP^1)^n$ and rename the coordinate
				  functions of all the factors as $x$, $y_1,\ldots,y_{n-1}$  
				  such that $V$ is given by the equations: $y_i=g_i(x)$
				  for $1\leq i\leq n-1$, where $g_i$ commutes with an iterate of
				  $f$ for $1\leq i\leq n-1$ and $\deg(g_1)\leq\ldots\leq\deg(g_{n-1})$.
				  Write $P=(a,b_1,\ldots,b_{n-1})$.


				  \textbf{Step 1.1.1:} we consider the easy case that $a$
				  is $f$-preperiodic. 
				  Replacing $P$ by an iterate, we can assume that 
				  $a$ is $f$-periodic of exact period $N$. 
				  The $\varphi$-orbit of $P$ is:
				  $$\{(f^i(a),f^{i+tN}(b_1),\ldots,f^{i+tN}(b_{n-1})):\ t\geq 0,\ 
				  0\leq i<N\}.$$
				  Since this orbit does not intersect $V(K)$, we have
				  \begin{equation}\label{eq:logic}
				  \forall t\geq 0\ \forall 0\leq i<N\ \exists 1\leq j\leq n-1\ 
				  \left(f^{i+tN}(b_j)\neq g_j(f^i(a))\right).
				  \end{equation}
				   
				  For each $0\leq i<N$ and $1\leq j\leq n-1$, denote
				  $\cB_{i,j}=(f^i)^{-1}(\{g_j(f^i(a))\})$. 
				  Denote 
				  $$\cB=\bigcup_{0\leq i<N}\cB_{i,1}\times\ldots\times\cB_{i,n-1}$$
				  which is a finite set of (preperiodic) points of $(\bP^1)^{n-1}$.
				  Let $b=(b_1,\ldots,b_{n-1})$
				  and let $\phi$ denote the self-map $(f,\ldots,f)$
				  of $(\bP^1)^{n-1}$. By (\ref{eq:logic}), we have 
				  $\phi^{tN}(b)\notin \cB$ for every $t\geq 0$. 
				  By Theorem \ref{intro_thm0}, there exist
				  infinitely many primes $\fp$ such that the $\fp$-adic closure
				  $\cC_{\fp}$ of $\{\phi^{tN}(b):\ t\geq 0\}$
				  does not intersect $\cB$.
				  For each such $\fp$, the $\fp$-adic closure of
				  the orbit of $P$ lies in:
				  $$\bigcup_{0\leq i<N} \{f^i(a)\}\times \phi^i(C_\fp)$$
				  which is disjoint from $V(K_\fp)$.

					\textbf{Step 1.1.2:} we consider the case $a$ is $f$-wandering
					and $\hat{h}_f(a)=0$. Hence $K$ is
					a function field
					and $f$ is isotrivial by \cite{Bene}. After replacing $K$ by a finite
					extension, and making a linear change, we may assume that
					$f\in \kappa[X]$ and $a\in\kappa$. If $\hat{h}_{f}(b_i)=0$
					for every $1\leq i\leq n-1$, then $b_i\in\kappa$ for every
					$1\leq i\leq n-1$. Then the orbit $\cO_{\varphi}(P)$
					is discrete in the $\fp$-adic topology for every $\fp$, and the 
					conclusion of the theorem is obvious. Hence we assume that
					there is $1\leq j\leq n-1$ such that $\hat{h}_f(b_j)>0$.
					Replacing $K$ by a finite extension if necessary, we assume that there
					is an $f$-periodic $\gamma\in \kappa$ 
					of exact period at least 3. By Lemma \ref{good_p}, there is an infinite
					set of primes $T$ such that for every $\fp\in T$:
					\begin{equation}\label{eq:iso1}
					v_{\fp}(f^{\mu}(b_j)-\gamma)>0\ \text{for some $\mu=\mu_\fp$.}
					\end{equation}
					
					For each $\fp\in T$, if the $\fp$-adic closure of $\cO_{\varphi}(P)$
					intersects $V(K_\fp)$ then 
					we must have:
					\begin{equation}\label{eq:iso2}
					v_{\fp}(f^l(b_j)- g_j(f^l(a)))>0\ \text{for some $l=l_\fp\geq \mu_\fp$}.
					\end{equation}
					
					From (\ref{eq:iso1}) and (\ref{eq:iso2}), we have:
					$$v_{\fp}(f^{l-\mu}(\gamma)- g_j(f^l(a)))>0.$$
					Since $a,\gamma\in \kappa$ and $f\in\kappa[X]$, this
					equality means $f^{l-\mu}(\gamma)= g_j(f^l(a))$. Hence
					$a$ is $f$-preperiodic, contradiction. Therefore, for every $\fp\in T$,
					the $\fp$-adic closure of $\cO_{\varphi}(P)$ does not intersect $V(K_\fp)$.
					
				  \textbf{Step 1.1.3:} we turn to the most difficult case, namely $\hat{h}_f(a)>0$.
				  By Proposition \ref{prop:all_g},
				  write $f=\rho_1\circ\tilde{f}^A$ for some 
				  integer $A\geq 1$ and linear $\rho_1\in M(f^\infty)$.
				  Since $D$ is relatively prime to $|M(f^\infty)|$,
				  there is $\rho_2\in M(f^\infty)$
				  such that $f=(\tilde{f}\circ \rho_2)^A$. Replacing
				  $\tilde{f}$ by $\tilde{f}\circ \rho_2$,
				  we may assume $f=\tilde{f}^A$.
				  For each $1\leq j\leq n-1$, write
				  $g_j=L_j\circ \tilde{f}^{m_j}$.
				  
			    For almost all
				  $\fp$, we have $v_{\fp}(f^n(a))\geq 0$
				  for every $n\geq 0$. If for some $1\leq j\leq n-1$,
				  $b_j=\infty$ then for almost all $\fp$, the 
				  $\fp$-adic closure of the orbit
				  of $P$ lies in:
				  $$\left\{(x,y_1,\ldots,y_{j-1},\infty,y_{j+1},\ldots,y_{n-1}):\ x\in K_\fp,\ v_\fp(x)\geq 0\right\}$$
				  which is disjoint
				  from $V(K_\fp)$. So we can assume $b_j\neq\infty$ for every $1\leq j \leq n-1$. 
				  
				  By taking a finite extension of
				  $K$ if necessary, we choose an $\tilde{f}$-periodic 
				  point $\gamma\in K$
				  of exact period $N\geq3$ such
				  that every point in the $\tilde{f}$-orbit of $\gamma$ is not
				  a zero of the derivative $\tilde{f}'(X)$ of $\tilde{f}(X)$.
				  By Lemma \ref{good_p}, there is an infinite set of primes
				  $R$ such that for every $\fp\in R$, all of the following hold:
				  \begin{equation}\label{con3}
				   \text{$a,b_1,\ldots,b_{n-1}\in\scrO_{\fp}$, 
				   in other words $P\in\bA^n(\scrO_{\fp})$}
				  \end{equation}
				  \begin{equation}\label{con4}
				  	v_{\fp}(\tilde{f}'(\tilde{f}^i(\gamma)))=0\ \forall\ 0\leq i<N
				  \end{equation}
				  \begin{equation}\label{con5}
				  	\text{$\tilde{f},f,L_1,\ldots,L_{n-1}$ are in $\scrO_{\fp}[X]$ and their leading
				  	coefficients are $\fp$-adic units}
				  \end{equation}
				  \begin{equation}\label{con6}
				  	\text{$v_{\fp}(\tilde{f}^{\mu}(a)-\gamma)>0$ for some 
				  		$\mu=\mu_{\fp}$.}
				  \end{equation}
				  
				  In fact, for $1\leq j\leq n-1$, the leading coefficient $c_j$
				  of $L_j$ is a root of unity
				  and $f=\tilde{f}^A$ is 
				  an iterate of $\tilde{f}$, hence the conditions on $c_j$
				  and $f$
				  in (\ref{con5}) are redundant. 
				  Now fix a prime $\fp$ in $R$ and write $\mu=\mu_\fp$, we still use
				  $V$ to denote the model $y_j=L_j\circ \tilde{f}^{m_j}(x)\forall j$
				  over $\scrO_\fp$, hence it makes sense to write $V(\scrO_\fp)$
				  and $V(k_\fp)$. We also use $P$, and $\varphi$
				  to denote the corresponding models over $\scrO_\fp$.
				  Replacing $P$ by $\varphi^{\mu}(P)$ and 
				  $\gamma$ by $\tilde{f}^{A\mu-\mu}(\gamma)$ if necessary, we can
				  assume that $v_{\fp}(a-\gamma)>0$. This gives that
				  $a$ is $\tilde{f}$-periodic, hence
				  $f$-periodic, modulo $\fp$ and:
				  \begin{equation}\label{con7}
				  \text{$v_{\fp}(\tilde{f}^l(a)-\tilde{f}^l(\gamma))>0$ and 
				  	$v_{\fp}(\tilde{f}'(\tilde{f}^l(a))-\tilde{f}'(\tilde{f}^l(\gamma)))>0\ \forall l\geq0$}
				  \end{equation}
				  The second inequality in (\ref{con7}) together with (\ref{con4})
				   give:
				  \begin{equation}\label{con8}
				  	v_{\fp}(\tilde{f}'(\tilde{f}^l(a)))=0\ \forall l\geq0
				  \end{equation}
				  
				  By (\ref{con8}) and induction, we have:
				  \begin{equation}\label{con9}
				  v_{\fp}((\tilde{f}^l)'(\tilde{f}^k(a)))=0\ \forall 
				  l\geq 0\ \forall k\geq 0
				  \end{equation}
				  
				  Since $f=\tilde{f}^A$, identity (\ref{con9}) implies:
				  \begin{equation}\label{con9bis}
				  v_{\fp}((f^l)'(f^k(a)))=0\ \forall 
				  l\geq 0\ \forall k\geq 0
				  \end{equation}

				  Since the $\varphi$-orbit of $P$ lies in $\bA^n(\scrO_\fp)$
				  which is closed in $(\bP^1)^n(K_\fp)$,
				  it suffices to show that $V(\scrO_\fp)$ does
				  not intersect the $\fp$-adic closure of the $\varphi$-orbit of
				  $P$. 
				  Assume there is $\eta$ such that the mod $\fp$ reduction				 				  $\varphi^\eta(\bar{P})$ lies in $V(k_{\fp})$, otherwise
				  there is nothing to prove. After replacing $P$ by $\varphi^\eta(P)$,
				  we can assume $\eta=0$, or in other words $\bar{P}\in V(k_\fp)$.
				  This means 
				  \begin{equation}\label{con10}
				  v_{\fp}(b_j-L_j\circ \tilde{f}^{m_j}(a))>0\ \forall 1\leq j\leq n-1. 
				  \end{equation}
				  Note that
				  $\tilde{f}\circ L=L^D\circ \tilde{f}$, and $L$ has finite order, therefore (\ref{con10}) together
				  with the $\tilde{f}$-periodicity mod $\fp$ of $a$
				  give that $b_j$ is
				  $\tilde{f}$-preperiodic, hence
				  $f$-preperiodic, mod $\fp$ for $1\leq j\leq n-1$. Therefore 
				  $P$ is $\varphi$-preperiodic
				  mod $\fp$.

				  Inequality (\ref{con10}) shows that:
				  \begin{equation}\label{con11}
				  	v_{\fp}(\tilde{f}^l(b_j)-L_j^{D^l}\circ \tilde{f}^{l+m_j}(a))>0\ \forall l\geq 0\ \forall 1\leq j\leq n-1.
				  \end{equation}
				  Our next step is to show:
				  \begin{equation}\label{con12}
				  	v_{\fp}(\tilde{f}'(L_j^{D^l}\circ \tilde{f}^{l+m_j}(a)))=0\ \forall l\geq0\ \forall 1\leq j\leq n-1
				  \end{equation}
				  Recall that $c_j$ denotes the leading coefficient of 
				  the 
				  linear polynomial $L_j$, we have:  
				  \begin{equation}\label{con13}
				  (\tilde{f}\circ L_j^{D^l}\circ \tilde{f}^{m_j+l})'(a)=(L_j^{D^{l+1}}\circ \tilde{f}^{m_j+l+1})'(a)=c_j^{D^{l+1}}(\tilde{f}^{m_j+l+1})'(a)
				  \end{equation}
				  and
				  \begin{equation}\label{con14}
				  (\tilde{f}\circ L_j^{D^l}\circ \tilde{f}^{m_j+l})'(a)=\tilde{f}'(L_j^{D^l}\circ \tilde{f}^{m_j+l}(a))
				  	c_j^{D^l}(\tilde{f}^{m_j+l})'(\alpha)				  	
				  \end{equation}
				  Since $c_j$ is a $\fp$-adic unit,
				  (\ref{con13}) and (\ref{con14}) imply:
				  \begin{equation}\label{con14bis}
				  v_{\fp}((\tilde{f}^{m_j+l+1})'(a))
				  =v_{\fp}(\tilde{f}'(L_j^{D^l}\circ \tilde{f}^{m_j+l}(a))
				  	(\tilde{f}^{m_j+l})'(\alpha))				  	
				  \end{equation}
				  Now (\ref{con12}) follows from (\ref{con9}), and
				  (\ref{con14bis}).  
				  By (\ref{con11}) and (\ref{con12}), we have:
				  \begin{equation}\label{con15}
				  	v_{\fp}(\tilde{f}'(\tilde{f}^l(b_j)))=0\ \forall l\geq 0\ \forall 1\leq j\leq n-1
				  \end{equation}
				  By (\ref{con15}) and induction, we have:
				  \begin{equation}\label{con15'}
				  v_{\fp}((\tilde{f}^l)'(\tilde{f}^k(b_j)))=0\ \forall 
				  l\geq 0\ \forall k\geq 0\ \forall 1\leq j\leq n-1
				  \end{equation}
				  Since $f=\tilde{f}^A$, identity (\ref{con15'}) implies:
				  \begin{equation}\label{con15''}
				  v_{\fp}((f^l)'(f^k(b_j)))=0\ \forall 
				  l\geq 0\ \forall k\geq 0\ \forall 1\leq j\leq n-1
				  \end{equation}
				  
				  Now (\ref{con9bis}) and (\ref{con15''}) show that 
				  the $\scrO_{\fp}$-morphism $\varphi$ is \'etale at every 
				  $\scrO_\fp$-valued point in the orbit of $P$. Together
				  with the fact that
				  $P$ is preperiodic mod $\fp$, we can apply
				   Theorem \ref{etale} to get the desired conclusion. This
				   finishes the case $V$ is periodic and $I_V=\emptyset$.

				   \textbf{Step 1.2:} assume that $I_V\neq\emptyset$.
				 Let $\pi$ and $\pi'$
				 denote the projection from $(\bP^1_K)^n$
				 onto $(\bP^1)^{I_V}$ and $(\bP^1)^{I-I_V}$,
				 respectively. By Proposition \ref{product_of_curves},
				 $V=\pi_1(V)\times \pi_2(V)$
				 where $Z:=\pi_1(V)$ is a periodic
				 point of $(\bP^1)^{I_V}$. Write $W=\pi_2(V)$. 
				 By Step 1.1, the conclusion of Theorem \ref{intro_thm1} is valid for $W$.
				 Let $\varphi_1$ and $\varphi_2$ respectively denote
				 the diagonally split self-map of $(\bP^1)^{I_V}$
				 and $(\bP^1)^{I-I_V}$ associated to $f$.

				 \textbf{Step 1.2.1:} assume there 
				 is the largest $N$ such that
				 $\varphi_1^N(\pi_1(P))=Z$. Replace $P$ by $\varphi^{N+1}(P)$,
				 we can assume that the $\varphi_1$-orbit of
				 $\pi_1(P)$ does not contain $Z$. By Theorem 
					\ref{intro_thm0}, there exist infinitely many primes
					$\fp$ such that the $\fp$-adic closure $\cC_{\fp}$
					of $\cO_{\varphi_1}(\pi_1(P))$
					does not contain $Z$. For each such $\fp$,
					the $\fp$-adic closure of
				 $\cO_{\varphi}(P)$ is contained in
				 $\cC_{\fp}\times (\bP^1)^{I-I_V}(K_\fp)$
				 which is disjoint from $V(K_\fp)$.

				 \textbf{Step 1.2.2:} now assume $\varphi_1^n(\pi_1(P))=Z$
				 for infinitely many $n$. This implies that $Z$ is periodic
				 and $\pi_1(P)$ is preperiodic. Replacing $P$ by an iterate, we may assume
				 $\pi_1(P)=Z$. Let $N$ denote the exact period of $Z$. Since 
				 $\cO_{\varphi}(P)$ does not intersect $V$, we have that
				 $\cO_{\varphi_2^N}(\pi_2(P))$ does not
				 intersect $W$. By Step 1.1, the theorem holds for $W$.
				 Hence there exist infinitely many primes $\fp$ such that
				 the $\fp$-adic closure $\cC_{\fp}$
				 of $\cO_{\varphi_2^N}(\pi_2(P))$ does not
				 intersect $W(K_\fp)$. For each such $\fp$, the $\fp$-adic closure
				 of $\cO_{\varphi}(P)$ is contained in:
				 $$\left(\bigcup_{i=1}^{N-1} \left\{\varphi_1^{i}(Z)\right\}\times (\bP^1)^{I-I_V}(K_\fp)\right)
				 \cup \left\{Z\right\}\times \cC_{\fp}$$
				 which is disjoint from $V(K_\fp)$.
				 This finishes the case $V$ is periodic.

				  \textbf{Step 2:} assume $V$ is preperiodic and not periodic,
				   hence there exist $k> 0$
				   and $M>0$ such that $\varphi^{k+M}(V)= \varphi^{k}(V)$. For
				   $0\leq i<M$, write $V_i=\varphi^{k+i}(V)$.
				   Then we have that $V_i$ is periodic for every $0\leq i<M$.
				   
	
				   
				   \textbf{Step 2.1:} assume $I_V=\emptyset$. 
				   As in Step 1.1, we can relabel the factors 
				   of $(\bP^1)^n$ and rename the coordinate functions into
				   $x,y_1,\ldots,y_{n-1}$ so that
				   for each $0\leq i<M$, the periodic curve $V_i$
				   is given by equations $y_j=g_{i,j}(x)$ for
				   $1\leq j\leq n-1$,
				   where $g_{i,j}$ commutes with an iterate of $f$
				   and $\deg(g_{i,1})\leq\ldots\leq\deg(g_{i,n-1})$.
				   
					Since $V$ is not 
				   periodic, $V$ and $V_i$ are distinct curves,
				   hence $V\cap V_i$ is a finite set of points for every
				   $0\leq i<M$. By Lemma \ref{ext}, we extend
				   $K$ such that $\bP^1(K)$ contains the coordinates of all these
				   points. 
				   
				   Now we assume that for almost all
				   $\fp$, the $\fp$-adic closure of $\cO_{\varphi}(P)$
				   intersects $V(K_\fp)$ and we will arrive at a 
				   contradiction. Because $V_0=\varphi^k(V)$, for every such $\fp$,
				   the $\fp$-adic closure of $\cO_{\varphi}(P)$
				   intersects $V_0(K_\fp)$. Since
				   $V_0$ is periodic, the conclusion
				   of Theorem \ref{intro_thm1} has been established
				   for $V_0$. We must have that $V_0(K)$ contains
						an element in the orbit of $P$. By ignoring the
						first finitely many elements in that orbit, we may 
						assume $P\in V_0(K)$.
						Then we have $\varphi^{i+tM}(P)\in V_i(K)$
						for all $t\geq0$, $0\leq i<M$.  Let $a$
						denote the $x$-coordinate of $P$. For each $0\leq i<M$,
						let $n_i=|V\cap V_i|$, and let $u_{i,1},...,u_{i,n_i}$
						denote the $x$-coordinates of points in $V\cap V_i$.
						Since $V_i$ is defined by $y_j=g_i(x)$ for
						$1\leq j\leq n-1$, every point on $V_i$ is 
						uniquely determined by its $x$-coordinate. Since the orbit
						of $P$ does not intersect $V(K)$, we have:
						$$f^{i+tM}(a)\notin \{u_{i,1},\ldots,u_{i,n_i}\}\ 
						\forall t\geq 0,\ \forall 0\leq i<M.$$
		        Write $\cA=\displaystyle\bigcup_{0
		        \leq i<M}(f^i)^{-1}(\{u_{i,1},\ldots,u_{i,n_i}\})$. We have that
		        $f^{tM}(a)\notin \cA$
		        for all $t\geq 0$. By Theorem \ref{intro_thm0},
		        there exists infinitely many primes
		        $\fq$ such that the $\fq$-adic closure $C_\fq$
		        of $\{f^{tM}(a):\ t\geq 0\}$ 
		        does not intersect $\cA$. Now the $\fq$-adic closure
		        of the orbit of $P$ is contained in:
		        $$\bigcup_{0\leq i<M} \left(V_i(K_\fq)\cap \{(x,y_1,\ldots,y_{n-1})\in (\bP^1)^n(K_\fq):\ 
		        x\in f^i(C_\fq)\}\right)$$
		        which is disjoint from $V(K_\fq)$. This gives a contradiction and
		        finishes the 
		        case $I_V=\emptyset$.

		        \textbf{Step 2.2:} assume $I_V\neq\emptyset$. We can reduce to
		        Step 2.1 in exactly the same way we reduce
		        Step 1.2 to Step 1.1. This finishes the proof of
		        Theorem \ref{intro_thm1}.


				    
				   \subsection{Proof of Theorem \ref{intro_thm1*}}
				   In this subsection, we prove Theorem \ref{intro_thm1*}
				   by using induction on $n$. The cases $n\in\{1,2,3\}$
				   or $\dim(V)\in\{0,1,n-1\}$ have been established
				   by Theorem \ref{intro_thm0} and Theorem \ref{intro_thm1}
				   even without the extra technical assumption of Theorem \ref{intro_thm1*}.
				   Now 
				   assume $N>3$ and Theorem \ref{intro_thm1*} holds for all
				   $1\leq n<N$, we consider the case $n=N$. 
				   We may assume $\dim(V)>1$. As in the proof of Theorem
				   \ref{intro_thm1} in the last subsection, we 
				   can assume $I_V=\emptyset$ and deduce the
				   more general case in exactly the same way.

				    \textbf{Step 1:} assume $V$ is periodic. By Theorem \ref{inv_curves}, 
				    there exist
				    $1\leq i<j\leq n$ such that the image $\pi(V)$ of $V$ under the 
				    projection 
				    $$\pi\colon (\bP^1)^n\rightarrow(\bP^1)^2$$
				    onto the $(i,j)$-factor is a periodic curve.
				    We may assume $(i,j)=(1,2)$. If $\pi(\cO_{\varphi}(P))$
				    does not intersect $\pi(V)(K)$ then we can apply the induction
				    hypothesis. Otherwise, by ignoring the first finitely many
				    elements in the orbit of $P$, we may assume 
				    $\pi(P)\in \pi(V)(K)$. 
	%
	%
				    
				    Since $I_V=\emptyset$, we
				    may assume $\pi(V)$ is given by the equation $x_2=g(x_1)$ 
				    where 
				    $g$ commutes with an iterate of $f$ (the case $x_1=g(x_2)$ is 
				    similar).
				    Our technical assumption gives that $g$ commutes with $f$.
				    We consider the closed embedding:
				    $$(\bP^1)^{n-1}\stackrel{e}{\longrightarrow}(\bP^1)^{n}$$
				    defined by $e(y_1,\ldots,y_{n-1})=(y_1,g(y_1),y_2,\ldots,y_{n-1})$.
				    By pulling back under $e$, we reduce our problem
				    to the subvariety $(\bP^1)^{n-1}$ and apply the induction hypothesis.
				    This finishes the case $V$ is periodic.

				    \textbf{Step 2:} assume $V$ is preperiodic and not periodic. 
				    Write $\delta=\dim(V)$. 
				    By Proposition \ref{product_of_curves} and without
				    loss of generality, there exist
				    $m_0=0< m_1< m_2<\ldots<m_\delta=n$
				    such that $V=C_1\times C_2\times\ldots\times C_\delta$
				    where each $C_i$ is an $(f,\ldots,f)$-preperiodic
				    curve of $(\bP^1)^{m_i-m_{i-1}}$ for $1\leq i\leq \delta$.
				    For $1\leq i\leq \delta$, let $\pi_i$
				    denote the corresponding projection from $(\bP^1)^n$
				    onto $(\bP^1)^{m_i-m_{i-1}}$, and let $\varphi_i$
				    denote the self-map $(f,\ldots,f)$ of $(\bP^1)^{m_i-m_{i-1}}$.
				    If $P$ is preperiodic then there is nothing to prove, 
				    hence we may assume $P$ is wandering. Without loss of
				    generality, we assume $\pi_1(P)$
				    is $\varphi_1$-wandering.

				    \textbf{Step 2.1:} assume $C_1$ is not $\varphi$-periodic (recall
				    that it is preperiodic).
				    Then the set 
				    $$\bigcup_{j>0}C_1\cap \varphi_1^j(C_1)$$
				    is finite. Since $\pi_1(P)$
				    is wandering, there are only finitely many $j$'s such that
				    $\varphi_1^j(\pi_1(P))$ is contained in $C_1(K)$.
				    Ignore finitely many points in the orbits of $P$, we may
				    assume that the $\varphi_1$-orbit of
				    $\pi_1(P)$ does not intersect $C_1(K)$. Then we can apply the 
				    induction
				    hypothesis for the data
				    $((\bP^1)^{m_1},\varphi_1,\pi_1(P),C_1)$.
				    
				    \textbf{Step 2.2:} assume $C_1$
				    is $\varphi$-periodic. If the $\varphi_1$-orbit
				    of $\pi_1(P)$ does not intersect
				    $C_1(K)$
				    then we can apply the induction hypothesis as above. So we may
				    assume some element in this orbit is in $C_1(K)$. 
				    Replacing $P$ by an iterate, we may assume $\pi_1(P)\in C_1(K)$.
				    Since $I_V=\emptyset$, the curve $C_1$ is not contained in
				    any hypersurface of the form $x_j=\gamma$.
				    By Proposition \ref{product_of_curves} and the
				    discussion before it, we know that
				    $C_1$ is either $\bP^1$ if $m_1=1$ 
				    or is given by equations of the form (after possibly relabeling
				    the variables $x_1,\ldots,x_{m_1}$):
				    $x_2=g_1(x_1)$, $x_3=g_2(x_1)$, $\ldots$,
				    $x_{m_1}=g_{m_1-1}(x_{m_1-1})$, where 
				    each $g_j$ commutes with an iterate of $f$. 
				    By our technical assumption, every $g_j$ commutes with $f$. Hence
				    $C_1$ is $\varphi_1$-invariant, and we have
				    $\pi_1(\varphi^l(P))\in C_1(K)$
				    for every $l\geq 0$. Let $P'$ denote
				    the image of $P$ under the projection from $(\bP^1)^n$
				    to $(\bP^1)^{m_2-m_1}\times\ldots\times (\bP^1)^{m_\delta-m_{\delta-1}}$.
				    We now apply the induction hypothesis for the data:
				    $$((\bP^1)^{n-m_1},\varphi_2\times\ldots\times\varphi_\delta,
				    C_2\times\ldots\times C_\delta,P').$$ 
				    This finishes the proof of Theorem \ref{intro_thm1*}.

				 \subsection{Proof of
				 Theorem \ref{intro_thm3} when $V$ is a hypersurface}\label{notdis}
				 We first consider the case 
				 $\sigma\circ f\circ \sigma^{-1}=X^d$ for some 
				 $\sigma\in \Aut(\bP^1)$. By extending $K$, we may assume
				 $\sigma\in K[X]$. For almost all
				 $\fp$, $\sigma$ induces a homeomorphism
				 from $(\bP^1)^n(K_\fp)$ to itself. Hence we can assume
				 $f(X)=X^d$. Since the conclusion of
				 Theorem \ref{intro_thm3} is for almost all $\fp$,
				 we can assume $V$ is an absolutely irreducible 
				 preperiodic hypersurface defined over $K$. 
				 
				 First, assume there exists $1\leq i\leq 
				 n$ such that $V$ is given by $x_i=0$ or $x_i=\infty$.
					By the automorphism $X\mapsto X^{-1}$ and without
				 loss of generality, we may assume 
				 $V$ is given by $x_1=0$. Let $\alpha$ denote
				 the first coordinate of $P$, since the orbit of $P$ does not
				 intersect $V(K)$, we have $\alpha\neq 0$. For almost
				 all $\fp$, the $\fp$-adic closure of the orbit of $P$
				 lies in:
				 $$\{(x_1,\ldots,x_n)\in (\bP^1)^n(K_\fp):\ v_p(x_1)=0\}$$
				 which is disjoint from $V(K_\fp)$.

				 Therefore, we may assume $V\cap \Gmn\neq \emptyset$. It is
				 not difficult to prove that $V\cap \Gmn$
				 is a translate of a subgroup of codimension 1, see 
				 \cite[Remark 1.1.1]{Zan}. We now
				 denote the coordinate of each factor $\bP^1$ as
				 $x_1,...,x_q$, $y_1,...,y_r$ and $z_1,...,z_s$
				 (hence $q+r+s=n$)
				 such that $V$ is given by an equation:
				 		$$x_{1}^{a_1}...x_q^{a_q}=\zeta y_1^{b_1}....y_r^{b_r},$$
				 where $a_1,...,b_r$
				 are positive integers, and $\zeta$ is a 
				 root of unity. Actually, for $V$ to be preperiodic, 
				 we have $\displaystyle \zeta^{d^{A}}=\zeta^{d^{B}}$ for some $0\leq A<B$;
				 but we will not need this stronger fact. Write
				 $P$ under the corresponding coordinates as:
				 	$$P=(\alpha_1,...,\alpha_q,\beta_1,...,\beta_r,
				 	\gamma_1,...,\gamma_s).$$
				 Assume some elements among
				 the $\alpha_1,...,\beta_r$ are either 0 or $\infty$,
				 say, we have $\alpha_1=0$. Then each irreducible 
				 component of the intersection 
				 $V\cap \{x_1=0\}$ has the form
				 $\{x_1=0 \wedge x_i=\infty\}$
				 for $2\leq i\leq q$, or the form
				 $\{x_1=0 \wedge y_j=0\}$ for some
				 $1\leq j\leq r$. Thus the coordinates
				 of $P$ satisfy:
				 	$$(\alpha_i\neq \infty\ \forall 2\leq i\leq q)\wedge
				 		(\beta_j\neq 0\ \forall 1\leq j\leq r).$$
				 For every prime $\fp$, let $v_\fp(\infty)=-\infty$ (warning:
				 the $\infty$ on the left is an element of $\bP^1(K)$
				 while the $\infty$ on the right is 
				 an element of the extended real numbers). For almost
				 all primes $\fp$, the $\fp$-adic closure of
				 the orbit of $P$ is contained in:
				 $$\left\{(0,X_2,...,Z_s):\ v_{\fp}(X_i)\geq 0,\ 
				 v_{\fp}(Y_j)\leq 0\ \forall 2\leq i\leq q\ \forall 1\leq j
				 \leq r\right\}$$
				 which is disjoint from $V(K_\fp)$. The case, say, 
				 $\alpha_1=\infty$ is treated similarly.

				 Now we can assume that all the $\alpha_1,...,\beta_r$
				 lie in $\Gm(K)$. 
				  Let 
				 	$$\eta=\alpha_1^{a_1}...\alpha_q^{a_q}\beta_1^{-b_1}...
				 	\beta_r^{-b_r}.$$
				 Since the $\varphi$-orbit
				 of $P$ does not intersect 
				 $V(K)$, we have that the $f$-orbit of $\eta$
				 does not contain $\zeta$. For almost all 
				 $\fp$, we have $\eta$ is a $\fp$-adic unit.
				 By Theorem \ref{etale}, for almost
				 all $\fp$, the $\fp$-adic closure $C_{\fp}(\eta)$ of
				 the orbit of $\eta$ does not contain $\zeta$. Now
				 the $\fp$-adic closure of the orbit of $P$ lies in
				 $$\left\{(X_1,...,Z_s):\ X_1^{a_1}...Y_r^{-b_r}\in
				  C_{\fp}(\eta)\right\}$$	 
				 which is disjoint from $V(K_\fp)$. This finishes the case
				 $f$ is a conjugate of $X^d$.

				 Now we assume $f$ is a conjugate of $\pm C_d(X)$. As before,
				 we may assume $f(X)=\pm C_d(X)$. Let $\hat{f}=\pm X^d$,
				 and $\hat{\varphi}$ be the diagonally split morphism
				 corresponding $\hat{f}$. Consider the morphisms:
				 $$\Phi\colon (x_1,...,x_n)\mapsto
				 \left(x_1+\frac{1}{x_1},...,x_n+\frac{1}{x_n}\right)$$
				 from $(\bP^1)^n$ to itself. We have the commutative diagram:
				 $$\begin{diagram}
				 		\node{(\bP^1)^n}\arrow{s,l}{\Phi}\arrow{e,t}{\hat{\varphi}}
				 		\node{(\bP^1)^n}\arrow{s,r}{\Phi}\\
				 		\node{(\bP^1)^n}\arrow{e,b}{\varphi}\node{(\bP^1)^n}
				 \end{diagram}$$
				 
				 Extend $K$ further, we may assume there is $Q\in (\bP^1)^n(K)$
				 such that $\Phi(Q)=P$. Write $\hat{V}=\Phi^{-1}(V)$. 
				 We have that the $\Phi$-orbit
				 of $Q$ does not intersect $\hat{V}(K)$. Note that
				 the conclusion of the theorem has been established for $\hat{f}$. 
				 Therefore, for almost all 
				 $\fp$, the $\fp$-adic closure of
				 $\cO_{\hat{\varphi}}(Q)$ does not intersect $\hat{V}(K_\fp)$.
				 Since $\Phi$ is finite, it maps the 
				 $\fp$-adic closure of $\cO_{\hat{\varphi}}(Q)$ onto
				 the $\fp$-adic closure of $\cO_{\varphi}(P)$. We can conclude
				 that the $\fp$-adic closure of $\cO_{\varphi}(P)$
				 does not intersect $V(K_\fp)$.

				 \subsection{Proof of Theorem \ref{intro_thm3}} As in Subsection
				 \ref{notdis}, we first consider the case $f$ is conjugate
				 to $X^d$, and then we may assume $f(X)=X^d$. By Theorem \ref{intro_thm0}
				 and Subsection \ref{notdis}, we have that Theorem \ref{intro_thm3}
				 is valid when $n=1,2$. We proceed
				 by induction on $n$. Let $N\geq 3$ and assume that Theorem 
				 \ref{intro_thm3} holds
				 for all $n<N$, we now consider the case $n=N$.
				 As in Subsection \ref{notdis}, we assume $V$ is
					an absolutely irreducible
					preperiodic subvariety defined over $K$.  
					
					We first consider the case $V$ is contained in a hypersurface of
					the form $x_i=0$ or $x_i=\infty$ for some $1\leq i\leq n$.
					Without loss of generality, we may assume $V$ is
					contained in the hypersurface $x_1=0$. Let $\alpha$ denote
					the first coordinate of $P$. If $\alpha\neq 0$ then
					for almost all $\fp$, $\fp$-adic closure of the orbit of $P$
				  is contained in:
				  $$\{(x_1,\ldots,x_n):\ v_{\fp}(x_1)=0\}$$
				 which is disjoint from $V(K_\fp)$. Hence we assume $\alpha=0$. 
				 We now restrict to the hyperplane $x_1=0$ and apply the induction 
				 hypothesis.

				 Therefore we may assume $V\cap \Gmn\neq\emptyset$. 
				 Write $P=(\alpha_1,\ldots,\alpha_n)$. We first consider the case
				 $P\notin\Gmn$. Without loss of generality, assume $\alpha_1=0$.
				 We can again restrict to the hypersurface $x_1=0$
				 and apply the induction hypothesis.

				 Now consider the case $P\in \Gmn$. For almost all $\fp$, the
				 $\fp$-adic closure of the orbit of $P$ lies in:
				 $$(x_1,\ldots,x_n)\in(\bP^1)^n(K_\fp):\ v_p(x_i)=0\ \forall 1\leq i\leq n\}$$
				 which is closed in both $(\bP^1)^n(K_\fp)$
				 and $\Gmn(K_\fp)$. Hence it suffices to show
				 that for almost all $\fp$, the $\fp$-adic closure
				 of $\cO_{\varphi}(P)$ in $\Gmn(K_\fp)$
				 does not intersect $(V\cap\Gmn)(K_\fp)$.
				 This follows from the main result
				 of \cite[Theorem 4.3]{AKNTVV}.

				 \section{Dynamical Bombieri-Masser-Zannier Height Bound}\label{DynamicalBMZ}
				\subsection{Motivation and Main Results}
					This section presents the second arithmetic application of the 
					Medvedev-Scanlon 
					theorem to a dynamical analogue of ``complementary dimensional 
					intersections'' in $\Gmn$ 
					first studied by Bombieri-Masser-Zannier in \cite[Theorem 1]{BMZ99}.
					The story begins with the following:
					\begin{question}[Lang, Manin-Mumford]\label{LMM}
					Let $X$ be an abelian variety or the torus $\Gmn$ over $\C$.
					Let $C$ be an irreducible curve in $X$. Assume $C$
					is not a torsion translate of a subgroup. Is it true that
					there are only finitely many torsion points on $C$?
					\end{question} 
					
					This question has an affirmative answer. 
					When $X$ is an abelian variety, it is the Manin-Mumford conjecture first 
					proved
					by Raynaud \cite{Raynaud}. When $X=\Gmn$, it is a
					special case of a  question of
					Lang stated in the 1960s (see, for example, \cite{Lang-dio-book}) 
					which admits many proofs as well as generalizations.
					For example, Bombieri, Masser and Zannier \cite{BMZ99}
					obtain the following:
					\begin{theorem}[Bombieri, Masser, Zannier]\label{BMZ}
					Let $C$ be an irreducible curve in $\Gmn$ defined over
					a number field $K$ such that $C$ is not
					contained in any translate of a proper subgroup. Then
					\begin{itemize}
						\item [(a)] Points in $\displaystyle \bigcup_{V} 
							(C(\bar{K})\cap V(\bar{K}))$
							have bounded height, where $V$ ranges over
							all subgroup of codimension 1.
						\item [(b)] The set $\displaystyle \bigcup_{V}
							(C(\bar{K})\cap V(\bar{K}))$
							is finite, where $V$ ranges over all
							subgroups of codimension 2.
					\end{itemize}
					\end{theorem}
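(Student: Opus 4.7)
The plan is to prove part (a) by a pullback-of-heights argument exploiting the non-degeneracy of $C$, and then to deduce part (b) from (a) via Northcott's theorem.

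For (a), fix $P=(\alpha_1,\ldots,\alpha_n)\in C(\bar K)\cap V(\bar K)$ with $V$ defined by a primitive monomial equation $\chi_{\mathbf a}(x):=x_1^{a_1}\cdots x_n^{a_n}=\zeta$, where $\zeta$ is a root of unity and $\mathbf a\in\Z^n$ is primitive. Let $\bar C$ denote the smooth projective model of $C$ and $D_i:=\operatorname{div}(x_i|_C)\in\operatorname{Div}(\bar C)$. The hypothesis that $C$ is not contained in any translate of a proper subgroup is equivalent to the $\Z$-linear independence of $D_1,\ldots,D_n$ in $\operatorname{Div}(\bar C)\otimes\Q$, since $\chi_{\mathbf a}|_C$ is constant iff $\sum_i a_iD_i=0$. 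Because the $D_i$ span a finite-dimensional subspace, there is a constant $c>0$ depending only on $C$ with $\deg(\chi_{\mathbf a}|_C)\geq c\|\mathbf a\|_\infty$ for every nonzero $\mathbf a\in\Z^n$. Functoriality of heights for the morphism $\chi_{\mathbf a}|_C\colon\bar C\to\bP^1$, together with the explicit description of $\chi_{\mathbf a}$ as a product of the fixed functions $x_i|_C$ raised to the $a_i$, yields $h(\chi_{\mathbf a}(P))=\deg(\chi_{\mathbf a}|_C)\,h_C(P)+E(\mathbf a,P)$ with $|E(\mathbf a,P)|\leq C_1\|\mathbf a\|_\infty+C_2$ for constants depending only on $C$ (the error coming from Mahler-type estimates). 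Since $\chi_{\mathbf a}(P)=\zeta$ has height $0$, we obtain $h_C(P)\leq(C_1\|\mathbf a\|_\infty+C_2)/(c\|\mathbf a\|_\infty)$, a bound independent of $\mathbf a$ and hence of $V$.

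For part (b), a point $P\in\bigcup_V(C(\bar K)\cap V(\bar K))$ with $V$ of codimension $2$ satisfies two independent primitive monomial relations, so (a) gives $h_C(P)\leq B$ uniformly. To upgrade bounded height to finiteness, one uses that the two independent monomial relations together with the defining equations of $C$ cut $P$ out as a $0$-dimensional intersection inside $\Gmn$, whose degree over $K$ admits a uniform bound (again by the linear-independence analysis of the $D_i$). Northcott's theorem applied to $h_C(P)\leq B$ and bounded $[K(P):K]$ then yields the finiteness claimed in (b).

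The main obstacle is making the error term $E(\mathbf a,P)$ in the height pullback genuinely linear in $\|\mathbf a\|_\infty$: the standard functoriality $h_{\bP^1}(\phi(P))=\deg(\phi)h_C(P)+O_\phi(1)$ has a $\phi$-dependent constant whose dependence must be tracked as $\phi=\chi_{\mathbf a}$ varies over all primitive monomials. This reduces to an explicit Mahler-measure estimate for products of fixed rational functions raised to integer powers, straightforward at non-archimedean places but delicate archimedeanly; it is at this step that the original Bombieri-Masser-Zannier argument invokes its geometry-of-numbers and Dobrowolski-type lower-bound machinery.
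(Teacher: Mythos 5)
Theorem \ref{BMZ} is not proved in this paper; it is quoted from \cite{BMZ99} and used purely as motivation for the dynamical analogues that are the paper's actual contribution. So there is no in-paper argument to compare against, and I assess your proposal on its own terms.

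For part (a) the structure is sound. The reduction to a lower bound $\deg(\chi_{\mathbf a}|_C)\geq c\|\mathbf a\|_\infty$ via $\Z$-linear independence of the divisors $D_i$ is correct: the map $\mathbf a\mapsto\sum_i a_iD_i$ is an injective linear map from $\R^n$ into the finite-dimensional $\R$-span of the $D_i$ inside $\operatorname{Div}(\bar C)\otimes\R$, and $\deg(\chi_{\mathbf a}|_C)$ is (up to a factor $2$) the $\ell^1$-norm of $\sum_i a_iD_i$, so equivalence of norms gives the claimed lower bound. You also correctly locate the real content of (a), namely showing that the height-machine constant in $h(\chi_{\mathbf a}(P))=\deg(\chi_{\mathbf a}|_C)\,h_C(P)+E(\mathbf a,P)$ can be taken $O(\|\mathbf a\|)$ uniformly in $\mathbf a$. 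One small misattribution: that estimate does not rest on Dobrowolski/Lehmer-type lower bounds; those enter the original argument only in part (b), not in part (a).

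For part (b) there is a genuine gap. You assert that the two independent monomial relations together with the equations of $C$ cut $P$ out in a $0$-dimensional scheme ``whose degree over $K$ admits a uniform bound (again by the linear-independence analysis of the $D_i$).'' That is not true. By B\'ezout, the length of $C\cap V$ grows roughly like $\deg(C)\deg(V)$, and $\deg(V)$ is unbounded as $V$ ranges over all codimension-$2$ subgroups; the linear-independence analysis you invoke only yields a \emph{lower} bound on $\deg(\chi_{\mathbf a}|_C)$, which makes the intersection degree larger, not smaller. Thus $[K(P):K]$ is not uniformly bounded, and Northcott cannot be applied from bounded height alone. The actual deduction of (b) in \cite{BMZ99} combines the bounded-height conclusion of (a) with a Dobrowolski/Lehmer-type lower bound for heights of points of $\Gmn$ lying on proper algebraic subgroups but not on proper subtori (later sharpened by Amoroso--David): it is the tension between that lower bound and the upper bound from (a) that forces $[K(P):K]$ to be bounded, after which Northcott finishes. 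This additional ingredient is missing from your proposal.
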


					While Question \ref{LMM}, and part (b)
					of Theorem \ref{BMZ} are instances of ``unlikely intersections''
					(see \cite{Zan}), 
					part (a) of
					Theorem \ref{BMZ} is an
					instance of ``not too likely intersections''. More precisely, we expect
					that the intersection appears infinitely many times, yet remains ``small''
					in a certain sense. 
					A conjectural dynamical analogue of Question \ref{LMM} 
					has been proposed by Zhang \cite{ZhangDist}
					and modified by Zhang, Ghioca, and Tucker \cite{GTZ}. However, we
					are not aware of any dynamical analogue of part (a) of Theorem
					\ref{BMZ}. By using canonical height arguments and 
					the Medvedev-Scanlon theorem, we obtain the following:

					

					\begin{theorem}\label{dynBMZ_1}
						Let $K$ be a number field 
						or a function field. 
						Let $f\in K[X]$ be a disintegrated polynomial, and $\varphi:\ 
						(\bP^1_K)^n\longrightarrow (\bP^1_K)^n$ be the corresponding
						split polynomial map. Let $C$ be an irreducible
						curve in $(\bP^1_{\bar{K}})^n$ that is
						not contained in any periodic hypersurface. Assume
						$C$ is non-vertical, by which we mean
						$C$ maps surjectively onto each factor $\bP^1$
						of $(\bP^1)^n$. Then 
						the points in 
							$$\bigcup_{V} (C(\bar{K})\cap V(\bar{K}))$$
							have bounded Weil heights, where $V$ ranges over 
							all periodic hypersurfaces of
							$(\bP^1_K)^n$.
					\end{theorem}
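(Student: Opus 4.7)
The plan is to reduce the statement to a height comparison on $C$ via the Medvedev-Scanlon classification, and then handle one delicate ``resonant'' case using the Ritt-type rigidity provided by Proposition \ref{prop:all_g}. By Theorem \ref{inv_curves} applied over $\bar{K}$, every absolutely irreducible periodic hypersurface of $(\bP^1_{\bar{K}})^n$ is cut out by a single equation of one of two types: \textbf{(A)} $x_i = \zeta$ for some $i$ and some $f$-periodic point $\zeta$, or \textbf{(B)} $x_j = g(x_i)$ for some $i \ne j$ and some $g \in \bar{K}[X]$ commuting with an iterate of $f$. It therefore suffices to bound heights of the points in $C \cap V$ as $V$ ranges over hypersurfaces of these two types.

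Fix a Weil height $h_C$ on $C$ associated with an ample divisor. The hypothesis that $C$ surjects onto each factor $\bP^1$ ensures that $\pi_i^*\cO(1)$ has positive degree on $C$, so Mumford's height inequality on the smooth projective curve $C$, together with $\hhat_f = h + O(1)$, gives
\[
\hhat_f(\alpha_i(P)) \;=\; c_i\, h_C(P) + O\!\left(\sqrt{h_C(P)}+1\right) \qquad (P \in C(\bar{K})),
\]
for constants $c_i > 0$ independent of $P$. A point $P \in C \cap V$ of type (A) satisfies $\hhat_f(\alpha_i(P)) = \hhat_f(\zeta) = 0$, which immediately forces $h_C(P) = O(1)$ uniformly in $\zeta$. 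For type (B), the functional equation $\hhat_f(g(\alpha)) = \deg(g)\,\hhat_f(\alpha)$, valid whenever $g$ commutes with an iterate of $f$ (proved by chasing the definition of $\hhat_f$ through $g \circ f^m = f^m \circ g$), combined with $\alpha_j = g(\alpha_i)$ gives
\[
\bigl(c_j - \deg(g)\,c_i\bigr)\, h_C(P) \;=\; O\!\left(\sqrt{h_C(P)} + 1\right).
\]
Whenever $\deg(g) \ne c_j/c_i$, the prefactor $|c_j - \deg(g)\,c_i|$ is bounded below by a positive constant $\delta$ depending only on $c_i, c_j$ (since $\deg(g)$ is a positive integer while $c_j/c_i$ is a fixed rational), and this at once yields $h_C(P) = O(1)$ uniformly in such $g$.

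The main obstacle is the resonant case $\deg(g) = c_j/c_i$, where the leading terms in the height identity cancel and this argument collapses. To handle it I would invoke Proposition \ref{prop:all_g}(d): every polynomial commuting with an iterate of $f$ has degree of the form $\deg(\tilde{f})^m$ for some integer $m \ge 0$, and for each such degree the set of such polynomials is finite, parametrized by the finite cyclic group $M(f^\infty)$. Hence only finitely many $g$'s attain the resonant degree $c_j/c_i$ (and none do unless $c_j/c_i$ is itself a nonnegative integer power of $\deg(\tilde{f})$), each producing a single periodic hypersurface $V_g$; since by hypothesis $C$ is not contained in any periodic hypersurface, each intersection $C \cap V_g$ is a finite set of bounded height. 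Summing over pairs $(i,j)$ and over types (A) and (B) then gives a uniform bound on $h_C(P)$ for all $P$ in $\bigcup_V C(\bar{K}) \cap V(\bar{K})$, which translates to the claimed Weil height bound on each $\bP^1$-factor through the same Mumford comparison.
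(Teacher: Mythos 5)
Your proposal is essentially the paper's proof: reduce to $n=2$ via Medvedev--Scanlon, use a Hodge-index/Mumford-type height comparison on (the normalization of) $C$, exploit the functional equation $\hhat_f(g(\alpha)) = \deg(g)\,\hhat_f(\alpha)$ for $g$ commuting with an iterate of $f$ (the paper's Lemma~\ref{same_cano}), and handle the finitely many $g$'s of bounded degree via Proposition~\ref{prop:all_g}. So the approach is not a different route.

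There is, however, a bookkeeping error that makes your case split misleading. When you substitute $\hhat_f(\alpha_j) = \deg(g)\,\hhat_f(\alpha_i)$ into the two approximations $\hhat_f(\alpha_k(P)) = c_k\,h_C(P) + O(\sqrt{h_C(P)}+1)$, the error term on the $\alpha_i$-side gets multiplied by $\deg(g)$, so the correct conclusion is
\[
\bigl(c_j - \deg(g)\,c_i\bigr)\,h_C(P) \;=\; O\!\bigl((\deg(g)+1)\sqrt{h_C(P)} + \deg(g)+1\bigr),
\]
not $O(\sqrt{h_C(P)}+1)$. This matters for your ``non-resonant'' step: a lower bound $|c_j - \deg(g)c_i|\geq\delta>0$ with $\delta$ independent of $\deg(g)$ does \emph{not} ``at once'' give $h_C(P)=O(1)$, because the right side grows linearly in $\deg(g)$. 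What actually closes the argument (and what the paper does) is that for $\deg(g)$ sufficiently large the prefactor $|c_j - \deg(g)c_i|$ is itself $\gtrsim \deg(g)$; dividing through by $\deg(g)$ then yields a uniform bound on $h_C(P)$. For the remaining finitely many $g$ of bounded degree --- not only the single ``resonant'' degree $c_j/c_i$ --- one uses Proposition~\ref{prop:all_g} to conclude there are only finitely many such $g$, hence finitely many curves $V_g$, each meeting $C$ in finitely many points since $C\not\subseteq V_g$. Your resonant/non-resonant dichotomy should therefore be replaced by the simpler large-degree/bounded-degree dichotomy; with that fix the argument is complete and matches the paper.

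One cosmetic point: $C$ need not be smooth, so the Mumford-type inequality should be applied on the normalization $\tilde C$ (as the paper does in Lemma~\ref{ineq_C}) and then pushed down to $C(\bar K)$.
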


					We expect Theorem \ref{dynBMZ_1} still holds
					in the non-preperiodic case: $C$ is assumed to be
					not contained in any preperiodic hypersurface,
					and $V$ ranges over all preperiodic hypersurfaces. 
					However, we could only prove
					a bound on the ``average height'' of points
					in the intersections (see Theorem
					\ref{thm:C_cap_V}). In fact, such bound on the average height 
					turns out to hold 
					for a more general polarized dynamical system (see Theorem
					 \ref{thm:H_cap_V}). We prove this general result
					 by using various constructions of heights and canonical
					 heights
					 coming from
					 Gillet-Soul\'e generalization of Arakelov
					 intersection theory (see \cite{BGS}, \cite{Zhang95}, and
					 \cite{Kawa06}). At the end of this section, we also briefly explain why our
					 results continue to hold for the dynamics of 
					 split polynomial maps of the form $(f_1,\ldots,f_n)$, where
					$f_1,\ldots,f_n$ are disintegrated polynomials of degrees at least 2.
					This seemingly more general case is left to the end in order
					to make it easier for the readers to follow the main ideas, and more 
					importantly because this case can be easily
					reduced to the diagonally split case $(f,\ldots,f)$.

					Part (a) of Theorem \ref{BMZ} is only
					the beginning of a long and unfinished story. Subsequent papers
					by various authors
					have considered bounded height results for
					higher dimensional complementary intersections
					in the torus $\Gmn$ or an abelian variety.
					We refer the readers to \cite{BMZ07}, \cite{Hab_torus}, and
					\cite{Hab_abvar} as well as the references there for more details.
					As far as we know, the results given in this section
					are the first to indicate that the above results in diophantine 
					geometry are expected to hold, at least to some extent,
					in arithmetic dynamics. We will treat the dynamical analogue of
					higher dimensional complementary intersections
					in a future work. In this paper, we will be content with intersection
					between a curve and preperiodic hypersurfaces in $(\bP^1)^n$.
					  

					Throughout this section, let $f\in K[X]$ be a disintegrated 
					polynomial. We use $h$ to denote the absolute logarithmic Weil height 
					on $\bP^1(\bar{K})$. We
					also use $h$ to denote
					the height on $(\bP^1)^n(\bar{K})$
					defined by $h(a_1,\ldots,a_n)=h(a_1)+\ldots+h(a_n)$. 
					For every polynomial $P\in \bar{K}[X]$
					of degree at least 2, we let $\hat{h}_P$
					denote the canonical height associated to $P$. We use
					$\hat{h}$ to denote the canonical height 
					$\hat{h}_f$. For
					properties of all these height functions, see \cite[Part B]{HS}
					and  \cite[Chapter 3]{Sil-ArithDS}.
					
					\subsection{Proof of Theorem \ref{dynBMZ_1}}
					 Since the projection from $C$ to each
					 factor $\bP^1$ is finite, to show that a collection
					 of points in $C(\bar{K})$ has bounded heights, it
					 suffices to show that for some $1\leq i\leq n$,
					 all their $x_i$-coordinates have bounded heights.
					 By the Medvedev-Scanlon Theorem, it suffices
					 to show that for every $1\leq i<j\leq n$, points
					 in $\bigcup C(\bar{K})\cap V_{ij}(\bar{K})$
					 have bounded heights where $V_{ij}$ ranges over
					 all periodic hypersufaces whose equation involving 
					 $x_i$ and $x_j$ only. Therefore we may assume $n=2$
					 for the rest of this subsection.
					 Let $x$ and $y$ denote the
					 coordinate functions on the first and second factor $\bP^1$
					 respectively. Without loss of generality, we only need to
					 consider the intersection with periodic curves $V$ given by an 
					 equation
					 of the form $x=\zeta$ where $\zeta$
					 is $f$-periodic, or $y=g(x)$ where  $g$ commutes
					 with an iterate of $f$. Now every periodic $\zeta$ has height 
					 bounded
					 uniformly, we get the desired conclusion
					 when intersecting $C$ with curves of the form $x=\zeta$. Note
					 that this argument also works for all
					 preperiodic $\zeta$. 
					 
					 So we only have to consider curves $V$ of the form 
					 $y=g(x)$. Let $(M,N)$ denote the type
					 of the divisor $C$ of $(\bP^1)^2$. Explicitly,
					 we choose a generator $F(x,y)$ 
					 of the (prime) ideal of $C$ in  $\bar{K}[x,y]$,
					 then $F$ has degree $M$ in $x$ and degree $N$ in $y$.
					 We have the following two easy lemmas:
					 
					 \begin{lemma}\label{ineq_C}
					 For every point $(\alpha,\beta)$ in $C(\bar{K})$,
					 we have:
					 \begin{equation}\label{c1_c2}
					 	|M\hat{h}(\alpha)-N\hat{h}(\beta)|\leq c_1\sqrt{\hat{h}(\alpha)+\hat{h}(\beta)+1}+c_2.
					 \end{equation}
					 where $c_1$ and $c_2$ are constants independent of $(\alpha,\beta)$.
					 \end{lemma}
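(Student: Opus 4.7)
The plan is to invoke the standard ``square-root'' comparison theorem for Weil heights associated to line bundles of the same degree on a projective curve. Specifically, on any irreducible projective curve $C$, if $L$ and $L'$ are line bundles of equal degree, then the associated Weil heights satisfy
$$|h_L(P)-h_{L'}(P)|\leq c\sqrt{h_D(P)+1}+c'$$
for any ample $D$; this reflects the fact that $L-L'$ has degree zero, and heights of degree-zero classes are dominated by the square root of a reference ample height (a consequence of N\'eron's theory, see \cite[Theorem B.5.9]{HS}).

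The first step is to identify the natural line bundles on $C$ whose Weil heights recover $h(\alpha)$ and $h(\beta)$. Let $\pi_1,\pi_2\colon C\to\bP^1$ denote the two projections. Since the defining polynomial $F(x,y)$ of $C$ has degree $M$ in $x$ and degree $N$ in $y$, the fiber of $\pi_1$ over a generic $x_0$ consists of the $N$ roots of $F(x_0,y)$, so $\deg\pi_1=N$; similarly $\deg\pi_2=M$. Consequently the line bundles $L_1:=\pi_1^{\ast}\scrO_{\bP^1}(1)$ and $L_2:=\pi_2^{\ast}\scrO_{\bP^1}(1)$ on $C$ have degrees $N$ and $M$ respectively, and their associated Weil heights are $h_{L_1}(P)=h(\alpha)+O(1)$ and $h_{L_2}(P)=h(\beta)+O(1)$ for every $P=(\alpha,\beta)\in C(\bar{K})$.

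The second step is to combine the bundles so they have equal degree: $ML_1$ and $NL_2$ both have degree $MN$ on $C$. Taking $L_1+L_2$ as the reference ample bundle (whose height is $h(\alpha)+h(\beta)+O(1)$) and applying the comparison theorem to $ML_1$ and $NL_2$ gives
$$|Mh(\alpha)-Nh(\beta)|\leq c_1'\sqrt{h(\alpha)+h(\beta)+1}+c_2'.$$
Finally, since $f$ is a polynomial of degree at least $2$, we have the uniform estimate $|\hat{h}(\gamma)-h(\gamma)|=O(1)$ on $\bP^1(\bar{K})$ (see \cite[Chapter 3]{Sil-ArithDS}). Substituting this for both $\alpha$ and $\beta$ and adjusting constants yields the claimed inequality. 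No serious obstacle is expected; the only point to check carefully is that the constants arising are absolute (independent of the chosen point $(\alpha,\beta)\in C(\bar{K})$), which follows because both the height comparison on $C$ and the difference $|\hat{h}-h|$ are uniform.
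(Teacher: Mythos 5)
Your proposal is correct and follows essentially the same route as the paper: identify the line bundles $\pi_1^{*}\scrO(1)$ and $\pi_2^{*}\scrO(1)$ on $C$ of degrees $N$ and $M$, observe that $ML_1 - NL_2$ has degree zero, apply \cite[Theorem B.5.9]{HS} to bound the associated height by the square root of an ample reference height, and convert from Weil to canonical heights via the uniform $O(1)$ estimate. The only detail you elide that the paper makes explicit is the passage to the normalization $\tilde{C}$ of $C$: Theorem B.5.9 is stated for smooth projective varieties, so one should pull everything back to $\tilde{C}$ (heights and degrees are preserved under this pullback) before invoking it; this is a routine step that does not affect the substance of the argument.
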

					 \begin{proof}
					 	Let $\tilde{C}$ denote the normalization of $C$, we have:
			\begin{equation}\label{data_C}
			\tilde{C}\stackrel{\eta}{\longrightarrow}C\stackrel{i}
			{\longrightarrow}(\bP^1_K)^2
			\end{equation}
			where $\eta$ is the normalization map and $i$ is
			the closed embedding realizing $C$ as a subvariety of $(\bP^1_K)^2$.The invertible sheaf $\scrL:=(\eta\circ i)^{*}\scrO(1,1)$
			is ample on $\tilde{C}$.
			Let $\pi_1$ and $\pi_2$ denote respectively the first
			and second projections from $(\bP^1_K)^2$ to $\bP^1_K$. The 
			invertible sheaves
			$\scrL_1:=(\eta\circ i\circ\pi_1)^{*}\scrO(1)$ and
			$\scrL_2:=(\eta\circ i\circ\pi_2)^{*}\scrO(1)$ have degrees
			$N$ and $M$, respectively. 
			
			For $j=1,2$, define $\tilde{h}_j(P)=h(\pi_j\circ i\circ\eta (P))$ 
			for every
			$P\in \tilde{C}(K)$. We also define
			$\tilde{h}(P)=h(i\circ\eta(P))$ for every $P\in \tilde{C}(K)$. Then
			$\tilde{h}$, $\tilde{h}_1$ and $\tilde{h}_2$ respectively
			are  height functions on $\tilde{C}(K)$ corresponding
			$\scrL$, $\scrL_1$ and $\scrL_2$. By \cite[Theorem B.5.9]{HS},
			there is a constant $c_1>0$ depending only on 
			the data (\ref{data_C}) such that:
			\begin{equation}\label{ineq:1}
				|M\tilde{h}_1(P)-N\tilde{h}_2(P)|\leq c_1\sqrt{\tilde{h}(P)+1}\ \ 
				\forall P\in \tilde{C}(K)
				\end{equation}
		
			For every point $(\alpha,\beta)\in C(K)$, inequality (\ref{ineq:1})
			gives:
			\begin{equation}\label{ineq:2}
				|Mh(\alpha)-Nh(\beta)| \leq c_1\sqrt{h(\alpha)+h(\beta)+1}.
			\end{equation}
			In term of the canonical height function
			associated to $f$, inequality (\ref{ineq:2})
			becomes:
			\begin{equation}\label{ineq:3}
				|M\hat{h}(\alpha)-N\hat{h}(\beta)|\leq c_1\sqrt{\hat{h}(\alpha)
				+\hat{h}(\beta)+1}+c_2
			\end{equation}
			where $c_2$ only depends
			on $f$ and the data (\ref{data_C}).
					 	\end{proof}
					 
%
	
	\begin{lemma}\label{same_cano}
	Let $P\in\bar{K}[X]$ be a disintegrated polynomial, $\scrG$
	a finite cyclic subgroup of linear polynomials in $\bar{K}[X]$
	such that for some positive integer $D$, we have $P\circ L=L^D\circ P$ for every $L\in\scrG$. We have:
	\begin{itemize}
	\item [(a)] $\displaystyle \hat{h}_{P}=\hat{h}_{L\circ P^l}$
	for every $l>0$ and every $L\in \scrG$.   
	\item [(b)] $\displaystyle 
	\hat{h}_{P}(L(\alpha))=\hat{h}_{P}(\alpha)$ for every $L\in \scrG$ 
	and $\alpha\in\bP^1(\bar{K})$.
	\end{itemize}
	\end{lemma}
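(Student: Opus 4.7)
My plan is to reduce everything to two clean computations. For part (a), I would first show by induction on $l \geq 1$ that the hypothesis $P \circ L = L^D \circ P$ iterates to give
\[
P^l \circ L = L^{D^l} \circ P^l \quad \text{for every } L \in \scrG;
\]
the inductive step applies the hypothesis to $L^{D^l} \in \scrG$, which is where I use that $\scrG$ is closed under powers. I then compute $(L \circ P^l)^n$ by pulling the inner linear factors to the left one at a time, obtaining
\[
(L \circ P^l)^n = L^{1 + D^l + D^{2l} + \cdots + D^{(n-1)l}} \circ P^{nl}.
\]
Since $\scrG$ is finite of some order $m$, the exponent reduces modulo $m$ to produce an element $L_n \in \scrG$, and so $(L \circ P^l)^n(\alpha) = L_n(P^{nl}(\alpha))$. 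Because $\scrG$ is a finite set of linear polynomials, there is a uniform constant $C$ with $|h(L_n(\beta)) - h(\beta)| \leq C$ for all $n$ and every $\beta \in \bP^1(\bar K)$. Writing $d = \deg P$, dividing by $\deg\bigl((L \circ P^l)^n\bigr) = d^{ln}$ and letting $n \to \infty$ yields
\[
\hat{h}_{L \circ P^l}(\alpha) = \lim_{n \to \infty} \frac{h(P^{nl}(\alpha))}{d^{ln}} = \hat{h}_P(\alpha),
\]
since the subsequence indexed by multiples of $l$ converges to the same limit as the full sequence defining $\hat{h}_P$.

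For part (b), I would specialize part (a) to $l = 1$ to get $\hat{h}_{L \circ P} = \hat{h}_P$, and then apply the functional equation of the canonical height to $L \circ P$, which has degree $d$:
\[
\hat{h}_{L \circ P}\bigl((L \circ P)(\alpha)\bigr) = d \cdot \hat{h}_{L \circ P}(\alpha).
\]
Combining with $\hat{h}_{L \circ P} = \hat{h}_P$ gives
\[
\hat{h}_P\bigl(L(P(\alpha))\bigr) = d \cdot \hat{h}_P(\alpha) = \hat{h}_P(P(\alpha)).
\]
Since $P \colon \bP^1(\bar K) \to \bP^1(\bar K)$ is surjective, every $\beta \in \bP^1(\bar K)$ has the form $P(\alpha)$ for some $\alpha$, so $\hat{h}_P(L(\beta)) = \hat{h}_P(\beta)$.

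The only mildly delicate point is the bookkeeping in the iterated commutation formula for $(L \circ P^l)^n$; beyond that, the argument only uses the definition of the canonical height together with the fact that heights differing by a linear change of coordinates differ by a bounded additive constant, with the crucial observation that this constant can be chosen uniformly in $n$ precisely because $\scrG$ is finite.
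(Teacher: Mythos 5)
Your proof of part (a) follows essentially the same approach as the paper: derive the iterated commutation identity so that $(L\circ P^l)^k=\tilde{L}\circ P^{kl}$ for some $\tilde{L}\in\scrG$, use finiteness of $\scrG$ to get a uniform additive bound on the height change under any $\tilde{L}\in\scrG$, then divide by $d^{kl}$ and let $k\to\infty$; the paper simply asserts the identity $(L\circ P^l)^k=\tilde{L}\circ P^{kl}$ without carrying the explicit exponent $1+D^l+\cdots+D^{(k-1)l}$, but the content is the same.

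For part (b) you depart from the paper. The paper says ``Part (b) is proved similarly,'' meaning the intended direct argument: $P^k\circ L=L^{D^k}\circ P^k$ gives $h(P^k(L(\alpha)))=h(L^{D^k}(P^k(\alpha)))=h(P^k(\alpha))+O(1)$ with $L^{D^k}\in\scrG$, so dividing by $d^k$ and letting $k\to\infty$ gives $\hat{h}_P(L(\alpha))=\hat{h}_P(\alpha)$ directly. You instead deduce (b) from (a): with $l=1$, $\hat{h}_{L\circ P}=\hat{h}_P$, the functional equation yields $\hat{h}_P(L(P(\alpha)))=d\,\hat{h}_P(\alpha)=\hat{h}_P(P(\alpha))$, and surjectivity of $P$ on $\bP^1(\bar K)$ finishes. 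Both routes are correct and short; the direct argument is more uniform with the proof of (a) and does not invoke surjectivity, while your deduction is a clean demonstration that (b) is formally a consequence of (a) plus the functional equation. Either is acceptable.
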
				
	\begin{proof}
		Since $\scrG$ is finite, there is $\epsilon$ such that: 
		$$h(x)-\epsilon\leq h(L(x))\leq h(x)+\epsilon\ \forall x\in \bar{K}\ \forall L\in \scrG.$$
		For every $k\geq 1$, we have $(L\circ P^l)^k=\tilde{L}\circ P^{kl}$
		for some $\tilde{L}\in \scrG$. And we have:
			$$h((L\circ P^l)^k(x))=h(P^{kl}(x))+O(1)$$
			where $O(1)$ is bounded
			independently of $k$. Dividing both sides by
			$\deg(P^{kl})$ and let $k\to\infty$ will kill off this $O(1)$.
			Part (b) is proved similarly.
	\end{proof}

	We can now finish the proof of Theorem \ref{dynBMZ_1}.	Let
	$V$ be given by $y=g(x)$ and 
	$(\alpha,\beta)$ be a point in the intersection $C\cap V$. By Lemma
	\ref{same_cano}, we have
	$\hat{h}(\beta)=\deg(g)\hat{h}(\alpha)$. Substituting this
	into (\ref{c1_c2}), we have:
	\begin{equation}\label{main_ineq}
		|M-N\deg(g)|\hat{h}(\alpha)\leq c_1\sqrt{(\deg(g)+1)\hat{h}(\alpha)+1}+c_2
	\end{equation}

	For all sufficiently large $\deg(g)$ (for instance, we may choose
	$\displaystyle\deg(g)>\frac{2M}{N}$
	so that $\displaystyle N\deg(g)-M>\frac{N\deg(g)}{2}$), inequality
	(\ref{main_ineq}) implies
	that $\hat{h}(\alpha)$ and hence $h(\alpha)$ is bounded above
	by a constant depending only
	on $f$ and the data (\ref{data_C}). Therefore by
	the remark at the first paragraph of
	this subsection, $h(\alpha,\beta)$ is bounded
	by a constant depending only on $f$ and the data
	(\ref{data_C}). Finally, by 
	Proposition \ref{prop:all_g}, there
	are only finitely many such $g's$ of bounded degree, hence
	only finitely many points in the intersection
	$C\cap \{y=g(x)\}$. This finishes
	the proof of Theorem \ref{dynBMZ_1}.
	  
	
	
	\subsection{Further Questions}
	We now gather several questions concerning the union $\displaystyle 
	\bigcup_{V}(C(\bar{K})\cap V(\bar{K}))$ where $V$ ranges over preperiodic
	hypersurfaces in $(\bP^1_{\bar{K}})^n$ and $C$ is not contained
	in any such hypersurface. For each $k\geq 0$, let 
	$\scrP_k$ denote the collection of all hypersurfaces $V$
	of $(\bP^1_{\bar{K}})^n$ such that $\varphi^k(V)$
	is periodic. Thus $\scrP_0$ is exactly
	the collection of periodic hypersurfaces, and we have
	$\scrP_{k}\subseteq\scrP_{k+1}$ for every $k$.
	Apply Theorem 
	\ref{dynBMZ_1}
	for $\varphi^k(C)$, let $\Gamma_k$
	denote an upper bound for the $f$-canonical heights of 
	points in 
	$$\bigcup_{V\in\scrP_0}(\varphi^k(C)(\bar{K})\cap V(\bar{K})).$$
	Using
	$$\varphi^{k}(\bigcup_{V\in\scrP_k}(C(\bar{K})\cap V(\bar{K})))
		\subseteq \bigcup_{V\in\scrP_0}(\varphi^k(C)(\bar{K})\cap V(\bar{K})),$$
	we have that points in $\displaystyle \bigcup_{V\in\scrP_k}(C(\bar{K})\cap V(\bar{K}))$
	have canonical heights bounded by $\displaystyle \frac{\Gamma_k}{d^k}$
	where $d\geq 2$ is the degree of $f$. Heuristically speaking,
	suppose we could obtain a bound in Theorem \ref{dynBMZ_1}
	that depends, in a uniform way, on the ``complexity'' of $C$, and the 
	``complexity''
	of $\varphi^k(C)$ is ``essentially'' the ``complexity'' of $C$ 
	multiplied 
	by $d^k$. Then we have that $\Gamma_k=d^k O(1)$
	where $O(1)$ is independent of $k$. All of these motivate
	the following questions. From now on, we
	assume $K$ is a number field although
	the first two questions could be asked for
	function fields as well:  
	\begin{question}\label{Q}
	Let $f$ and $\varphi$ be as in Theorem \ref{dynBMZ_1}.
	\begin{itemize}
		\item [(a)] Let $C$ be an irreducible non-vertical 
		curve in $(\bP^1_{\bar{K}})^n$. Suppose $C$ is not contained
		in an element of $\scrP_k$. Is it true
		that points in
				$$\bigcup_{V\in\scrP_k}(C(\bar{K})\cap V(\bar{K}))$$
				have heights bounded independently of $k$.
		\item [(b)] Let $C$ be an irreducible non-vertical 
		curve in $(\bP^1_{\bar{K}})^n$ that is not contained
		in any preperiodic hypersurface. 
		Is it true
		that points in
				$$\bigcup_{V}(C(\bar{K})\cap V(\bar{K}))$$
				have bounded heights, where $V$ ranges
				over all preperiodic hypersurfaces of $(\bP^1_{\bar{K}})^n$?
		\item [(c)] Let $C$ be as in part (b).
		Is it true that the union
		in (b) have only finitely points of bounded degree?
		\item [(d)] Let $C$ be as in (b). Assume $C$ is
		defined over $K$. Is it true that the union in (b)
		have only finitely many $K$-rational points?
	\end{itemize}
	\end{question}
	
	It is obvious that these questions have decreasing strength. We now focus on Question \ref{Q}(b). We look more closely
	to the proof of Theorem \ref{dynBMZ_1} and see what still go
	through. Assume $f$, $\varphi$ and $C$ as in part (b) of Question
	\ref{Q}. As before, we can assume  
	$V$ ranges over all irreducible 
	preperiodic hypersurfaces. Let $k\geq0$
	such that $\varphi^k(V)$ is periodic, hence given
	by an equation of the form, say, $x_j=g(x_i)$ where
	$1\leq i<j\leq n$
	(the case $\varphi^k(V)$ is given by $x_i=\zeta$ where $\zeta$
	is preperiodic is easy). We can now assume $n=2$ by projecting to the 
	$(i,j)$-factor $(\bP^1)^2$ of $(\bP^1)^n$. Let $(\alpha,\beta)\in
	C(\bar{K})\cap V(\bar{K})$. From
	$f^k(\alpha)=g(f^k(\beta))$ and
	Lemma \ref{same_cano}, we still have
	$\hat{h}(\alpha)=\deg(g)\hat{\beta}$. Therefore inequality 
	(\ref{main_ineq}) still holds. We still have that
	$h(\alpha,\beta)$ is bounded when $\deg(g)$ is sufficiently large.
	Since there are only finitely many $g$'s of bounded degrees (see
	Proposition \ref{prop:all_g}), one
	may assume that the periodic hypersurface $\{x_j=g(x_i)\}$ is fixed. 
	Our discussions so far implies that Question
	\ref{Q}(b) is equivalent to the following:
	
	\begin{question}\label{fix_g}
	Let $f$, $\varphi$ and $C$ be as in Question \ref{Q}(b). Let $W$
	be a fixed irreducible periodic hypersurface of $(\bP^1_{\bar{K}})^n$.
	For $k\geq 0$, write $\varphi^{-k}(W)$
	to denote $(\varphi^{k})^{-1}(W)$. Is it true that 
	points in $\bigcup_{k\geq 0} C(\bar{K})\cap \varphi^{-k}(W)(\bar{K})$
	have bounded heights?
	\end{question}
	
	We  now focus on Question \ref{fix_g}. We could only prove a weaker result, namely points in 
	$C(\bar{K})\cap \varphi^{-k}(W)(\bar{K})$ have bounded ``average heights'' independent of $k$ (see Subsection \ref{sub:average}). Such a
	result is motivated by examples given in the next subsection.
	
	\subsection{Examples}
	Let $f$, $\varphi$, $W$ and $C$ be as in Question \ref{fix_g}.
	We may assume $n=2$ and $W$ is given by $y=g(x)$ where $g$ commutes
	with an iterate of $f$. In this subsection, we look at the case when
	$C$ is a rational curve parametrized by $(P(t),Q(t))$ where
	$P$ and $Q$ are polynomials with coefficients in $\bar{K}$. Question
	\ref{fix_g} asks whether roots of $f^k\circ Q=g\circ f^k\circ P$
	have heights bounded independently of $k$. 
	Note that if $\alpha$ is such a root
	then $\hat{h}(Q(\alpha))=\deg(g)\hat{h}(P(\alpha))$
	by Lemma \ref{same_cano}. Therefore
	$|\deg(Q)-\deg(g)\deg(P)|h(\alpha)$ is bounded independently
	of $k$. Hence if $\deg(g)\deg(P)\neq \deg(Q)$ then
	Question \ref{fix_g} has an affirmative answer. For the rest 
	of this subsection, we may assume
	$\deg(g)\deg(P)=\deg(Q)$.
	
	Since $g$ commutes with an iterate $f^l$ of $f$,
	we may look at $l$ collections of equations of the form
		$$f^{ql+r}\circ Q = g\circ f^{ql+r}\circ P = f^{ql}\circ 
		g\circ f^{r}\circ P\ \ \text{for $0\leq q$,}$$
	for each $0\leq r<l$. Replacing $(P,Q)$ by
	$(g\circ f^r\circ P,f^r\circ Q)$, we may
	assume $g(x)=x$ (i.e. $V_0$ is the diagonal),
	 and hence $\deg(P)=\deg(Q)$.
	For every $k\geq 0$, put $G_k=f^k\circ P-f^k\circ Q$. We need
	to show that roots of $G_k$ have heights bounded independently of $k$.
	By making a linear change, we can assume $f$ has
	the following form:
	$$f(X)=X^d+a_{d-2}X^{d-2}+\ldots+a_0.$$
	We have the following:
	\begin{lemma}\label{zeta_12}
		The linear automorphism of the Julia set of $f$ is a cyclic group
		$\mathscr{G}(f)$ of order $M$. Let $D\in \Z$ 
		such that $f\circ L=L^D\circ f$ for every $L\in \mathscr{G}(f)$. 
		If $\zeta_1$ and $\zeta_2$ are 
		two roots of unity such that $f(\zeta_1 X)=\zeta_2 f(X)$
		then $\zeta_1\in \mathscr{G}(f)$, and $\zeta_2=\zeta_1^D$.
	\end{lemma}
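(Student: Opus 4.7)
The plan is to translate the functional equation $f(\zeta_1 X)=\zeta_2 f(X)$ into algebraic constraints on $\zeta_1$, deduce from these that multiplication by $\zeta_1$ preserves the filled Julia set of $f$, and then read off the value of $\zeta_2$ from the defining property of $D$.

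First, expanding both sides of $f(\zeta_1 X)=\zeta_2 f(X)$ with $f(X)=X^d+a_{d-2}X^{d-2}+\ldots+a_0$ and comparing coefficients, the leading term yields $\zeta_2=\zeta_1^d$, and for each $0\le k\le d-2$ with $a_k\ne 0$ one obtains $a_k\zeta_1^k=\zeta_2 a_k$, hence $\zeta_1^{d-k}=1$.

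Next, I would prove by induction on $n$ that $f^n(\zeta_1 X)=\zeta_1^{d^n}f^n(X)$ for every $n\ge 0$. The key observation is that for every integer $m$, the element $\zeta_1^m$ still satisfies $(\zeta_1^m)^{d-k}=(\zeta_1^{d-k})^m=1$ for each $k$ with $a_k\ne 0$; repeating the coefficient calculation with $\zeta_1^m$ in place of $\zeta_1$ therefore gives $f(\zeta_1^m X)=\zeta_1^{md}f(X)$. The inductive step then reads
\[
f^{n+1}(\zeta_1 X)=f(\zeta_1^{d^n}f^n(X))=\zeta_1^{d^{n+1}}f^{n+1}(X).
\]
By the Lefschetz principle I may view $f$ as a complex polynomial; since $|\zeta_1^{d^n}|=1$, the identity just proved implies $|f^n(\zeta_1 z)|=|f^n(z)|$ for every $z\in\C$. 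Hence multiplication by $\zeta_1$ preserves the filled Julia set $K(f)=\{z:\{f^n(z)\}_n \text{ bounded}\}$, and therefore its boundary $J(f)$, so $L_1(X):=\zeta_1 X$ lies in $\mathscr{G}(f)$.

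With $L_1\in\mathscr{G}(f)$ in hand, the defining property of $D$ gives $f\circ L_1=L_1^D\circ f$, while the hypothesis $f(\zeta_1 X)=\zeta_2 f(X)$ is exactly $f\circ L_1=L_2\circ f$ with $L_2(X)=\zeta_2 X$. Since $f$ is a non-constant polynomial and therefore right-cancellable, I conclude $L_2=L_1^D$, i.e., $\zeta_2=\zeta_1^D$. The only substantive step is the Julia set argument establishing $L_1\in\mathscr{G}(f)$; the rest is coefficient comparison and bookkeeping, and I anticipate no genuine obstacle beyond carefully verifying that the algebraic condition $\zeta^{d-k}=1$ for all $k$ with $a_k\ne 0$ is preserved under taking the powers $\zeta_1^{d^n}$ that appear in the induction.
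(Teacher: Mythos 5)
Your proof is correct, and it supplies a self-contained argument for a statement the paper disposes of by simply citing \cite{Beardon} and \cite{SchSte}. Since the paper gives no proof of its own, there is no ``paper's approach'' to compare against; what you have written is essentially the standard argument one finds in the cited references, and every step checks out. In particular: comparing coefficients of $f(\zeta_1 X)=\zeta_2 f(X)$ for the normalized $f(X)=X^d+a_{d-2}X^{d-2}+\cdots+a_0$ (the normalization is in force in the paper immediately before the lemma is stated) correctly yields $\zeta_2=\zeta_1^d$ and $\zeta_1^{d-k}=1$ whenever $a_k\neq 0$; the observation that these relations persist for all powers $\zeta_1^m$ is what makes the induction $f^n(\zeta_1 X)=\zeta_1^{d^n}f^n(X)$ go through; $|\zeta_1|=1$ then forces $\zeta_1\cdot$ to preserve the filled Julia set and hence its boundary $J(f)$, giving $\zeta_1 X\in\mathscr{G}(f)$; and right-cancelling $f$ (or simply comparing $\zeta_1^D f(X)$ with $\zeta_2 f(X)$ directly) gives $\zeta_2=\zeta_1^D$. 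One minor remark: your computation already shows $\zeta_2=\zeta_1^d$, so the final step is really the consistency check that $\zeta_1^D=\zeta_1^d$, which holds because $D\equiv d$ on $\mathscr{G}(f)$ by the same coefficient comparison applied to any $L\in\mathscr{G}(f)$. No gaps.
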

	\begin{proof}
	This is a classical result in complex dynamics, see \cite{Beardon}
	and \cite{SchSte}.
	\end{proof}
	
	\begin{proposition}\label{prop:average}
		\begin{itemize}
			\item [(a)] There is a constant $c_3$ such that
			$$\deg(G_k)\geq c_3d^k\ \ \forall k.$$
			\item [(b)] There is a constant $c_4$
			such that the affine height of $G_k$ (\cite[Part B]{HS}) satisfies:
					$$h(G_k)\leq c_4 d^k\ \ \forall k.$$
			\item [(c)] The average height of the roots of $G_k$
			are bounded independently of $k$: there is $c_5$ such that:
					$$\frac{1}{\deg(G_k)}\displaystyle\sum_{G_k(\alpha)=0} h(\alpha)\leq c_5
					\ \ \forall k,$$
					where we allow repeated roots to appear multiple times
					in $\sum$. 
		\end{itemize}
	\end{proposition}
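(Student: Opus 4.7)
My plan is to prove (b) by direct iteration of standard height bounds, (a) by leading-coefficient analysis of $f^k(P(t))$ and $f^k(Q(t))$ together with Lemma \ref{zeta_12}, and (c) by combining (a) and (b) with the classical Mahler-measure inequality. Part (b) is straightforward: starting from the elementary estimate $h(f\circ R)\le d\,h(R)+h(f)+O(\deg R)$ valid for any polynomial $R$, iteration yields $h(f^k\circ P)=O(d^k)$ and similarly for $Q$, hence $h(G_k)\le h(f^k\!\circ\! P)+h(f^k\!\circ\! Q)+\log 2=O(d^k)$, producing the constant $c_4$.

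For (a), write $n=\deg P=\deg Q$ with leading coefficients $p_n,q_n$. Since $f$ is monic, $f^k(P(t))$ has degree $d^kn$ and leading coefficient $p_n^{d^k}$, and similarly for $Q$, so if $p_n^{d^k}\neq q_n^{d^k}$ then $\deg G_k=d^kn$ immediately. Otherwise set $A=f^{k-1}P$, $B=f^{k-1}Q$ with leading coefficients $\alpha=p_n^{d^{k-1}}$, $\beta=q_n^{d^{k-1}}$, and invoke the factorization
\[
f(A)-f(B)=(A-B)\sum_{j=1}^{d}a_j\sum_{i=0}^{j-1}A^{j-1-i}B^i\qquad(a_d=1).
\]
When $\alpha=\beta$, the second factor has leading term $d\alpha^{d-1}t^{(d-1)d^{k-1}n}$, yielding the recursion $\deg G_k=\deg G_{k-1}+(d-1)d^{k-1}n\ge(d-1)d^{k-1}n$. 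When $\alpha\neq\beta$ but $\alpha^d=\beta^d$, write $\alpha=\omega\beta$ with $\omega^d=1$, $\omega\neq 1$; the factorization $A^d-B^d=\prod_{\zeta^d=1}(A-\zeta B)$ shows $d-1$ of the factors retain full degree $d^{k-1}n$, so $\deg(A^d-B^d)\ge(d-1)d^{k-1}n$ unless $A\equiv\omega B$. In the residual subcase $A\equiv\omega B$, the identity $G_k=f(\omega B)-f(B)$ together with Lemma \ref{zeta_12} and Proposition \ref{prop:all_g}(c) rules out $f(\omega Y)\equiv f(Y)$: if it held, Lemma \ref{zeta_12} would force $\omega\in\mathscr{G}(f)$ with $\omega^D=1$, and the coprimality $\gcd(D,|\mathscr{G}(f)|)=1$ would give $\omega=1$, a contradiction with $G_k\not\equiv 0$ (which holds by the hypothesis that $C$ lies in no preperiodic hypersurface). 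Hence some coefficient $a_j(\omega^j-1)$ with $1\le j\le d-2$ survives, and $\deg G_k\ge d^{k-1}n$. In all cases $\deg G_k\ge c_3 d^k$ with $c_3=n/d$.

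For (c), I would apply the classical bound $\sum_i h(\alpha_i)\le h(G)+(\deg G)\log 2$ (see \cite[\S B.7]{HS}) to $G=G_k$; combined with (a) and (b) this gives
\[
\frac{1}{\deg G_k}\sum_{G_k(\alpha)=0}h(\alpha)\le\frac{c_4 d^k}{c_3 d^k}+\log 2=O(1),
\]
producing $c_5$. The main obstacle is the subcase $A\equiv\omega B$ in part (a): ruling out a complete collapse $f(\omega Y)\equiv f(Y)$ relies crucially on the rigidity of Lemma \ref{zeta_12} combined with the coprimality in Proposition \ref{prop:all_g}(c), and locating the lowest-order surviving term of $f(\omega Y)-f(Y)$ to produce the quantitative lower bound requires careful bookkeeping. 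Everything else is standard height machinery.
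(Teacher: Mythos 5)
Your part (b) has a genuine gap. Iterating the estimate $h(f\circ R)\le d\,h(R)+h(f)+O(\deg R)$ does \emph{not} yield $h(f^k\circ P)=O(d^k)$: writing $b_k=h(f^k\circ P)$ and $\deg(f^{k-1}\circ P)=d^{k-1}\deg P$, the recursion $b_k\le d\,b_{k-1}+O(d^{k-1})$ unwinds to $b_k\le d^k b_0 + O(k\,d^{k-1})$, and $k\,d^{k-1}$ is \emph{not} $O(d^k)$ because $k\,d^{k-1}/d^k=k/d\to\infty$. The $O(\deg R)$ error term, which is what the Gelfond-type inequalities in \cite[Part B]{HS} actually give for the height of a product of polynomials, accumulates fatally over the $k$ composition steps. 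The paper's proof of (b) avoids this by factoring $f^k\circ P=\prod_{i=1}^{d^k}(P-r_i)$ over the roots $r_i$ of $f^k$ and using the identity $\hat h_f(r_i)=\hat h_f(0)/d^k$ to get $\sum_i h(r_i)=\hat h_f(0)+O(d^k)$; the resulting total error from the product formula is then $O(d^k)$ rather than $O(k\,d^{k-1})$. You would need either this canonical-height argument or a sharper composition bound with error $O(\log\deg R)$ (which does hold with a more careful coefficient count, but is not what you quoted).

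For part (a), your leading-coefficient case analysis is a genuinely different route from the paper's, which instead observes that the ratio of leading coefficients of $f^k\circ P$ and $f^k\circ Q$ ceases to be a root of unity for $k\ge\mu$ and then factors $(f^\mu\circ Q)^N-(f^\mu\circ P)^N=\prod_{\zeta^N=1}(f^\mu\circ Q-\zeta\,f^\mu\circ P)$ with $N=d^{k-\mu}$. Your residual-subcase argument contains a false step: the coprimality $\gcd(D,|\mathscr{G}(f)|)=1$ does not hold in general. For instance if $f(X)=X^d+a_{d-2}X^{d-2}+\cdots$ is even of even degree and disintegrated, then $-X\in\mathscr{G}(f)$ and $f(-X)=f(X)$, so $M=|\mathscr{G}(f)|$ is even while $D\equiv d\equiv 0\pmod 2$, giving $\gcd(D,M)\ge 2$. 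Proposition~\ref{prop:all_g}(c) only asserts coprimality of $D$ with $|M(f^\infty)|$, a possibly proper subgroup of $\mathscr{G}(f)$, and Lemma~\ref{zeta_12} makes no coprimality claim at all. Fortunately the subcase is handled more directly: $f(\omega Y)\equiv f(Y)$ would force $G_k\equiv 0$, i.e.\ $\varphi^k(C)$ lies in the diagonal, contradicting the hypothesis on $C$ with no need for Lemma~\ref{zeta_12} or coprimality. Your part (c), modulo the bounds of (a) and (b), follows the same route as the paper.
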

	\begin{proof}
		For part (a): if there are $k_1<k_2$ and roots of unity
		$\zeta_1,\zeta_2$ such that: 
		$$f^{k_i}\circ Q=\zeta_i f^{k_i}\circ P\ \text{for $i=1,2$,}$$
		 then we have that $$f^{k_2-k_1}(\zeta_1 X)=\zeta_2 f^{k_2-k_1}(X).$$
		 By Lemma \ref{zeta_12}, we have that $\zeta_1\in\mathscr{G}(f)$.
		 Hence the curve $\varphi^{k_1}(C)$ which is given
		 by $y=\zeta_1 x$ is
		 preperiodic, contradiction. Therefore, there exists
		 $\mu$ such that $\displaystyle \frac{{f^k\circ Q}}{f^k\circ P}$
		 is not a root of unity for $k\geq\mu$. 
		 Write:
		 $$f^{k-\mu}(X)=X^{d^{k-\mu}}+b_{d^{k-\mu}-2}X^{d^{k-\mu}-2}+...$$
		 We have (this is the only place where we do not follow our convention on notation: $N$ in the exponent means the usual
		 ``raising to the $N^{\text{th}}$ power''
		 instead of ``taking the $N^{\text{th}}$ iterate''):
		 $$(f^{\mu}\circ Q)^N-(f^{\mu}\circ 
		 P)^N=\prod_{\zeta^N=1}(f^{\mu}\circ Q-\zeta f^{\mu}\circ P).$$
		 Since at most one factor has
		 degree lower than $d^{\mu}\deg(P)$, and that factor
		 is a nonzero polynomial, we have:
		 $$\deg((f^{\mu}\circ Q)^N-(f^{\mu}\circ 
		 P)^N)\geq (N-1)d^{\mu}\deg(P).$$
		 Therefore:
		 $$\deg(G_k)=\deg(f^{k-\mu}\circ f^\mu\circ Q-f^{k-\mu}\circ f^\mu\circ Q)\geq (d^{k-\mu}-1)d^{\mu}\deg(P).$$
		 This finishes part (a).
		 
		 For part (b), it suffices to show there are constants
		 $\epsilon_1$ and $\epsilon_2$ such that $h(f^k\circ P)\leq \epsilon_1 d^{k}$ 
		 and $h(f^k\circ Q)\leq \epsilon_2 d^{k}$ for every $k$. By similarity, we only
		 need to prove the existence of $\epsilon_1$.
		 Let $r_1,...,r_{d^k}$ denote the roots of $f^k$. Since
		 $\displaystyle \hat{h}_{f}(r_i)=\frac{\hat{h}_{f}(0)}{d^k}$,
		 we have:
		 \begin{equation}\label{hri}
		 	\sum_{i=1}^{d^k} h(r_i)=\hat{h}_{f}(0)+O(1)d^k
		 \end{equation}
		 where $O(1)$ only depends on $f$ (since we change
		 from canonical height to Weil height).
		 From $\displaystyle f^k\circ P=\prod_{i=1}^{d^k}(P-r_i)$,
		 and \cite[Proposition B.7.2]{HS}
		 we have:
		 \[\begin{array}{lcl}
		 	h(f^k\circ P)&\leq &\displaystyle\sum_{i=1}^{d^k}(h(P-r_i)+(\deg(P)+1)\log 2)\\
		 							&\leq &\displaystyle\sum_{i=1}^{d^k}(h(P)+h(r_i)+
		 							(\deg(P)+2)\log 2)\\
		 							&=&\hat{h}_{f}(0)+d^k(h(P)+(\deg(P)+2)\log 2 + O(1))
		 \end{array}\]
		 where the last equality follows from (\ref{hri}), so
		 the error term $O(1)$ only depends on $f$. Finally
		 part (c) follows from part (a), part (b) and \cite[Theorem 1.6.13]{BG}.
	\end{proof}


	Part (c) of Proposition \ref{prop:average} only gives us an upper bound
	(independent of $k$)
	for the average of the heights of roots of $G_k$ instead of 
	the height of every root. Now suppose
	there is a constant $c_6$ (independent of $k$) such that
	for every $k$, every root $\alpha$ of $G_k$
	that is not a root of $G_{k-1}$ has
	degree at least $c_6 d^k$ over $K$ then we are done. The reason
	is that there are at least $c_6d^k$
	conjugates of $\alpha$ and all contribute
	the same height to the average. It is usually the case 
	in the dynamics of disintegrated $f$ that every irreducible factor (in $K[X]$)
	of $G_k$ has a large degree unless it has already been a factor of 
	$G_{k-1}$. However, while such phenomena appear in practice, it seems to be a 
	very difficult problem to prove that such lower bounds on the degrees
	hold in general.
	We conclude this subsection by cooking up a specific instance
	in which all irreducible factors of $\displaystyle \frac{G_k}{G_{k-1}}$
	have large degrees thanks to the Eisenstein's criterion.
	\begin{proposition}\label{prop:example}
	Let $d\geq 2$ and let $p>d$ be a prime. 
	Let $f(X)=X^d+p$, and $C$ be the curve
	$y=x+p$ in $(\bP^1_{\Q})^2$. Then $C$
	is non-preperiodic and points in $\bigcup_{V}C(\bar{K})\cap V(\bar{K})$
	have bounded heights, where $V$ ranges over all
	preperiodic curves of $(\bP^1_{\Q})^2$. 
	\end{proposition}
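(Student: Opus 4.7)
For $f(X) = X^d + p$ with $p > d$ prime, an easy induction gives $f^N(0) \neq 0$ for all $N \geq 1$, and so if $L(X) = \zeta X$ commutes with $f^N$ then comparing constant terms forces $\zeta = 1$; a similar analysis excludes nontrivial $L(X) = \zeta X + c$, giving $M(f^\infty) = \{\id\}$. A short argument on iterate roots (any $\tilde f$ of degree $< d$ with $\tilde f^A = f$ would force all critical points of $\tilde f$ to lie in the forward orbit of $0$, eventually making $\tilde f$ a pure monomial and hence $p=0$) yields $\tilde f = f$. By Theorem \ref{inv_curves}, every periodic hypersurface in $(\bP^1_{\Qbar})^2$ has the form $\{x = \zeta\}$, $\{y = \zeta\}$, $\{y = f^m(x)\}$, or $\{x = f^m(y)\}$. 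The curve $C$, parameterized by $(t,t+p)$, is not of any such form, and a degree comparison between $(f^k(t), f^k(t+p))$ and $\{y = f^m(x)\}$ (as polynomials in $t$) shows $\varphi^k(C)$ is never periodic, so $C$ is non-preperiodic.

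\textbf{Reduction and factorization.} By the discussion preceding the Proposition, it suffices to bound the roots of $T_{k,m}(t) := f^k(t+p) - f^{k+m}(t)$ uniformly in $k,m \geq 0$ (the case $\{x = f^m(y)\}$ is symmetric). The identity $f(a) - f(b) = a^d - b^d$ gives the factorization
\[
T_{k,m}(t) = \bigl(t + p - f^m(t)\bigr)\prod_{i=0}^{k-1}\prod_{\substack{\zeta^d=1\\ \zeta\neq 1}}S_{i,m,\zeta}(t), \quad S_{i,m,\zeta}(t) := f^i(t+p) - \zeta f^{i+m}(t).
\]
The factor $t + p - f^m(t)$ cuts out $C \cap \{y = f^m(x)\}$, whose heights are uniformly bounded by Theorem \ref{dynBMZ_1}, so only the $S_{i,m,\zeta}$ remain.

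\textbf{Case $m \geq 1$.} The strict inequality $d^{i+m} > d^i$ is decisive. From $|f(z)| \sim |z|^d$ as $|z| \to \infty$, iteration gives $|f^{j}(z)|$ between $|z|^{d^{j}}/C_1^{d^j}$ and $C_1^{d^j}|z|^{d^j}$ for $|z|$ above an explicit threshold depending only on $p,d$. Equating $|f^i(\alpha+p)| = |f^{i+m}(\alpha)|$ then forces $|\alpha|^{d^{i+m}-d^i}$ to be bounded, hence $|\alpha|_\infty \leq C$ for a constant $C = C(p,d)$. Since $\sigma(S_{i,m,\zeta}) = S_{i,m,\sigma(\zeta)}$ for any complex embedding $\sigma$, this bound holds at every conjugate of $\alpha$; combined with integrality over $\Z[\zeta]$ (the leading coefficient $-\zeta$ is a unit), we get a uniform bound on $h(\alpha)$.

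\textbf{Case $m = 0$ and main obstacle.} Here the archimedean estimate degenerates; instead, Eisenstein is applied at a prime $\fp$ of $\Z[\zeta]$ above $p$. A quick induction using $f^{i+1}(X) = f^i(X)^d + p$ gives $f^i(X) \equiv X^{d^i} \pmod p$, and a mod $p^2$ refinement gives $f^i(0), f^i(p) \equiv p \pmod{p^2}$ for $i \geq 1$. These imply that $S_{i,0,\zeta}$ has leading coefficient $(1-\zeta)$ a $\fp$-unit, all non-leading coefficients in $\fp$, and constant term $\equiv p(1-\zeta) \pmod{p^2}$ of $\fp$-valuation exactly $1$. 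By Eisenstein over the Dedekind domain $\Z[\zeta]_{\fp}$, the polynomial $S_{i,0,\zeta}$ is irreducible over $\Q(\zeta)$ of degree $d^i$, whence every root $\alpha$ satisfies $[\Q(\alpha):\Q] \geq d^i$ and contributes at least $d^i$ Galois conjugates of equal height to the root set of $G_{i+1}$. Combining with Proposition \ref{prop:average}(c) (applied to $P(t)=t,Q(t)=t+p$), which yields $\sum_{G_{i+1}(\alpha)=0} h(\alpha) \leq c_5(d^{i+1}-1)$, the orbit bound forces $d^i h(\alpha) \leq c_5 d^{i+1}$, so $h(\alpha) \leq c_5 d$ uniformly in $i$. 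The delicate step of the whole argument is precisely this Eisenstein computation: tracking $f^i$ modulo $p^2$ closely enough to pin down the constant term of $S_{i,0,\zeta}$ is exactly what converts the average-height bound of Proposition \ref{prop:average}(c) into the pointwise bound that the Proposition requires.
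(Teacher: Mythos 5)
Your proof is correct and rests on the same crucial ingredient as the paper's: Eisenstein irreducibility of $S_{i,0,\zeta}=f^i(x+p)-\zeta f^i(x)$ over $\Q(\zeta)$ at a prime above $p$, converted into a pointwise height bound via the Galois-conjugate count and Proposition~\ref{prop:average}(c). You also fill in the mod-$p^2$ computation ($f^i(0),f^i(p)\equiv p\pmod{p^2}$) that the paper leaves implicit, which is a welcome addition. The two side routes you take are more elaborate than the paper's: for non-preperiodicity the paper simply checks $f^k(X)\neq\zeta f^k(X+p)$ for all $k$ and roots of unity $\zeta$ (the explicit determination of $\tilde f=f$ and $M(f^\infty)=\{\mathrm{id}\}$ is unnecessary, though correct), and for the degree-mismatch case $m\geq 1$ the paper's reduction at the start of the Examples subsection already handles it cleanly via canonical heights --- from $f^k(\alpha+p)=f^{k+m}(\alpha)$ one gets $\hat h(\alpha+p)=d^m\hat h(\alpha)$, hence $(d^m-1)\hat h(\alpha)=O_{f,p}(1)$ --- whereas your archimedean escape-rate bound, while valid, reproves this by hand and separately needs integrality to control the non-archimedean places. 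Both detours are sound, just longer than what is in the paper.
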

	\begin{proof}
		By Theorem \ref{inv_curves}
		and Proposition \ref{prop:all_g}, non-preperiodicity of $C$ is equivalent 
		to
		$f^k(X)\neq \zeta f^k(X+p)$ for every $k$, 
	  and
		this is obvious. Hence $C$ is non-preperiodic.

		We have:
		$$G_k=f^k(x+p)-f^k(x)=
		\displaystyle\prod_{\zeta^d=1}(f^{k-1}(x+p)-\zeta f^{k-1}(x)).$$
		By the reduction from part (b) of Question \ref{Q}
		to Question \ref{fix_g}, it suffices to show
		that for every periodic $W$, points in
		$\bigcup_{k\geq 0}C(\bar{K})\cap \varphi^{-k}(W)(\bar{K})$
		have bounded heights. By the argument in the beginning of
		this subsection, we may assume $W$ is the diagonal. Hence it
		suffices to show that roots of $G_k$ have bounded heights
		independent of $k$. By Eisenstein's criterion, $f^{k-1}(x+p)-\zeta f^{k-1}(x)$ is irreducible
		(over $\Q(\zeta)$) when $\zeta\neq 1$. Then by Proposition \ref{prop:average} and the discussion after it, we get the
		desired conclusion.	 
	\end{proof}
	
	\subsection{The Bounded Average Height Theorem}\label{sub:average}
	\subsubsection{The Statements}
	In this subsection, we prove that the average bounded height result
	in Proposition \ref{prop:average} holds for an arbitrary
	polarized dynamical system (see Theorem \ref{thm:H_cap_V}).
	We have the following:
	\begin{theorem}\label{thm:C_cap_V}
		Let $f$, $n$ and $\varphi$ be as in Theorem \ref{dynBMZ_1}. Let $C$ be an
		irreducible curve in $(\bP^1_{\bar{K}})^n$ such that its projection 
		to each factor $\bP^1_{\bar{K}}$ is onto. There exists a constant $c_7$
		such that for every irreducible preperiodic hypersurface $V$
		in $(\bP^1_{\bar{K}})^n$ that does not contain $C$, the average height
		of points in 
		$C(\bar{K})\cap V(\bar{K})$ is bounded above by $c_7$. More precisely,
		define:
		$$C_{\bar{K}}.V=m_1P_1+\ldots+m_lP_l$$
		where $C(\bar{K})\cap V(\bar{K})=\{P_1,\ldots,P_l\}$
		and $m_1,\ldots,m_l$ are the corresponding intersection multiplicities. Then
		we have:
		\begin{equation}\label{ineq:C_cap_V}
	 		\displaystyle\frac{\sum_{i=1}^{l}m_ih(P_i)}
	 		{\sum_{i=1}^{l}m_i}\leq c_7.
	 		\end{equation}
	\end{theorem}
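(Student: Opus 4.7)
The plan is to first reduce to the two-factor case $n=2$ via the Medvedev--Scanlon classification, then split the preperiodic curves into easy families and a single ``generic'' family treatable by canonical-height identities, and finally invoke arithmetic intersection theory in the remaining exceptional sub-family. By Theorem \ref{inv_curves} and Proposition \ref{prop:all_g}, every irreducible preperiodic hypersurface $V\subset (\bP^1_{\bar{K}})^n$ is defined by an equation in only two of the coordinates, so $V=\pi_{ij}^{-1}(V_{ij})$ for some preperiodic curve $V_{ij}$ in the factor $(\bP^1_{\bar{K}})^2$ picked out by $(x_i,x_j)$. Because $C$ surjects onto each coordinate $\bP^1$, each projection $\pi_{ij}$ restricted to $C$ is finite of degree bounded independently of the pair, intersection multiplicities transfer with this bounded factor, and $h(P)$ is comparable to $h(\pi_{ij}(P))$ up to an $O(1)$ depending only on $C$. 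So it suffices to treat $n=2$, with the same $c_7$ inflated by a factor depending only on $C$.

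Assume now $n=2$, and let $F(x,y)$ of bidegree $(M,N)$ define $C$. By Theorem \ref{inv_curves} every irreducible preperiodic curve is of one of two shapes: (i) $x=\zeta$ or $y=\zeta$ with $\zeta$ an $f$-preperiodic point, or (ii) $V=\varphi^{-k}(W)$ for some $k\ge 0$ and some periodic $W:y=g(x)$ (or $x=g(y)$) with $g$ commuting with an iterate of $f$. In case (i) preperiodicity forces $\hat{h}(\zeta)=0$ and hence $h(\zeta)=O(1)$ uniformly; the $y$-coordinates of $C\cap V$ are then the roots of $F(\zeta,y)\in\bar{K}[y]$, a polynomial of degree $N$ and $O(1)$ height, so the standard bound on the sum of heights of roots of a polynomial in terms of its Mahler height (cf. \cite[Theorem 1.6.13]{BG}) gives the average height bound directly.

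For case (ii), fix $(\alpha,\beta)\in C\cap V$ with $V=\varphi^{-k}(W)$, $W:y=g(x)$. Applying Lemma \ref{same_cano} to the relation $f^k(\beta)=g(f^k(\alpha))$ gives the key identity $\hat{h}(\beta)=\deg(g)\,\hat{h}(\alpha)$, and substituting this into the geometric inequality of Lemma \ref{ineq_C} yields
\begin{equation*}
\bigl|M-N\deg(g)\bigr|\,\hat{h}(\alpha)\le c_1\sqrt{(\deg(g)+1)\hat{h}(\alpha)+1}+c_2,
\end{equation*}
which bounds each $\hat{h}(\alpha)$, and hence each $h(\alpha,\beta)$, uniformly whenever $N\deg(g)\ne M$. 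By Proposition \ref{prop:all_g} only finitely many $g$'s satisfy $\deg(g)\le M/N$, so the inequality above settles all preperiodic $V$ except for the finitely many families $V=\varphi^{-k}(W_0)$ where $W_0:y=g_0(x)$ is one of the exceptional periodic curves with $N\deg(g_0)=M$, and $k\ge 0$ is arbitrary.

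The main obstacle is precisely this exceptional sub-family, in which individual heights may genuinely grow with $k$ and only the \emph{average} can be controlled; this is the heart of the theorem. To overcome it, I would interpret $\sum_i m_i h(P_i)$ as the height of the zero-cycle $C\cdot V$ with respect to the polarization $\scrO(1,1)$, and bound it by an arithmetic B\'ezout-type inequality of the form $h(C\cdot V)\le A\,h(C)\deg(V)+B\,h(V)\deg(C)+O(\deg(C)\deg(V))$. Combining this with the vanishing of the canonical height $\hat{h}(V)=0$ on preperiodic divisors, together with the comparison $|h(V)-\hat{h}(V)|=O(\deg(V))$ between the Weil height and the canonical height on divisors, yields $\sum_i m_i h(P_i)=O(\deg(C\cdot V))=O\bigl(\sum_i m_i\bigr)$, which is exactly the desired average-height bound. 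This is the step requiring genuine arithmetic intersection theory on the polarized dynamical system $((\bP^1)^n,\varphi,\scrO(1,\ldots,1))$ in the sense of Bost--Gillet--Soul\'e \cite{BGS}, Zhang \cite{Zhang95}, and Kawaguchi \cite{Kawa06}, and it is precisely what the more general Theorem \ref{thm:H_cap_V} promised in the paper delivers; from that, Theorem \ref{thm:C_cap_V} follows uniformly in $V$ without having to separate the exceptional cases by hand.
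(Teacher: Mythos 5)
Your proof is correct and takes essentially the same route as the paper: reduce to $n=2$ via the Medvedev--Scanlon classification and then invoke arithmetic B\'ezout plus vanishing of the canonical height on preperiodic cycles, which is exactly the content of Theorem \ref{thm:H_cap_V} applied with $H=C$. The preliminary case analysis you include (the $x_i=\zeta$ lines and the $N\deg(g)\neq M$ family handled elementarily) mirrors the paper's own discussion preceding the theorem, but as you yourself note at the end, it is superfluous once Theorem \ref{thm:H_cap_V} is in hand.
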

	
	As in the proof of Theorem \ref{dynBMZ_1}, we can simply reduce to the case
	$n=2$. Then Theorem \ref{thm:C_cap_V} is a special case of the following:
	\begin{theorem}\label{thm:H_cap_V}
		Let $X$ be a projective scheme over $K$ such that
		$X_{\bar{K}}$ is normal and irreducible, $H$
		a closed subscheme of $X$ such that $H_{\bar{K}}$ is an irreducible
		hypersurface. 
		Assume the line bundle $L$ associated to $H$ is
		very ample.
		Let $d\geq 2$, and let $\varphi$ be a $K$-morphism from $X$ to itself such 
		that 
		$\varphi^{*}L\cong L^d$. Fix a height $\tilde{h}$
		on $X(\bar{K})$ corresponding to a very ample line bundle. 
		There exists $c_8$ such that 
		for every irreducible $\varphi$-preperiodic
		curve $V$ of $X_{\bar{K}}$ not contained in $H_{\bar{K}}$, the average height
		of points in $H(\bar{K})\cap 
		V(\bar{K})$ is bounded above by $c_8$. More precisely, write: 
		$$H_{\bar{K}}.V_{\bar{K}}=m_1P_1+\ldots+m_2P_2$$
		where $H(\bar{K})\cap V(\bar{K})=\{P_1,\ldots,P_l\}$
		and $m_1,\ldots,m_l$ are the corresponding multiplicities. Then we have:
		\begin{equation}\label{ineq:H_cap_V}
	 		\displaystyle\frac{\sum_{i=1}^{l}m_i\tilde{h}(P_i)}
	 		{\sum_{i=1}^{l}m_i}\leq c_8.
	 		\end{equation}
	\end{theorem}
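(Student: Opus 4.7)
My plan is to deduce the theorem from the theory of canonical adelic metrized line bundles for polarized dynamical systems, using Call-Silverman's canonical height and Zhang's height of subvarieties. Since $L$ is very ample, the Weil height $h_L$ attached to the embedding $X\hookrightarrow \bP^N$ by sections of $L$ is comparable to $\tilde h$ up to bounded error, so it suffices to bound the $h_L$-average on $V\cap H$. By Call-Silverman, the canonical height
\[
\hat h_L(P) := \lim_{n\to\infty} d^{-n}\, h_L(\varphi^n(P))
\]
exists on $X(\bar{K})$, satisfies $\hat h_L\circ\varphi = d\,\hat h_L$, and differs from $h_L$ by $O(1)$; hence the problem reduces to bounding $\sum m_i\, \hat h_L(P_i)/\deg_L(V)$ by an absolute constant.

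Tate's limit procedure applied at the level of metrics produces a canonical continuous semipositive adelic metric on $L$, giving an adelic metrized line bundle $\bar{L}$ with $\varphi^*\bar{L} = \bar{L}^{\otimes d}$ isometrically and whose point-height coincides with $\hat h_L$. Following Zhang \cite{Zhang95}, one defines for every irreducible subvariety $Y\subseteq X$ of dimension $k$
\[
\hat h_L(Y) := \frac{\hat c_1(\bar{L})^{k+1}\cdot \overline{Y}}{(k+1)\deg_L(Y)}
\]
via arithmetic intersection; by Zhang's theorem on preperiodic subvarieties, $\hat h_L(V) = 0$. Let $s_H\in\Gamma(X,L)$ be a section cutting out $H$ and $g_v = -\log\|s_H\|_{\mathrm{can},v}$ the associated Green function at each place $v$. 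Expanding the arithmetic self-intersection $\hat c_1(\bar{L})^2\cdot V$ by writing $\hat c_1(\bar{L}) = [\,\widehat{\mathrm{div}}(s_H)\,]$ and applying the projection formula in the arithmetic Chow group of an arithmetic model of $X$ yields the identity
\[
2\,\hat h_L(V)\,\deg_L(V) = \sum_{P\in V\cap H} m_P\, \hat h_L(P) + \sum_v n_v \int_{V^{\mathrm{an}}_v} g_v\, c_1(\bar{L})_v,
\]
where the sum is over all places of $K$ with their standard local weights. Since $\hat h_L(V)=0$, the first term on the right equals minus the second.

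The main obstacle is to bound the integrals $\bigl|\int_{V^{\mathrm{an}}_v} g_v\, c_1(\bar{L})_v\bigr|$ by $C_v\deg_L(V)$ uniformly in $V$, with $C_v=0$ for all but finitely many $v$. At each place, $c_1(\bar{L})_v$ is a positive measure of total mass $\deg_L(V)$ on $V^{\mathrm{an}}_v$; the Green function $g_v$ is continuous on $X^{\mathrm{an}}_v\setminus H^{\mathrm{an}}_v$ with logarithmic singularities along $H$, and is bounded above on $X^{\mathrm{an}}_v$ by a constant depending only on $(X,\bar{L},H)$. The required uniform bound is precisely the integrability of Green functions against continuous semipositive forms that underlies the construction of the arithmetic intersection product for continuous semipositive adelic metrized line bundles (cf.\ \cite{BGS}, \cite{Kawa06}); this is the essential analytic input. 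Summing over places and dividing by $\sum m_P = \deg_L(V)$ gives the desired uniform upper bound on the average height, completing the proof.
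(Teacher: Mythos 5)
Your strategy—canonical adelic metric on $L$, Zhang's canonical height of subvarieties $\hat h_L(V)=0$ for preperiodic $V$, and expansion of $\hat c_1(\bar L)^2\cdot V$ via the star product—is a genuinely different route from the paper's. The paper embeds $X$ by $L$ into $\bP^N$, normalizes the cycle $V'$ to have degree one, applies the arithmetic B\'ezout theorem of \cite{BGS} to get an upper bound for $h_{BGS}(H'.\pi_*V')$ in terms of $h_{BGS}(H')$ and $h_{BGS}(V')$, translates to Faltings heights, and only then invokes Zhang--Kawaguchi to bound $h_{Fal}(V')/[K:\Q]$ independently of $V$. Your approach is, in principle, more direct; it is also more exposed to sign and uniformity pitfalls, and you fell into one.

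The error is in the crucial analytic step. The Green function $g_v=-\log\|s_H\|_{\mathrm{can},v}$ is \emph{not} bounded above on $X^{\mathrm{an}}_v$: it tends to $+\infty$ along $H^{\mathrm{an}}_v$, where $s_H$ vanishes. What is true is that $g_v$ is bounded \emph{below} (since $\|s_H\|_v$ is bounded above by compactness, and $\leq 1$ at almost every nonarchimedean place once one fixes a model on which $s_H$ is integral). This is not a cosmetic slip. You ask for $\bigl|\int_{V^{\mathrm{an}}_v} g_v\, c_1(\bar L)_v\bigr|\leq C_v\deg_L(V)$ uniformly in $V$; no such two-sided bound holds, since preperiodic curves $V$ can pass arbitrarily close to $H$ and then $\int g_v\,c_1(\bar L)_v$ can be arbitrarily large and positive. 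What one actually needs, after using $\hat h_L(V)=0$ to write $\sum_i m_i\hat h_L(P_i) = -\sum_v n_v\int_{V^{\mathrm{an}}_v} g_v\,c_1(\bar L)_v$, is only a one-sided \emph{lower} bound $g_v\geq -B_v$ with $B_v=0$ for almost all $v$: since $c_1(\bar L|_V)_v$ is a positive measure of total mass $\deg_L(V)$, this gives $\int g_v\,c_1(\bar L)_v\geq -B_v\deg_L(V)$ and hence the desired upper bound on the average $\hat h_L$. Finally, ``integrability of Green functions against semipositive forms'' is what makes each local term finite; it does not, by itself, supply the $V$-independent constant you need. After these corrections the adelic route does go through, but as written, the key inequality points the wrong way; the paper's B\'ezout-based argument sidesteps these issues entirely.
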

		
		We now focus on proving Theorem \ref{thm:H_cap_V}. Note the
		amusing change that we now concentrate on the intersection of
		a fixed \textit{hypersurface} with an arbitrary preperiodic \textit{curve}.
		We regard $X$ as a closed subvariety of $\bP^N_K$ by choosing
		a closed embedding associated to $H$. Let $h$ denote
		the Weil height on $\bP^N(\bar{K})$ as well as its restriction
		on $X(\bar{K})$. We may prove Theorem \ref{thm:H_cap_V} with
		$\tilde{h}$ replaced by $h$ since there exists $M$ such that
		$\tilde{h}< Mh + O(1)$ where the error term $O(1)$ is uniform on 
		$X(\bar{K})$. The main ingredients of the proof of Theorem \ref{thm:H_cap_V}
		are the arithmetic B\'ezout's theorem by Bost-Gillet-Soul\'e \cite{BGS},
		and the construction of the canonical height for subvarieties by Zhang
		\cite{Zhang95} and Kawaguchi \cite{Kawa06}.
	
	\subsubsection{Proof of Theorem \ref{thm:H_cap_V}} Let $V$ be a $\varphi$-preperiodic
	curve in $X_{\bar{K}}$. Let $F$ be a finite extension of $K$ such that $V$ is defined over
	$F$. Write $\scrO=\scrO_K$ to denote the ring of integers of $K$,
	and $\pi$ to denote the base change morphism $\bP^N_{F}\longrightarrow \bP^N_K$. 
	As in \cite[p.946--947]{BGS}, we 
	let $\bar{E}$ denote the
	trivial hermitian vector bundle of rank $N+1$ on $\Spec(\scrO)$ and
	equip the canonical line bundle $\scrM:=\scrO(1)$ of $\bP^N_{\scrO}$
	with the quotient metric $m$. We denote $\bar{\scrM}=(\scrM,m)$. The 
	pull-back 
	of $\scrM$ to $X$ is isomorphic to the line bundle $L$.

	For $0\leq p\leq N+1$, for any cycle $\scrZ\in Z_{p}(\bP^N_{\scrO})$
	of dimension $p$, following
	\cite[p. 946]{BGS}, we define the
	Faltings' height of $\scrZ$ to be the real number:
	\begin{equation}\label{F_height_O}
		h_{Fal}(\scrZ)=\widehat{\deg}\left(\hat{c}_1(\bar{\scrM})^{p}\mid \scrZ\right)
	\end{equation} 	
	where $\hat{c}_1(\bar{\scrM})$ is
	the first arithmetic Chern class of $\bar{\scrM}$,
	and $\widehat{\deg}$ is the arithmetic degree map as defined in 
	\cite{BGS}. 
	
	For $0\leq p\leq N$, for every cycle $Z\in Z_p(\bP^N_{K})$,
	let $\bar{Z}$ denote the closure of
	$Z$ in $\bP^N_{\scrO}$. We define the Faltings' height 
	of $Z$ to be:
	\begin{equation}\label{F_height}
		h_{Fal}(Z):=h_{Fal}(\bar{Z}).
	\end{equation}
	If $Z\in Z_p(\bP^N_{\bar{K}})$, we let $K'$ be a finite extension of $K$
	so that $Z$ is defined over $K'$, i.e. $Z$ is the pull-back of
	a cycle $Z'\in Z_p(\bP^N_{K'})$. Let $\rho$ denote
	the base change morphism from $\bP^N_{K'}$
	to $\bP^N_K$. We then define the Faltings' height of $Z$
	to be:
	\begin{equation}\label{F_height_Kbar}
	h_{Fal}(Z):=\frac{1}{[K':K]}h_{Fal}(\rho_*Z')
	\end{equation}
	This is independent of the choice of $K'$.

	 	For $0\leq p\leq N+1$, for every 
	 	cycle $\scrZ\in Z_{p}(\bP^N_{\scrO})$, we have the following 
	Bost-Gillet-Soul\'e 
	projective
	height of $\scrZ$ \cite[p. 964]{BGS}:
	\begin{equation}\label{BGS_height_O}
		h_{BGS}(\scrZ)=\widehat{\deg}(\hat{c}_p(\bar{Q})\mid \scrZ)
	\end{equation}
	where $\bar{Q}$ is the hermitian vector bundle defined as in \cite[p. 964]{BGS}, and $\hat{c}_p$
	is the $p^\text{th}$ arithmetic Chern class of $\bar{Q}$.
	 	
	 	For $0\leq p\leq N$, for every cycle $Z\in Z_p(\bP^N_{K})$,
	 	we define the Bost-Gillet-Soul\'e height of $Z$ to be:
	 	\begin{equation}\label{BGS_height}
		h_{BGS}(Z):= h_{BGS}(\bar{Z}).
	  \end{equation}
	 	If $Z\in Z_p(\bP^N_{\bar{K}})$, we let $K'$ be a finite extension of $K$
	 	over which $Z$ is defined by $Z'\in Z_p(\bP^N_{K'})$. Let $\rho$ be 
	 	the base change morphism as above, we define:
	 	\begin{equation}\label{h_BGS_Kbar}
	 	h_{BGS}(Z):=\frac{1}{[K':K]}h_{BGS}(\rho_*Z').
	 	\end{equation}
	 This is independent of the choice of $K'$.

	 	Proposition 4.1.2 in \cite{BGS} in which the authors compare
	the Faltings' height and the Bost-Gillet-Soul\'e projective
	height yields the following:
	\begin{proposition}\label{prop:h_F_h_BGS}
		For $0\leq p\leq N$, for any cycle $Z\in Z_{p}(\bP^N_{K})$, 
		define
		$\deg_K(Z)=\deg_K(\bar{Z}):=\deg_{\scrO(1)_K}(Z)$ as in \cite[p. 964]{BGS}. We have:
		\begin{equation}\label{h_F_h_BGS} 
						h_{BGS}(Z)=h_{Fal}(Z)-[K:\Q]\sigma_{p}\deg_K(Z),
		\end{equation}
		where $\sigma_{p}$ is the Stoll number (see, for example, \cite[p. 922]{BGS}).
	\end{proposition}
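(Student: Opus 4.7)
The plan is to reduce the statement to \cite[Proposition 4.1.2]{BGS} applied directly at the level of integral cycles in $\bP^N_{\scrO}$, and then to propagate the identity through the chain of definitions (\ref{F_height_O})--(\ref{h_BGS_Kbar}) from $\scrO$-cycles up to $\bar{K}$-cycles. No further geometry or dynamics is needed beyond what is already encoded in the definitions; everything reduces to a bookkeeping check compatible with our normalizations of $h_{Fal}$, $h_{BGS}$, and $\deg_K$.

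First I would invoke \cite[Proposition 4.1.2]{BGS}, which is precisely the comparison identity
\begin{equation*}
h_{BGS}(\scrZ)=h_{Fal}(\scrZ)-[K:\Q]\sigma_{p}\deg_K(\scrZ)
\end{equation*}
for arbitrary $\scrZ\in Z_{p}(\bP^N_{\scrO})$. The point of that result is that both $h_{Fal}$ and $h_{BGS}$ can be expressed as the arithmetic degree of a single characteristic class on $\scrZ$, and the difference between $\hat{c}_1(\bar{\scrM})^{p}$ and $\hat{c}_p(\bar{Q})$, computed in the arithmetic Chow ring of $\bP^N_{\scrO}$, is the multiple $\sigma_p\cdot[\omega_{\mathrm{FS}}]^{p-1}$ of the Fubini--Study form, producing the $[K:\Q]\sigma_p\deg_K(\scrZ)$ archimedean correction.

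Next, for $Z\in Z_p(\bP^N_{K})$, I would simply apply the preceding identity to the Zariski closure $\bar{Z}\in Z_p(\bP^N_{\scrO})$. By definitions (\ref{F_height}) and (\ref{BGS_height}), we have $h_{Fal}(Z)=h_{Fal}(\bar{Z})$ and $h_{BGS}(Z)=h_{BGS}(\bar{Z})$, and by the convention $\deg_K(Z)=\deg_K(\bar{Z})$ built into the statement, so the identity transfers verbatim.

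Finally, for $Z\in Z_p(\bP^N_{\bar{K}})$, pick a finite extension $K'/K$ over which $Z$ descends to a cycle $Z'\in Z_p(\bP^N_{K'})$, and let $\rho\colon \bP^N_{K'}\to \bP^N_K$ be the base-change morphism. Apply the formula already established to $\rho_{*}Z'\in Z_p(\bP^N_K)$ and divide through by $[K':K]$; by (\ref{F_height_Kbar}) and (\ref{h_BGS_Kbar}) the first two terms become $h_{Fal}(Z)$ and $h_{BGS}(Z)$, while the error term becomes $[K:\Q]\sigma_p\cdot\tfrac{1}{[K':K]}\deg_K(\rho_*Z')$. The only step requiring care is verifying that this last quantity equals $[K:\Q]\sigma_p\deg_K(Z)$ under our conventions, i.e.\ that Faltings' height, Bost--Gillet--Soul\'e height, and the projective degree all scale by the same factor $[K':K]$ under $\rho_*$; this is a routine consequence of the projection formula, and independence of the choice of $K'$ then follows automatically. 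This compatibility, rather than any deep input, is the only point one has to check, and it is the sole (mild) obstacle in carrying out the proof.
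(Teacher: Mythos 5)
Your proposal is correct and takes essentially the same route as the paper, which simply cites \cite[Proposition 4.1.2]{BGS} and leaves the transfer from cycles over $\scrO$ to cycles over $K$ implicit in the definitions (\ref{F_height}) and (\ref{BGS_height}). The definitional unwinding you supply (taking closures and, in the $\bar{K}$ case, pushing forward under $\rho$ and dividing by $[K':K]$) is the routine bookkeeping the paper omits; note only that your parenthetical description of the archimedean correction term as $\sigma_p\cdot[\omega_{\mathrm{FS}}]^{p-1}$ is heuristic shorthand rather than the precise form in \cite{BGS}, but since you are quoting that proposition rather than reproving it, this does not affect the argument.
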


	The arithmetic B\'ezout theorem \cite[Theorem 4.2.3]{BGS}
	implies the following:
		\begin{proposition}\label{prop:BezoutXY}
			Let $Y\in Z_{N-1}(\bP^N_{K})$
			and $Z\in Z_1(\bP^N_{K})$
			be two cycles intersecting properly in $\bP^N_{K}$.
			We have:
			\begin{align}\label{BezoutXY}
				\begin{split}
				h_{BGS}(Y.Z)\leq {}& \deg_K(Z)h_{BGS}(Y)+h_{BGS}(Z)\deg_K(Y)\\
				{} &+[K:\Q]a(N,N,2)\deg_K(Y)\deg_K(Z)
				\end{split}
			\end{align}
			where $a(N,N,2)$ is the constant defined in \cite[p. 971]{BGS}.
		\end{proposition}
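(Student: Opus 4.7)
The plan is to invoke the arithmetic B\'ezout theorem of Bost-Gillet-Soul\'e \cite[Theorem 4.2.3]{BGS} essentially verbatim. That theorem provides, for two effective cycles $Y, Z$ in $\bP^N_K$ (or more generally, cycles after a suitable reduction to the effective case) that intersect properly, an inequality of the form
\begin{equation*}
h_{BGS}(\bar Y . \bar Z) \leq \deg_K(Z)\, h_{BGS}(\bar Y) + \deg_K(Y)\, h_{BGS}(\bar Z) + [K:\Q]\, a(N, \dim Y + \dim Z, 2)\, \deg_K(Y)\deg_K(Z),
\end{equation*}
where $a(N, \cdot, \cdot)$ is the explicit combinatorial constant defined in \cite[p.~971]{BGS}. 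In our situation $\dim Y = N-1$ and $\dim Z = 1$, so $\dim Y + \dim Z = N$, and the constant appearing is $a(N, N, 2)$.

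First I would reduce to the case where $Y$ and $Z$ are irreducible and effective by writing each cycle as a $\Z$-linear combination of its irreducible components and using bilinearity of the intersection pairing together with the definitions in \eqref{BGS_height_O} and \eqref{BGS_height}, which extend $\Z$-linearly in the cycle argument. Since the proper intersection hypothesis is preserved componentwise (any two components still intersect properly), and since the degrees add, it suffices to treat the case of irreducible prime cycles. The passage from cycles over $K$ to their Zariski closures $\bar Y, \bar Z$ in $\bP^N_{\scrO}$ is exactly how $h_{BGS}$ is defined in \eqref{BGS_height}, and proper intersection in the generic fiber corresponds to the relevant dimension count for the closures.

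Next I would identify the quantity $h_{BGS}(Y.Z)$ on the left-hand side of the proposition with $h_{BGS}(\bar Y . \bar Z)$. Since $Y$ and $Z$ meet properly in $\bP^N_K$ and $\dim Y + \dim Z = N$, the intersection $Y.Z$ is a $0$-cycle on $\bP^N_K$, and by flatness-and-closure considerations for the effective components (or by explicitly comparing with the definition \eqref{BGS_height_O} applied to the closure of the $0$-cycle $Y.Z$), the closure of $Y.Z$ equals the generic part of $\bar Y . \bar Z$, and any vertical contribution to $\bar Y . \bar Z$ only improves the inequality because all arithmetic Chern class evaluations in \eqref{BGS_height_O} are nonnegative on effective vertical cycles.

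The only step with any content is checking that the constant appearing in \cite[Theorem 4.2.3]{BGS} specializes to $a(N,N,2)$ here, which is a matter of matching the numerics: the ``$2$'' is the number of cycles being intersected, the first ``$N$'' is the ambient projective dimension, and the middle ``$N$'' is $\dim Y + \dim Z$. No real obstacle is expected; the proposition is a direct corollary, and the proof would simply cite the theorem after the effective-reduction and closure bookkeeping above.
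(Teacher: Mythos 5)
Your proposal is correct and follows essentially the same route as the paper: both reduce the statement to the arithmetic B\'ezout theorem \cite[Theorem 4.2.3]{BGS} applied to the closures $\bar Y, \bar Z$ in $\bP^N_{\scrO}$, and both use the cycle comparison $\overline{Y.Z}\leq \bar Y.\bar Z$ together with nonnegativity of $h_{BGS}$ on effective cycles to pass from $h_{BGS}(\overline{Y.Z})$ to $h_{BGS}(\bar Y.\bar Z)$. The extra bookkeeping you add (reduction to irreducible components and matching the indices in $a(N,N,2)$) is consistent with, but not needed beyond, the paper's two-line argument.
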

		
		To prove Proposition \ref{prop:BezoutXY},
		note the following:
		$$h_{BGS}(Y.Z):=h_{BGS}(\overline{Y.Z})\leq h_{BGS}(\bar{Y}.\bar{Z})$$
		because the closure $\overline{Y.Z}$
		of $Y.Z$
		is contained in $\bar{Y}.\bar{Z}$. Then we bound $h_{BGS}(\bar{Y}.\bar{Z})$
		from above by the right hand side of (\ref{BezoutXY})
		thanks to \cite[Theorem 4.2.3]{BGS}.

		Let $H'$ denote the hyperplane of $\bP^N_K$ whose restriction to $X$ is $H$.
		Define $V'=\displaystyle\frac{V}{\sum_{i=1}^l m_i}$
		as a pure cycle (with rational coefficients) 
		in $\bP^N_F$. By the classical B\'ezout's
		theorem, we have:
		\begin{align}\label{1stdeg}
		\begin{split}	\deg_K(\pi_{*}V')={}&[F:K]\deg_F(V')=\frac{[F:K]\deg_F(V)}{\deg_{\bar{K}}(H'_{\bar{K}}.V_{\bar{K}})}\\
			={}&\frac{[F:K]\deg_F(V)}{\deg_F(H'_F.V)}=\frac{[F:K]}{\deg_F(H'_F)}=[F:K].
		\end{split}
		\end{align}
		\begin{equation}\label{2nddeg}
			\deg_K(H'.\pi_*V')=[F:K].
		\end{equation}

		Apply Proposition \ref{prop:BezoutXY} for the cycles $H'$
		and $\pi_*V'$ together with (\ref{1stdeg})
		and (\ref{2nddeg}), we have:
		\begin{equation}\label{ineq:1st_app}
			h_{BGS}(H'.\pi_*V')\leq [F:K]h_{BGS}(H')+h_{BGS}(\pi_*V')+[F:\Q]a(N,N,2).
		\end{equation}
		
		By using Proposition \ref{prop:h_F_h_BGS}, (\ref{1stdeg}),
		(\ref{2nddeg}) and the fact that $\sigma_0=0$, 
		we can replace $h_{BGS}$
		by $h_{Fal}$ in (\ref{ineq:1st_app}) to get:
		\begin{align}\label{ineq:h_F_only}
			\begin{split}
			h_{Fal}(H'.\pi_*V')\leq{}& [F:K](h_{Fal}(H')-[K:\Q]\sigma_{N-1})
			+h_{Fal}(\pi_*V')\\
			{}&-[F:\Q]\sigma_{1}
			+[F:\Q]a(N,N,2).
			\end{split}
		\end{align}
		Therefore
		\begin{equation}\label{ineq:c_10}
			h_{Fal}(H'.\pi_*V')\leq [F:K]h_{Fal}(H')+h_{Fal}(\pi_*V')+[F:\Q]c_{10},
		\end{equation} 	
		where $c_{10}=a(N,N,2)-\sigma_1-\sigma_{N-1}$ is an explicit
		constant depending only on $N$.
		
		Dividing both sides of (\ref{ineq:c_10}) by $[F:\Q]$, we have:
		\begin{equation}\label{dividing_F:Q}
		\frac{h_{Fal}(H'.\pi_*V')}{[F:\Q]}\leq \frac{h_{Fal}(H')}{[K:\Q]}+
		\frac{h_{Fal}(V')}{[K:\Q]}+c_{10}
		\end{equation} 
		
		From $H'_{\bar{K}}.V'_{\bar{K}}=\displaystyle\frac{\sum_{i=1}^l 
		m_iP_i}{\sum_{i=1}^l m_i}$, we have:
		\begin{equation}\label{eq:simplify}
		\frac{h_{Fal}(H'.\pi_*V')}{[F:K]}=\frac{\sum_{i=1}^l m_ih_{Fal}(P_i)}{\sum_{i=1}^l m_i}
		\end{equation}
		 
		Recall that $h$ denote the absolute Weil height on $\bP^N(\bar{K})$
		(see the paragraph right after Theorem \ref{thm:H_cap_V}). Note that
		$h_{Fal}$ on $\bP^N(\bar{K})$ is also a choice of a Weil height (relative 
		over $K$) corresponding the canonical line bundle $\scrO(1)$. Hence there 
		exists a constant $c_{11}$ such that:
		\begin{equation}\label{ineq:c_11}
		|h(P)-\frac{h_{Fal}(P)}{[K:\Q]}|\leq c_{11}\ \forall P\in \bP^N(\bar{K}).
		\end{equation}
		
		From (\ref{dividing_F:Q}), (\ref{eq:simplify}) and (\ref{ineq:c_11}),
		we have:
		$$\frac{\sum_{i=1}^l m_ih(P_i)}{\sum_{i-1}^l m_i}\leq \frac{h_{Fal}(V')}{[K:\Q]}+\frac{h_{Fal}(H')}{[K:\Q]}+c_{10}+c_{11}.$$
		To finish the proof of Theorem \ref{thm:H_cap_V}, it remains to show that 
		$\displaystyle\frac{h_{Fal}(V')}{[K:\Q]}$
		is bounded independently of $V$. We will use the canonical height 
		$h_{\varphi,L}$ constructed by Zhang \cite{Zhang95} and generalized by
		Kawaguchi \cite{Kawa06}. We have the following special case 
		of their construction:
		
		\begin{proposition}\label{prop:ZhKa}
			There is a height function $h_{\varphi,L}$ on
			effective cycles in $Z_1(X_{\bar{K}})$ satisfying the following
			properties:
			\begin{itemize}
				\item [(a)] If $Z$ is a preperiodic curve in $X_{\bar{K}}$ then
				$h_{\varphi,L}(Z)=0$.
				\item [(b)] There exists a constant $c_{12}$ such that for every
				curve $Z$ in $X_{\bar{K}}$, we have:
				$$|h_{\varphi,L}(Z)-\frac{h_{Fal}(Z)}{2[K:\Q]\deg_{\bar{K}}(Z)}|< c_{12}$$
			\end{itemize}
		\end{proposition}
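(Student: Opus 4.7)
The plan is to construct $h_{\varphi,L}$ by Tate's limiting procedure applied to the Faltings' height, exploiting the isomorphism $\varphi^{*}L\cong L^{d}$. Since $\varphi_{*}$ multiplies $L$-degrees of $1$-cycles by $d$, while the Faltings' height of a $1$-cycle involves $\hat{c}_{1}(\bar{L})^{2}$, one expects $h_{Fal}(\varphi_{*}W)\approx d^{2}\,h_{Fal}(W)$ up to an error controlled by $\deg_{\bar{K}}(W)$. This motivates the definition
\begin{equation*}
h_{\varphi,L}(Z):=\lim_{n\to\infty}\frac{h_{Fal}(\varphi^{n}_{*}Z)}{2\,d^{2n}\,[K:\Q]\,\deg_{\bar{K}}(Z)}.
\end{equation*}

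The first step is to establish the quantitative estimate
\begin{equation*}
\bigl|h_{Fal}(\varphi_{*}W)-d^{2}\,h_{Fal}(W)\bigr|\leq C_{1}\,\deg_{\bar{K}}(W)
\end{equation*}
with $C_{1}$ independent of $W$. One obtains this by equipping $L$ with a continuous hermitian metric coming from a projective model, performing Tate iteration to produce Zhang's admissible adelic metric $\|\cdot\|_{\infty}$ satisfying $\varphi^{*}\|\cdot\|_{\infty}=\|\cdot\|_{\infty}^{\otimes d}$, and observing that the logarithm of its ratio with the original metric is uniformly bounded at every place. Faltings' heights computed with the two metrics then differ by $O(\deg_{\bar{K}}(W))$ via the standard Arakelov-theoretic comparison of heights attached to different metrics on the same line bundle.

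Given this estimate, and using $\deg_{\bar{K}}(\varphi^{n}_{*}Z)=d^{n}\deg_{\bar{K}}(Z)$, the successive differences of $a_{n}:=h_{Fal}(\varphi^{n}_{*}Z)/d^{2n}$ are bounded by $C_{1}\deg_{\bar{K}}(Z)/d^{n+2}$, a convergent geometric series; this proves existence of the limit and, after dividing by $2[K:\Q]\deg_{\bar{K}}(Z)$, yields part (b) with $c_{12}=C_{1}/(2[K:\Q]d(d-1))$, independent of $Z$. For part (a), any preperiodic $Z$ becomes periodic after finitely many iterations, after which $\varphi^{m}_{*}Z=d^{m}Z$ as cycles for some period $m$; therefore $h_{Fal}(\varphi^{mk}_{*}Z)=d^{mk}h_{Fal}(Z)$, and division by $d^{2mk}$ sends the limit to $0$.

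The main obstacle will be the quantitative metric comparison: verifying that the Tate-iterated metric remains continuous and yields only an $O(\deg)$ correction to Faltings' height requires care at both archimedean places (continuity of the admissible Green's current) and non-archimedean places (intersection multiplicities on arithmetic models over $\scrO_{K}$). This is precisely the content supplied by the canonical-height constructions for subvarieties of Zhang \cite{Zhang95} and Kawaguchi \cite{Kawa06}, so in practice one simply cites their framework once the polarized setup $(X,\varphi,L,d)$ is in place and extracts the normalization constants needed to match the scaling $h_{Fal}(Z)/(2[K:\Q]\deg_{\bar{K}}(Z))$.
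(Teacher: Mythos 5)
Your proof is correct and follows essentially the same route as the paper: the paper simply cites \cite[Theorem 2.4]{Zhang95} for part (a) and \cite[Theorem 2.3.1]{Kawa06} for part (b), and your argument is a faithful unpacking of what those citations deliver — the Tate limit construction of $h_{\varphi,L}$ from the normalized Faltings' height, the one-step estimate $|h_{Fal}(\varphi_* W)-d^2 h_{Fal}(W)|\le C_1\deg_{\bar K}(W)$ via the admissible adelic metric, the telescoping geometric series giving convergence and the explicit bound $c_{12}$, and vanishing on preperiodic cycles. One minor imprecision worth flagging: the identity $\varphi^m_* Z = d^m Z$ holds only once $Z$ is actually \emph{periodic}; for a merely preperiodic $Z$ one should write $\varphi^k_* Z = e\,V$ with $V$ periodic and $e=\deg(\varphi^k|_Z)$ (which equals $d^k\deg_L(Z)/\deg_L(V)$ but need not be $d^k$), and then conclude $h_{\varphi,L}(Z)=(e/d^{2k})\,h_{\varphi,L}(V)=0$ from vanishing on the periodic $V$; since the full limit was already shown to exist, arguing along a subsequence as you do is legitimate, but the pushforward multiplicity deserves to be stated correctly.
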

		 	
		Part (a) follows from \cite[Theorem 2.4]{Zhang95}, and part (b) follows from
		\cite[Theorem 2.3.1]{Kawa06}. The preperiodicity of $V$ together with Proposition 
		\ref{prop:ZhKa} yield:
		$$\frac{h_{Fal}(V')}{[K:\Q]}=\frac{h_{Fal}(V)}{[K:\Q]\deg_{\bar{K}}(V)}<2c_{12}$$
		which finishes the proof of Theorem \ref{thm:H_cap_V}.
	
	\subsection{Split Polynomial Maps Associated to Disintegrated Polynomials}
	We briefly explain why Theorem 
	\ref{dynBMZ_1} and Theorem \ref{thm:C_cap_V}
	remain valid for the dynamics of maps of the form 
	$\Phi=(f_1,\ldots,f_n):\ (\bP^1_K)^n\longrightarrow (\bP^1_K)^n$,
	where $f_1,f_2,\ldots,f_n$ are disintegrated polynomials of degrees at least 2.
	This more general case can be easily reduced to the case of
	diagonally split polynomials maps $\varphi=(f,\ldots,f)$ considered throughout
	the paper.
	
	\begin{theorem}\label{thm:finalsplit}
	Let $n\geq 2$, and let $f_1,\ldots,f_n\in K[X]$  be disintegrated polynomials 
	of degrees at least 2. Then Theorem \ref{dynBMZ_1}
	and Theorem \ref{thm:C_cap_V}
	still hold for the dynamics of the split polynomial map
	$\Phi=(f_1,\ldots,f_n)$.
	\end{theorem}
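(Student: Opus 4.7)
The plan is to run the proofs of Theorems \ref{dynBMZ_1} and \ref{thm:C_cap_V} verbatim in the split setting, with only one new observation doing all the work: when a non-constant polynomial $g$ satisfies an intertwining relation $f_j^N \circ g = g \circ f_i^N$, comparing degrees of the two compositions gives $\deg(f_j^N)\deg(g) = \deg(g)\deg(f_i^N)$, which forces $\deg f_i = \deg f_j$. Hence the pairs $(i,j)$ for which non-trivial $\Phi$-periodic hypersurfaces can relate the coordinates $x_i$ and $x_j$ are precisely those with $\deg f_i = \deg f_j$, and for such pairs the local analysis is effectively the diagonal one.

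The first step is to invoke the Medvedev-Scanlon classification for split polynomial maps with distinct factors (present in their paper \cite{MedSca}): every irreducible $\Phi$-periodic subvariety of $(\bP^1_{\bar K})^n$ is cut out by a collection of equations of type (A) $x_i = \zeta$ with $\zeta$ periodic for $f_i$, or of type (B) $x_j = g(x_i)$ with $f_j^N \circ g = g \circ f_i^N$ for some $N\geq 1$. By the degree observation, every equation of type (B) with $\deg g \geq 1$ forces $\deg f_i = \deg f_j$. Consequently each $\Phi$-periodic hypersurface is either of the form $\pi_i^{-1}(\{\zeta\})$ for $\zeta$ periodic under $f_i$, or it is the pullback, under some pair-projection $\pi_{ij}\colon (\bP^1)^n \to (\bP^1)^2$, of an $(f_i,f_j)$-periodic curve in the case $\deg f_i = \deg f_j$.

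For the analog of Theorem \ref{dynBMZ_1}, exactly as in the diagonal case, it suffices to bound a single coordinate of each intersection point, and the classification reduces us to the case $n=2$ with $\Phi = (f_1,f_2)$. The trivial periodic curves $x_1 = \zeta$ and $x_2 = \zeta$ contribute only uniformly bounded heights. For a periodic curve $V$ given by $x_2 = g(x_1)$ we have $d := \deg f_1 = \deg f_2$, and a direct canonical-height computation using the intertwining relation (the analog of Lemma \ref{same_cano}) yields
\begin{equation*}
\hat h_{f_2}(g(\alpha)) = \deg(g)\cdot \hat h_{f_1}(\alpha).
\end{equation*}
The proof of Lemma \ref{ineq_C} requires no change, as it depends only on the Weil height on $(\bP^1)^2$, and substituting the displayed identity into the resulting inequality gives the same inequality (\ref{main_ineq}) with $\hat h$ replaced by $\hat h_{f_1}$; this bounds $\hat h_{f_1}(\alpha)$ as soon as $\deg g$ is sufficiently large. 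The remaining small-degree $g$'s form a finite list by the obvious analog of Proposition \ref{prop:all_g}, completing the bounded height argument.

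For the analog of Theorem \ref{thm:C_cap_V}, after the same reduction to $n=2$ with $\deg f_1 = \deg f_2 = d$, take $L = \scrO(1,1)$ on $X = (\bP^1)^2$. Then $\Phi^* L = \scrO(d_1,d_2) = \scrO(d,d) = L^{\otimes d}$, so $(X,\Phi,L)$ is a polarized dynamical system to which Theorem \ref{thm:H_cap_V} applies directly, yielding the bounded average height of intersections with any periodic curve $H \subset X$. The main obstacle, which is essentially bookkeeping, is to make the reduction to $n=2$ precise in the split setting and to extract the Medvedev-Scanlon classification with distinct factors in the form stated above; once these are in hand every estimate transfers without modification, which is why the author describes the reduction to the diagonal case as easy.
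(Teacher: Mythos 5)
Your approach is genuinely different from the paper's, and it is a reasonable alternative, but it omits the reduction that does the real work in the paper. The paper does not re-run the estimates in the split setting. Instead, once an $(f_1,f_2)$-periodic curve not of the form $\{\zeta\}\times\bP^1$ or $\bP^1\times\{\zeta\}$ is assumed to exist, it invokes \cite[Proposition 2.34]{MedSca} to produce polynomials $p_1,p_2,q$ with $f_1\circ p_1=p_1\circ q$ and $f_2\circ p_2=p_2\circ q$, so that $(p_1,p_2)$ intertwines the diagonal map $(q,q)$ with $\Phi=(f_1,f_2)$; after noting (via Ritt's theory) that $q$ is again disintegrated, it pulls back preperiodic curves under $(p_1,p_2)$ to $(q,q)$-preperiodic curves and quotes the already-proved diagonal case. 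No degree comparison is needed. Your route — observing $\deg f_i=\deg f_j$ from a nontrivial intertwining, then arguing directly with $\hat h_{f_1},\hat h_{f_2}$ and feeding $\scrO(M,N)$ into Theorem~\ref{thm:H_cap_V} (you wrote $\scrO(1,1)$, but $L$ must be the bundle associated to $H=C$, namely $\scrO(M,N)$; the conclusion $\Phi^*L\cong L^{\otimes d}$ is unaffected) — is elegant and makes explicit a fact the paper leaves implicit.

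However, two of your steps lean on claims that are not as automatic as you suggest, and the paper's reduction is precisely what makes them painless. First, you invoke ``the Medvedev–Scanlon classification for split polynomial maps with distinct factors'' in the form: every $\Phi$-periodic hypersurface, after projecting to a pair-factor, is $x_j=g(x_i)$ with $f_j^N\circ g=g\circ f_i^N$. The paper only cites the coarser statement (every preperiodic hypersurface is pulled back from some $(f_i,f_j)$-preperiodic curve in $(\bP^1)^2$); the finer graph form follows, but only after the $(p_1,p_2)$-$(q,q)$ pullback plus Theorem~\ref{inv_curves}, which you skip. Second, and more substantively, your ``obvious analog of Proposition~\ref{prop:all_g}'' — finitely many $g$ of bounded degree satisfying $f_2^N\circ g=g\circ f_1^N$ for some $N$ — is not obvious on its face. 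Proposition~\ref{prop:all_g} is proved for commuters of a single $f$ via the $\tilde f^m\circ L$ normal form, and that proof does not transfer verbatim to intertwiners of two distinct maps. The clean justification is exactly the reduction you are trying to avoid: pull $\{y=g(x)\}$ back under $(p_1,p_2)$, identify a $(q,q)$-periodic component $\{y'=\psi(x')\}$ with $\psi$ commuting with an iterate of $q$, apply Proposition~\ref{prop:all_g} to $q$, and recover finiteness of the $g$'s from finiteness of the $\psi$'s. So your proof is morally correct, but filling the two gaps above essentially reinstates the commutative-diagram reduction the paper uses.
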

	
	In fact, Medvedev and Scanlon (see Proposition 2.21 and 
	Fact 2.25 in \cite{MedSca}) prove that every irreducible $\Phi$-preperiodic
	hypersurface of $(\bP^1_K)^n$
	has the form $\pi_{ij}^{-1}(Z)$
	where $1\leq i<j\leq n$, $\pi_{ij}$
	is the projection onto the $(i,j)$-factor
	$(\bP^1)^2$ and $Z$ is an $(f_i,f_j)$-preperiodic
	curve in $(\bP^1_K)^2$. Therefore we can reduce to the case $n=2$.
	If every periodic curve of $(\bP^1)^2$ under $(f_1,f_2)$
	has the form $\zeta\times \bP^1$
	or $\bP^1\times \zeta$ then we are done. If there is a preperiodic curve
	that does not have such forms, by \cite[Proposition 2.34]{MedSca} there exist 
	polynomials $p_1$, $p_2$
	and $q$ such that $f_1\circ p_1=p_1\circ q$, and 
	$f_2\circ p_2=p_2\circ q$. In other words,
	we have the commutative diagram:
	$$\begin{diagram}
				 		\node{(\bP^1)^2}\arrow{s,l}{(p_1,p_2)}\arrow{e,t}{(q,q)}
				 		\node{(\bP^1)^2}\arrow{s,r}{(p_1,p_2)}\\
				 		\node{(\bP^1)^2}\arrow{e,b}{\Phi}\node{(\bP^1)^n}
				 \end{diagram}$$
	
	For every $\Phi$-preperiodic curve $V$ in $(\bP^1)^2$,
	we have that every irreducible component of $(p_1,p_2)^{-1}(V)$
	is $(q,q)$-preperiodic. More over, if $V$ is $\Phi$-periodic,
	at least one irreducible component of $(p_1,p_2)^{-1}(V)$
	is $(q,q)$-periodic. Furthermore, it is a consequence of Ritt's theory of polynomial
	decomposition that if $f_1\circ p_1=p_1\circ q$, and $f_1$ is disintegrated
	then $q$ is also disintegrated. Hence we can reduce to the case of diagonally
	split polynomial maps treated earlier.

	\bibliographystyle{amsalpha}
	\bibliography{Split_v16} 	

\end{document}